\newif\ifcolorcomments
\newcommand{\allowcomments}[4]{
\newcommand{#1}[1]{\ifdraft{\ifcolorcomments{\textcolor{#4}{##1 --#3}}\else{\textsl{ ##1 \ --#3}}\fi}\else{}\fi}
}
\newtheorem{theorem}{Theorem}[section]
\newtheorem{lemma}[theorem]{Lemma}
\newtheorem{proposition}[theorem]{Proposition}
\theoremstyle{definition}
\newtheorem{remark}[theorem]{Remark}
\newcommand{\EE}{\mathcal E}
\newcommand{\HH}{\mathcal H}
\newcommand{\N}{\mathbb N}
\newcommand{\Z}{\mathbb Z}
\newcommand{\mbf}{\mathbf}
\newcommand{\0}{\mbf 0}
\renewcommand{\text}{\textup}
\newcommand{\NPC}[1]{\ignorespaces}
\DeclareMathOperator{\Var}{Var}
\newif\ifdraft\drafttrue
\def\N{\mathbb N}
\def\Z{\mathbb Z}
\renewcommand{\emptyset}{{\diameter}}
\newcommand*{\myDots}{\ifmmode\mathellipsis\else.\kern-0.07em.\kern-0.07em.\fi}
\DeclarePairedDelimiter\floor{\lfloor}{\rfloor}
\allowcomments{\commumtaz}{MH}{Mumtaz}{green}
\allowcomments{\comnikita}{NS}{Nikita}{blue}
\allowcomments{\combixuan}{BL}{Bixuan}{red}
\newcommand {\ignore}[1] {}
\newcommand{\commenty}[1]{}
\begin{document}

\title[Metrical properties of product of partial quotients with geometric mean]{Metrical properties of the product of partial quotients with geometric mean in continued fractions}

\author[Mumtaz Hussain]{Mumtaz Hussain}
\address{Mumtaz Hussain,  Department of Mathematical and Physical Sciences,  La Trobe University, Bendigo 3552, Australia. }
\email{m.hussain@latrobe.edu.au}

\author[Bixuan Li]{Bixuan Li}
\address{Bixuan Li, School of Mathematics and Statistics, Huazhong University of Science and Technology, Wuhan 430074, P. R. China}
\email{libixuan@hust.edu.cn}

\author{Nikita Shulga}
\address{Nikita Shulga,  Department of Mathematical and Physical Sciences,  La Trobe University, Bendigo 3552, Australia. }
\email{n.shulga@latrobe.edu.au}
\date{}

\maketitle

\numberwithin{equation}{section}

\begin{abstract}
The theory of uniform Diophantine approximation concerns the study of Dirichlet improvable numbers and the metrical aspect of this theory leads to the study of the product of consecutive partial quotients in continued fractions. It is known that the dimension of the set of Dirichlet non-improvable numbers depends upon the number of partial quotients in the product string. However, one can see that the Hausdorff dimension is the same for any number of consecutive partial quotients with a constant gap.  This paper is aimed at a detailed analysis on how the Hausdorff dimension changes when there is a linear gap in indices and the number of partial quotients in the product grows. More precisely, let $d\in \N_{\ge 1}, t\in\Z_{\geq 0}$ and $f(n)=dn+t$, we present the detailed Hausdorff dimension analysis  of the set\begin{equation*}
E_{f}(\psi):=\left\{x\in [0, 1): \sqrt[n]{a_{f(n)}(x)a_{2f(n)}(x)\cdots a_{nf(n)}(x)}\geq \psi(n) \ {\rm for \ infinitely \ many} \ n\in \N\right\}.
\end{equation*}
It is seen that the dimension is larger if $d$ is larger and $t$ has no contribution to the dimension.

%

\end{abstract}

\section{Introduction}

It is well-known that every irrational number $x\in (0, 1)$ has a unique infinite continued fraction expansion. This expansion can be induced from the Gauss map  $T: [0,1)\to [0,1)$ defined by
\[T(0)=0, ~ T(x)=\frac{1}{x}-\floor*{\frac{1}{x}} \textmd{ for }x\in(0,1),\]
where $\lfloor x\rfloor$ denotes the integer part of $x$.  The continued fraction of $x$ is denoted as $x:=[a_{1}(x),a_{2}(x),a_{3}(x),\ldots ]$, with $a_1(x)=\lfloor 1/x \rfloor$ and $a_{n}(x)= a_1(T^{n-1}(x))$ for $n\ge2$, referred to as the partial quotients of $x$.


The starting point of metric Diophantine approximation is Dirichlet theorem, that is the first non-trivial quantitative result. Landmark results of this field are Khintchine's \cite{Khinchin_book} and Jarn\'ik's theorem with Lebesgue and Hausdorff measure. Both of them proved via continued fractions that is closely intertwined with the growth of the large partial quotients.

The classical Borel-Bernstein theorem (1912) states that the Lebesgue measure of the set
\begin{equation*}
\EE_1(\psi):=\left\{x\in [0, 1): a_n(x)\geq \psi(n) \ {\rm for \ infinitely \ many} \ n\in \N\right\}
\end{equation*}
is either zero or full depending upon the convergence or divergence of the series $\sum_{n=1}^\infty \psi(n)^{-1}$ respectively. Here and throughout  $\psi:\N\to [1, \infty)$ will be an arbitrary positive function. Taking $\psi(n)=n$, one can conclude that for almost all $x\in[0,1)$, with respect to Lebesgue measure, $a_n(x)\geq n$ holds for infinitely many $n\in\mathbb{N}$. This implies that the law of large numbers does not hold, that is
\begin{equation}\label{LLN}
\lim_{N\to\infty}\frac{1}{N}\sum_{n<N} a_n(x)=\infty
\end{equation}
for almost all $x$.  Khintchine \cite{Khintchine1935} showed that the weak law of large numbers holds for a suitable normalising sequence, that is, $\sum_{i=1}^na_i(x)/n\log n$ converges to $1/\log 2$ with respect to Lebesgue measure. Philipp \cite{Philipp88} proved that there is no reasonably regular function $\phi:\mathbb N\to\mathbb R_+$ such that
$\sum_{i=1}^na_i(x)/\phi(n)$ almost everywhere converges to a finite nonzero constant. However, Diamond and Vaaler \cite{DiamondVaaler} showed that the strong law of large numbers with the normalising sequence $n\log n$ holds if the largest partial quotient $a_k(x)$ is trimmed from the sum. Inspired by these conclusions, we are interested in studying the growth rate of partial quotients. Firstly, as a metric result, Wang and Wu \cite{WaWu08} gave a Hausdorff dimension of the set $\EE_1(\psi)$ as follows.
\begin{theorem}[Wang-Wu, 2008]
Let $\psi :\mathbb{N}\rightarrow \mathbb [1, \infty)$. Suppose
\begin{equation}\label{psilim}\log B=\liminf\limits_{n\rightarrow \infty }\frac{\log
\psi (n)}{n} \quad { and}\quad \log b=\liminf\limits_{n\rightarrow \infty }\frac{\log
\log \psi (n)}{n}.
\end{equation}
Then
\begin{equation*}
\dim_\HH \EE_1(\psi) =\left\{
\begin{array}{ll}
\inf\Big\{s:\mathsf{P}\left(-s\log\left|T'\right|-s\log B\right)\le 0\Big\}  & {\rm if}\ \ 1\leq B<\infty; \\ [3ex]
\frac{1}{1+b} & {\rm if} \ \  B=\infty.
\end{array}
\right.
\end{equation*}
\end{theorem}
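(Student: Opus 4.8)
\emph{Setup.} The plan is to regard $\EE_1(\psi)=\bigcap_{N\ge1}\bigcup_{n\ge N}E_n$ as the $\limsup$ of $E_n:=\{x\in[0,1):a_n(x)\ge\psi(n)\}$ and to bound $\dim_\HH\EE_1(\psi)$ above and below. I will use the standard metric facts: the cylinder $I_n(a_1,\dots,a_n)=\{x:a_i(x)=a_i\ (1\le i\le n)\}$ has $|I_n|\asymp q_n^{-2}$ and $|(T^n)'|\asymp q_n^2$ on it; for $m\ge1$ the set $\bigcup_{a_n\ge m}I_n(a_1,\dots,a_{n-1},a_n)$ is an interval $J_n$ of length $\asymp(q_{n-1}^2m)^{-1}$ inside $I_{n-1}(a_1,\dots,a_{n-1})$; and $\frac1n\log\sum_{(a_1,\dots,a_n)}q_n^{-2s}\to\mathsf{P}(-s\log|T'|)=:P(s)$, with $P$ finite and strictly decreasing on $(\tfrac12,1]$, $P(1)=0$ and $P(s)\to\infty$ as $s\downarrow\tfrac12$. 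Since $\mathsf{P}(-s\log|T'|-s\log B)=P(s)-s\log B$ for $1\le B<\infty$, its zero $s_0$ lies in $(\tfrac12,1]$, and $s_0\downarrow\tfrac12$ as $B\uparrow\infty$, matching the transition to the second regime.

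\emph{Upper bound.} Covering $\bigcup_{n\ge N}E_n$ by the intervals $J_n$ (with $m=\lceil\psi(n)\rceil$) gives $\HH^s(\EE_1(\psi))\le\liminf_N\sum_{n\ge N}\psi(n)^{-s}\sum_{(a_1,\dots,a_{n-1})}q_{n-1}^{-2s}$. If $1\le B<\infty$ and $s>s_0$, then $\psi(n)\ge(Be^{-\varepsilon})^n$ eventually, so the $n$-th term is $\le e^{n(P(s)-s\log B+s\varepsilon+o(1))}$ and decays geometrically once $\varepsilon$ is small; thus $\HH^s(\EE_1(\psi))=0$ and $\dim_\HH\EE_1(\psi)\le s_0$. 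If $B=\infty$, this particular cover only yields $\le\tfrac12$ (which already equals $\tfrac1{1+b}$ when $b=1$), so for $b>1$ I would cover each $E_n$ more carefully: since $a_n(x)\ge\psi(n)$ forces $|x-p_{n-1}/q_{n-1}|\le(q_{n-1}^2\psi(n))^{-1}$, for a threshold $Q$ one covers the part of $E_n$ with $q_{n-1}\le Q$ by balls $B(p/q,(q^2\psi(n))^{-1})$ over reduced fractions $p/q$ with $q\le Q$ (total $s$-volume $\ll\psi(n)^{-s}Q^{2-2s}$), and the part with $q_{n-1}>Q$ by intervals attached to the first index at which the denominator exceeds $Q$; optimising $Q=Q(n)$ and using that $\psi(n)\ge e^{(b-\varepsilon)^n}$ is doubly exponential makes the resulting series converge for every $s>\tfrac1{1+b}$, so $\dim_\HH\EE_1(\psi)\le\tfrac1{1+b}$.

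\emph{Lower bound.} Fix $t$ just below the target value and a subsequence $(n_k)$ realising the relevant liminf, so $\psi(n_k)$ is as small as that liminf allows. For $1\le B<\infty$ I would build a Cantor subset of $\EE_1(\psi)$ by forcing $a_{n_k}(x)\in[\psi(n_k),2\psi(n_k)]$ at the (possibly sparse) levels $n_k$ and, between consecutive such levels, letting $a_j$ range over all of $\N$ distributed by the equilibrium state $\mu_t$ of $-t\log|T'|$ (legitimate since $t>\tfrac12$), with the natural product measure $\mu$. Each free block contributes $\mu(I_n)\lesssim|I_n|^t e^{-\ell P(t)}$, with $\ell$ the number of free levels among $1,\dots,n$, and this cumulative slack $e^{-\ell P(t)}$, combined with $P(t)>t\log B$ for $t<s_0$, is exactly what absorbs the huge partial quotient inserted at the next level $n_k$; after checking $\mu(B(x,r))\ll r^{t}$ at all scales — the delicate ones being between $|I_{n_k}|$ and $|I_{n_k-1}|$, where the single large quotient spreads its $\asymp\psi(n_k)$ children over a subinterval of $I_{n_k-1}$ (here the bounded window is essential) — the mass distribution principle gives $\dim_\HH\EE_1(\psi)\ge s_0$. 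For $B=\infty$ a doubly-exponential jump cannot be absorbed by only exponential slack, so I would instead take $a_j(x)\in[\phi(j),2\phi(j)]$ at \emph{every} level $j$, with $\phi(j)=\lceil e^{(b+\varepsilon)^j}\rceil$: this lies in $\EE_1(\psi)$ because $\phi(n_k)\ge\psi(n_k)$ eventually along $(n_k)$, while the choices at the remaining levels are irrelevant to membership. Weighting $a_j$ proportionally to $a_j^{-2t}$, the critical-scale estimate reduces to $\phi(N+1)^t\prod_{j\le N}\phi(j)^{2t-1}\lesssim1$, which (since $2t-1<0$ and $\phi$ grows doubly exponentially) holds for $t<\tfrac1{1+(b+\varepsilon)}$; letting $\varepsilon\to0$ gives $\dim_\HH\EE_1(\psi)\ge\tfrac1{1+b}$.

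\emph{Main difficulty.} The routine steps are the $1\le B<\infty$ upper bound and the verification that the two lower-bound measures are honest probability measures with the stated cylinder asymptotics. The genuinely hard parts are: (i) the $B=\infty$ upper bound for $b>1$, where the thresholds must be chosen so that the tail sums of $q_{n-1}^{-2s}$ over continued-fraction denominators are controlled sharply enough to produce $\tfrac1{1+b}$ rather than the crude $\tfrac12$; and (ii) the mass distribution estimates at the non-cylinder scales around the insertion levels of both Cantor sets, where one must rule out anomalous concentration of $\mu$ in the ``fan'' of children of a single very large partial quotient — the exponent comes out correctly only through the precise bookkeeping of $\psi(n_k)$ against the slack accumulated before level $n_k$ (and, when $B=\infty$, against $\prod_{j<n_k}\phi(j)^{2t-1}$).
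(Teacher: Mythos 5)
The paper does not actually prove this statement: it is quoted from Wang--Wu \cite{WaWu08}, and even for the $B=\infty$ regime of the paper's own Theorems 1.3--1.4 the dimension $1/(1+b)$ is obtained by reduction to {\L}uczak's lemma (Lemma 5.1) rather than by a direct covering argument. Measured against the standard proof, most of your outline is sound: the $J_n$-cover for the upper bound when $1\le B<\infty$; the Cantor set with sparse forced positions $a_{n_k}\asymp\psi(n_k)$ and pressure-distributed free blocks, with the critical scale correctly identified as the fundamental interval $\bigcup_{\psi(n_k)\le a\le 2\psi(n_k)}I_{n_k}$ of length $\asymp(q_{n_k-1}^2\psi(n_k))^{-1}$, whose mass bound reduces to $s\log B\le P(s)$; and the {\L}uczak-type lower bound for $B=\infty$, where your inequality $\phi(N+1)^t\prod_{j\le N}\phi(j)^{2t-1}\lesssim 1\iff t\le 1/(1+b+\varepsilon)$ is exactly the right computation. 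This is essentially the architecture the paper itself uses in Sections 3--5 for the generalized sets.

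The genuine gap is the upper bound when $B=\infty$ and $b>1$, i.e., pushing the dimension below $1/2$. Your cover of the part of $E_n$ with $q_{n-1}>Q$ by ``intervals attached to the first index at which the denominator exceeds $Q$'' cannot work: the cylinders $I_j(a_1,\ldots,a_j)$ with $j$ minimal such that $q_j>Q$ are pairwise disjoint and cover all irrationals of $[0,1)$, so by subadditivity of $x\mapsto x^s$ their total $s$-volume is at least $\bigl(\sum_i|I_i|\bigr)^s=1$ for every $s\le 1$; no choice of $Q(n)$ makes this summable, and the hypothesis $a_n\ge\psi(n)$ is never used on that part. This is precisely the obstruction to going below $1/2$: since $\sum_{(a_1,\ldots,a_{n-1})}q_{n-1}^{-2s}=\infty$ for $s\le 1/2$, no cover indexed by all full cylinders of any fixed level can succeed. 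The correct mechanism is that $q_{n-1}>\psi(n)^{\delta}$ forces, by pigeonhole on $q_{n-1}\le\prod_{i<n}(a_i+1)$, some earlier partial quotient $a_j\ge\psi(n)^{\delta/n}/2$, which is still doubly exponential; one must iterate this dichotomy (small denominator versus huge earlier partial quotient) down to a level where the rational approximant has controllably small denominator, and only then does the ball count $Q^{2-2s}\psi^{-s}$ close up at the exponent $1/(1+b)$. That iteration is the content of {\L}uczak's proof and is absent from your sketch; the pragmatic repair is to do what both Wang--Wu and this paper do, namely quote {\L}uczak's lemma and use the inclusion $\EE_1(\psi)\subseteq\{x: a_n(x)\ge e^{(b-\varepsilon)^n}\ \text{for infinitely many } n\}$.
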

Here and throughout $T^{\prime }$ denotes the derivative of the Gauss map $T$, and $\mathsf{P}$ represents the pressure function defined in the Subsection \ref{Pressure Functions}.

The product of two consecutive partial quotients also plays a significant role in Diophantine approximation. This was first pointed out by Kleinbock and Wadleigh in \cite{KleinbockWadleigh}, they showed that the set of Dirichlet non-improvable numbers can be written in terms of the growth of product of consecutive partial quotients.  Moreover, they proved the Lebesgue measure of set
\begin{equation*}
\EE_2(\psi):=\left\{x\in [0, 1): a_n(x)a_{n+1}(x)\geq \psi(n) \ {\rm for \ infinitely \ many} \ n\in \N\right\}
\end{equation*}
is either full or null depending upon the divergence or convergence of the series $\sum_n\frac{-\log(1-n\psi(n))(1-n\psi(n))}{n}$ respectively. The divergence of (\ref{LLN}) induces the divergence of average of product of partial quotients, that is $$\lim_{N\to\infty}\frac{1}{N}\sum_{n<N} a_n(x)a_{n+1}(x)=\infty$$ for almost all $x$. Hu, Hussain and Yu \cite{HHY} established the weak and strong laws of large numbers with a suitable sequence $n\log^2 n$ for the sums of product of partial quotients. In particular, they
showed that $\sum_{i=1}^na_i(x)a_{i+1}(x)/n\log^2 n$ and $\sum_{i=1}^na_i(x)a_{i+1}(x)-\max_{1\le i\le n}a_i(x)a_{i+1}(x)/n\log^2 n$ converges to $1/(2\log 2)$ with respect to Lebesgue measure.
 A result of Aaronson \cite{Aaronson} states that there is no normalisation $b_n$ such that $\sum_{i=1}^{n}a_i(x)a_{i+1}(x)/b_n$ converges to a nonzero number for almost all $x$. These inspire us to study the growth rate of $a_n(x)a_{n+1}(x)$. With respect to the metric results of product of partial quotients, Hussain, Kleinbock, Wadleigh, and Wang \cite{HKWW} studied the Hausdorff measure of the set $\EE_2(\psi)$, see also \cite{BHS, KSY}.  Huang, Wu and Xu \cite{HuWuXu} calculated the Lebesgue measure and Hausdorff dimension for a natural generalisation of the set $\EE_2(\psi)$.  Namely, for any $m\ge 1$, they considered
\begin{equation*}
\EE_m(\psi):=\left\{x\in [0, 1): a_{n+1}(x)a_{n+2}(x)\cdots a_{n+m}(x)\geq \psi(n) \ {\rm for \ infinitely \ many} \ n\in \N\right\}
\end{equation*}
and proved the following result.

\begin{theorem}[Huang-Wu-Xu, 2020]\label{HWXtheorem}
Let $\psi$ be a positive function and $B$, $b$ are {defined} as in \eqref{psilim}. Then
\begin{equation*}
\dim_\HH \EE_m(\psi) =\left\{
\begin{array}{ll}
\inf \{ s: \mathsf{P}(T, -f_m(s) \log B -s \log | T^{\prime} | ) \leq 0 \} & {\rm if}\ \ 1\leq B<\infty; \\ [3ex]
\frac{1}{1+b} & {\rm if} \ \  B=\infty,
\end{array}
\right.
\end{equation*}
where $f_m(s)$ is given by the following iterative formula:
$$
f_1(s)=s, \,\,\,\,\, f_{k+1}(s) = \frac{s f_k (s)}{1-s+f_k(s)}  \text{ for }k \geq 1.
$$
\end{theorem}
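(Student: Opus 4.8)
The plan is to run the now-standard two-regime argument. When $B=\infty$ one sandwiches: $\EE_m(\psi)$ contains $\{x:a_{n+1}(x)\ge\psi(n)\text{ for infinitely many }n\}$, and since $a_{n+1}(x)\cdots a_{n+m}(x)\ge\psi(n)$ forces $\max_{1\le i\le m}a_{n+i}(x)\ge\psi(n)^{1/m}$, it is contained in $\bigcup_{i=1}^m\{x:a_{n+i}(x)\ge\psi(n)^{1/m}\text{ for infinitely many }n\}$ (pigeonhole over $i$). Neither a bounded index shift nor replacing $\psi$ by $\psi^{1/m}$ changes $\liminf_n n^{-1}\log\log\psi(n)=\log b$, so the Wang--Wu theorem assigns dimension $\tfrac1{1+b}$ to each of the finitely many sets on both sides, and countable stability of $\dim_\HH$ gives $\dim_\HH\EE_m(\psi)=\tfrac1{1+b}$.

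Now assume $1\le B<\infty$ and set $s_0=\inf\{s:\mathsf P(T,-f_m(s)\log B-s\log|T'|)\le0\}$. Since $\mathsf P(T,\phi-c)=\mathsf P(T,\phi)-c$ for constants $c$, this $s_0$ is the value of $s$ at which $\mathsf P(T,-s\log|T'|)=f_m(s)\log B$; for $B>1$ the left side is strictly decreasing and (using that $f_m$ is increasing with $f_m(1)=1$, while $\mathsf P(T,-\log|T'|)=0$ by Bowen's formula) the right side is strictly increasing on $(\tfrac12,1)$, so $s_0\in(\tfrac12,1)$ and $\mathsf P(T,-s\log|T'|)<f_m(s)\log B$ for $s>s_0$; if $B=1$ then $s_0=1$ and the upper bound below is vacuous. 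For the upper bound fix $s>s_0$. The engine is a covering estimate proved by induction on $m$: for every cylinder $I_\ell(a_1,\dots,a_\ell)$ and every $A\ge1$, the set $\{x\in I_\ell:a_{\ell+1}(x)\cdots a_{\ell+m}(x)\ge A\}$ can be covered by sets $U_j$ with $\sum_j|U_j|^s\lesssim_{m,s}|I_\ell|^sA^{-f_m(s)}$. The base case $m=1$ uses only that $\{x\in I_\ell:a_{\ell+1}(x)\ge A\}$ is an interval of length $\asymp|I_\ell|/A$ together with $f_1(s)=s$; the inductive step peels off $a_{\ell+1}$, groups its values into dyadic blocks $a_{\ell+1}\in[R,2R)$, and covers each block either by the single interval it spans (length $\asymp|I_\ell|/R$) or by covering the $\asymp R$ subcylinders $I_{\ell+1}(\dots,a_{\ell+1})$ via the inductive bound with residual target $A/R$ for the remaining $m-1$ digits, whichever is cheaper --- and optimizing the cross-over in $R$ is precisely the computation that turns $f_k(s)$ into $f_{k+1}(s)=sf_k(s)/(1-s+f_k(s))$, which is where the recursion in the statement originates. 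Granting the estimate, since $\liminf_n n^{-1}\log\psi(n)=\log B$ gives $\psi(n)\ge(Be^{-\varepsilon})^n$ for all large $n$, summing the covering bound over all cylinders of order $n$ and then over $n$ bounds the $s$-dimensional Hausdorff content of $\EE_m(\psi)$ by $\lesssim\sum_n e^{n(\mathsf P(T,-s\log|T'|)+o(1))}(Be^{-\varepsilon})^{-nf_m(s)}$, which converges for $\varepsilon$ small; hence $\HH^s(\EE_m(\psi))=0$ and $\dim_\HH\EE_m(\psi)\le s_0$.

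For the matching lower bound, fix $s<s_0$ and build a Cantor subset $K\subseteq\EE_m(\psi)$ carrying a measure $\mu$. Pick a rapidly growing sequence $(n_k)$ along which $n_k^{-1}\log\psi(n_k)\to\log B$ and a truncation level $M\to\infty$: on the free stretches of indices let the digits run over $\{1,\dots,M\}$ with the Gibbs measure of the potential $-s\log|T'|$ for that subsystem (whose pressure increases to $\mathsf P(T,-s\log|T'|)$ as $M\to\infty$), and inside each index block $(n_k,n_k+m]$ force $a_{n_k+1},\dots,a_{n_k+m}$ into prescribed dyadic windows whose sizes realize the optimal split uncovered in the covering estimate, so that $a_{n_k+1}(x)\cdots a_{n_k+m}(x)\ge\psi(n_k)$ for every $k$, distributing $\mu$ over these windows accordingly. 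A mass distribution principle argument --- comparing $\mu(I_\ell)$ with $|I_\ell|^s$ at every order $\ell$, the critical orders being those just after a forced block --- then gives $\dim_\HH K\ge s$ up to an error controlled by $M$ and the growth rate of $(n_k)$; sending $M\to\infty$ and $s\uparrow s_0$ completes the proof. (Alternatively, the recursion can be obtained by an induction on $m$ that combines the optimal window for the first forced digit with an $\EE_{m-1}$-type set of reduced target, via a fibering lower bound for Hausdorff dimension.)

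The crux is this lower bound, and within it the calibration of $\mu$ on the forced blocks: spreading the mass uniformly over a window of size $\asymp\psi(n_k)$ over-concentrates $\mu$ near the forced cylinders and breaks $\mu(I_\ell)\lesssim|I_\ell|^s$, so one must choose the $m$ window sizes (which, dictated by the nested optimization behind $f_m$, have genuinely different orders of magnitude) and the mass apportioned to each so that the local dimension at every scale stays at least $s_0-o(1)$; arranging $(n_k)$ to grow fast enough that the forced blocks do not drag the local dimension below the target, and handling the $\liminf$ defining $B$ only along the construction sequence, are the delicate points. The upper bound, once the covering lemma is in place, is comparatively routine bookkeeping, and the emergence of the recursion $f_{k+1}(s)=sf_k(s)/(1-s+f_k(s))$ from balancing the two covering strategies is the conceptual core of the argument.
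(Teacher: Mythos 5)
This theorem is quoted in the paper from Huang--Wu--Xu \cite{HuWuXu} without proof, so there is no in-paper argument to compare against line by line; however, your plan is sound and is essentially the same strategy the paper itself adapts in Sections 3--5 for its main theorem (and the strategy of the original source): the Wang--Wu sandwich via pigeonhole for $B=\infty$, a two-strategy covering optimization whose cross-over generates the recursion $f_{k+1}(s)=sf_k(s)/(1-s+f_k(s))$ for the upper bound, and a sparse Cantor subset with calibrated dyadic windows plus the mass distribution principle for the lower bound. You also correctly identify the genuinely delicate points (the window calibration and working along the $\liminf$-realizing subsequence), so the proposal is a faithful, correct outline of the standard proof.
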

The main idea of estimating the lower bound of the dimension of this set is constructing a suitable subset by analysing the potential coverings. Then, defining a probability measure and using mass distribution principle to estimate the lower bound is a standard process. The specific value of partial quotients in subset grows with an exponential rate. It is worth noting that, the indices of the block $a_{n+1}(x)a_{n+2}(x)\cdots a_{n+m}(x)$  has a constant gap among them. If we set the gap with any other constants, the partial quotients with indices in the gap does not make any contribution for the Hausdorff dimension through the analysis of potential coverings. More precisely, for any constant $c>0$ and $m\ge 1$, the Hausdorff dimension of the set
\begin{equation*}
\EE_{mc}(\psi):=\left\{x\in [0, 1): a_{n+c}(x)a_{n+2c}(x)\cdots a_{n+mc}(x)\geq \psi(n) \ {\rm for \ infinitely \ many} \ n\in \N\right\}
\end{equation*}
is equal to the Hausdorff dimension of the set $\EE_m(\psi)$, since $a_{n+ic+1},\ldots,a_{n+(i+1)c-1}$ for any $1\le i\le m-1$ in the Cantor subset construction is constant. More generally, let $\{c_{\ell}\}_{\ell=1}^{m}$ be any strictly increasing sequence of positive integers, the Cantor subset of the set
\begin{equation*}
\EE_{c_m}(\psi):=\left\{x\in [0, 1): a_{n+c_1}(x)a_{n+c_2}(x)\cdots a_{n+c_m}(x)\geq \psi(n) \ {\rm for \ infinitely \ many} \ n\in \N\right\}
\end{equation*}
is similar to the Cantor subset of $\EE_{mc}$, so that the Hausdorff dimension for both of these sets is the same.  Some results concerning the metrical theory of $\EE_m(\psi)$ can be found in \cite{BBH2, BHS, HuWuXu, HKWW, HLS, KleinbockWadleigh}.

Thus, there are two natural problems:  what kind of gap between the partial quotients can affect the dimension,  and whether the dimension changes or not if the number of partial quotients in the product string is related to $n$. Integrating these two problems together gives rise to the following set. Throughout the paper, our main focus is on a general linear gap in the indices. Let $f(n):=dn+t$ be a linear function, where $d\in\mathbb{N}_{\ge 1}$ and $t\in\mathbb{N}$. Define the set
\begin{equation*}
E_{f}(\psi):=\left\{x\in [0, 1): \sqrt[n]{a_{f(n)}(x)a_{2f(n)}(x)\cdots a_{nf(n)}(x)}\geq \psi(n) \ {\rm for \ infinitely \ many} \ n\in \N\right\}.
\end{equation*}
The main result of the paper is as follows.

\begin{theorem}\label{thm1}
Let $\psi$ be a positive function defined as
\begin{equation}\label{eqnpsi}\log B=\liminf_{n\rightarrow\infty}\frac{\log\psi\left(n\right)}{dn} \quad \text{ and } \quad \log b=\liminf_{n\rightarrow\infty}\frac{\log\log\psi\left(n\right)}{dn^2}.
\end{equation}
Then
\begin{equation*}
\dim_\HH E_f(\psi) =\left\{
\begin{array}{ll}
\inf\Big\{s:\mathsf{P}\left(-s\log\left|T'\right|-\left(2s-1\right)\log B\right)\le 0\Big\} & {\rm if}\ \ 1\leq B<\infty; \\ [3ex]
\frac{1}{1+b} & {\rm if} \ \  B=\infty.
\end{array}
\right.
\end{equation*}
\end{theorem}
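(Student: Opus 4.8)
The plan is to follow the now-standard two-sided scheme for Hausdorff-dimension statements about growth of partial quotients, adapting it to the two features that distinguish $E_f(\psi)$ from the sets in Theorem~\ref{HWXtheorem}: the block length $n$ grows with $n$, and the indices $f(n), 2f(n), \dots, nf(n)$ form an arithmetic progression with common difference $f(n)=dn+t$, so the gaps between consecutive used indices themselves grow linearly. As in the works of Wang--Wu and Huang--Wu--Xu, the partial quotients sitting strictly between two used indices contribute nothing to the dimension: in the Cantor subset construction they will be pinned to the constant value $1$, and in the upper bound their cylinders are simply summed out using the bounded-distortion property of the Gauss map. Thus the essential combinatorial input is a single used block $a_{f(n)}, a_{2f(n)}, \dots, a_{nf(n)}$ subject to $\big(\prod_{k=1}^n a_{kf(n)}\big)^{1/n}\ge \psi(n)$, i.e. $\prod_{k=1}^n a_{kf(n)} \ge \psi(n)^n$.

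The first step is the case $B=\infty$. Here $\psi$ grows super-exponentially (along a subsequence, $\log\log\psi(n)/(dn^2)\to\log b$), so even a single enormous partial quotient can realize the product condition. The upper bound comes from the classical fact that for $x$ with $a_m(x)$ large for infinitely many $m$, $\dim_\HH \le 1/(1+\liminf \log\log a_m / m)$ at the relevant scale; one checks that $\prod_{k\le n} a_{kf(n)}\ge\psi(n)^n$ forces $\max_k \log\log a_{kf(n)}\gtrsim \log\log\psi(n) + \log n$, while the largest used index is $nf(n)=dn^2+tn$, so the exponent becomes $\log\log\psi(n)/(dn^2)\to\log b$, giving $\dim_\HH E_f(\psi)\le 1/(1+b)$. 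The lower bound is obtained by forcing exactly one huge partial quotient (say at index $nf(n)$) of size roughly $\psi(n)^n$ and all others equal to $1$, which reduces to the Good/Wang--Wu construction for $\{x: a_m(x)\ge \Phi(m)\ \text{i.o.}\}$ with $\Phi(nf(n))\approx \psi(n)^n$; the relevant $\liminf$ of $\log\log\Phi(m)/m$ along these $m$ equals $b$, so the standard mass-distribution argument yields $\dim_\HH E_f(\psi)\ge 1/(1+b)$.

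The main work is the case $1\le B<\infty$. Write $s_0$ for the claimed dimension, the unique/infimal $s$ with $\mathsf P\big(-s\log|T'| - (2s-1)\log B\big)\le 0$. For the \emph{upper bound} I would, for each large $n$, cover $E_f(\psi)$ by the union over admissible used blocks of the fundamental cylinders of rank $nf(n)$; using $|I_{m}(a_1,\dots,a_m)|\asymp \prod_{j\le m} a_j^{-2}$ and summing the unused indices freely (each such geometric sum is $O(1)$ up to distortion), the $s$-dimensional sum over one used block of length $n$ with product $\ge\psi(n)^n\approx B^{dn\cdot n}=B^{n^2 d}$ behaves, after optimizing the distribution of mass among the $n$ used slots, like a transfer-operator / pressure expression whose exponential rate is governed by $-s\log|T'|-(2s-1)\log B$; the factor $2s-1$ (rather than the $f_m(s)$ iteration of Huang--Wu--Xu) appears because the geometric-mean constraint couples all $n$ slots symmetrically with a single Lagrange multiplier, and the index $kf(n)$ scaling $\sim dn\cdot k$ contributes the extra $\log B$ in $(2s-1)\log B = s\log B + (s-1)\log B$. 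Borel--Cantelli then kills the limsup set for $s>s_0$. For the \emph{lower bound} I would build a Cantor subset $E(\{n_j\})\subset E_f(\psi)$ along a rapidly increasing sequence $n_j$: between the $(j{-}1)$st and $j$th stage all partial quotients are free to range in $\{1,\dots,M\}$ for a large fixed $M$ (so the "background" has dimension close to $s_M\to 1$ as $M\to\infty$, or more precisely close to $s_0$ once $B$ is incorporated), at the used indices $k f(n_j)$ one takes $a_{kf(n_j)}$ in a dyadic range $[A_k, 2A_k)$ with $\prod_k A_k\approx \psi(n_j)^{n_j}$ chosen to make each stage as "fat" as possible subject to the pressure equation, and all unused indices in the $j$th block are set to $1$. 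Defining the natural measure that distributes mass according to the Gauss-type weights $\prod a_j^{-2s}$ on the free coordinates and uniformly among the allowed values at used coordinates, one estimates the measure of an arbitrary ball by the usual case analysis (ball inside a cylinder of the current generation vs. straddling children), and the Hölder exponent one extracts is exactly $s_0$; letting $M\to\infty$ and the $n_j$ grow fast enough removes the error terms.

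I expect the genuine obstacle to be the \emph{exact identification of the rate function as $-s\log|T'|-(2s-1)\log B$} — i.e. showing that the Lagrange optimization over how to split the product $\prod_{k\le n}a_{kf(n)}\ge\psi(n)^n$ among $n$ growing indices collapses, in the $n\to\infty$ limit, to the clean two-parameter pressure equation with the linear coefficient $2s-1$, and that $t$ drops out entirely (it only shifts indices by a bounded-per-step amount $t$, contributing $t\sum_{k}1 = tn = o(n^2)$ to log-lengths, hence nothing to the exponential rate). Getting the matching constant in both the covering sum and the measure lower bound, uniformly in $n$, is where the care is needed; once the variational computation is pinned down, the surrounding machinery (bounded distortion, pressure-function continuity and its zero $s_0$, mass distribution principle) is routine and parallels \cite{WaWu08, HuWuXu, HKWW}.
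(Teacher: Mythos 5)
Your overall architecture (sandwiching a general $\psi$ between the model sets with exponents $B\pm\varepsilon$, a covering/pressure argument for the upper bound, a Cantor subset plus mass distribution for the lower bound, and the Wang--Wu/{\L}uczak reduction when $B=\infty$) is the same as the paper's. But there is a genuine gap in your lower-bound construction for $1\le B<\infty$, and it sits exactly at the conceptual heart of the result.

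You assert that ``the partial quotients sitting strictly between two used indices contribute nothing to the dimension'' and accordingly pin all unused indices inside the $j$th block to the constant value $1$. That heuristic is correct for the sets $\EE_{mc}(\psi)$ with a \emph{constant} gap $c$, where only $O(1)$ coordinates per stage are sacrificed; here, however, the $j$th block occupies indices from $f(n_j)$ to $n_jf(n_j)$, so it contains about $dn_j^2$ unused coordinates against only $n_j$ used ones and only about $dn_j$ ``background'' coordinates preceding the block. If the unused in-block coordinates are pinned, then at scales comparable to $|J_{n_jf(n_j)}|$ the denominator $\log|J_{n_jf(n_j)}|$ still carries the full $\Theta(dn_j^2)$ contribution of those coordinates (through $q_{n_jf(n_j)}^2$), while the measure carries no corresponding entropy; a direct computation of $\log\mu/\log|J|$ with the optimal $A_k$ then gives a local exponent of order $\log B/(2\log B + c_0)$ for an absolute constant $c_0>0$, which tends to $0$ as $B\to 1$ instead of to $1=s_B$. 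In other words, with your construction the mass distribution principle cannot yield the claimed bound. The correct construction must leave the in-gap coordinates free over $\{1,\dots,M\}$ and weight them with the Gauss-type factors $q_{f(N)}^{-2s}(w)B^{-(2s-1)dN}$ normalized by $u_N=\sum_w q_{f(N)}^{-2s}(w)B^{-(2s-1)dN}$; it is precisely these free coordinates that produce the $-s\log|T'|$ term of the pressure function in the lower bound, and it is precisely because there are $\Theta(dn^2)$ of them that the linear gap parameter $d$ affects the answer at all (only the constant shift $t$ drops out). Relatedly, your plan defers the identification of the coefficient $2s-1$ to ``a Lagrange optimization,'' but the optimization is over the sizes $A_1(n),\dots,A_{n-1}(n)$ of the used partial quotients jointly with the covers at all intermediate ranks; one needs the explicit recursion forcing $A_1(n)\le\cdots\le A_{n-1}(n)$ and $\log A_1(n)\to(2-s^{-1})d\log B$ to make the H\"older estimates close uniformly in $n$. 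The remaining parts of your plan (the stratified or even unstratified covering for the upper bound, valid since $s_B>\tfrac12$, and the $B=\infty$ reduction) are essentially sound.
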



%
%

The main part of the proof of the above theorem will be focused on the Hausdorff dimensional of the set
$$E_B:=\left\{x\in [0, 1): a_{f(n)}(x)a_{2f(n)}(x)\cdots a_{nf(n)}(x)\geq B^{dn^2} \ {\rm for \ infinitely \ many} \ n\in \N\right\}.$$ We prove the following result.
\begin{theorem}\label{thm2}For any $1\le B<\infty$, we have
$$\dim_\HH E_B=\inf\Big\{s:\mathsf{P}\left(-s\log\left|T'\right|-\left(2s-1\right)\log B\right)\le 0\Big\}.$$
\end{theorem}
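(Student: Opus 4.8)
The plan is to establish the two inequalities separately, using the continued-fraction Cantor-set machinery that is now standard for this circle of problems (Wang--Wu, Huang--Wu--Xu, Hussain--Kleinbock--Wadleigh--Wang). Write $s_B:=\inf\{s:\mathsf P(-s\log|T'|-(2s-1)\log B)\le 0\}$. Throughout I would use the basic distortion estimates for cylinders: for a cylinder $I_n=I_n(a_1,\dots,a_n)$ one has $|I_n|\asymp \prod_{i=1}^n a_i^{-2}$, and more precisely the ratios $q_n(a_1,\dots,a_n)$ satisfy the standard submultiplicativity bounds that let one compare $|I_{n+k}(a_1,\dots,a_{n+k})|$ with $|I_n(a_1,\dots,a_n)|\cdot\prod_{i=n+1}^{n+k}a_i^{-2}$ up to a factor $2^{\pm k}$.

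\textbf{Upper bound $\dim_\HH E_B\le s_B$.} For a fixed large $n$, the condition $a_{f(n)}a_{2f(n)}\cdots a_{nf(n)}\ge B^{dn^2}$ forces one of the factors, and in fact forces the product, to be large; the natural covering of the corresponding level set is obtained by fixing the partial quotients $a_1,\dots,a_{nf(n)}$ and summing the $s$-dimensional measures of the resulting cylinders $I_{nf(n)}$, then summing over $n$. The key computation is that the indices $f(n),2f(n),\dots,nf(n)$ are spaced $f(n)=dn+t$ apart, so between two ``active'' indices $jf(n)$ and $(j+1)f(n)$ there are $f(n)-1$ free partial quotients whose sum over all values contributes a full factor of the pressure-type series. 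Thus the relevant quantity to estimate is, schematically,
\[
\sum_{a_{f(n)},\dots}\ \prod_{j=1}^n a_{jf(n)}^{-2s}\Bigl(\sum_{\text{free }a}a^{-2s}\Bigr)^{(f(n)-1)n}\quad\text{subject to}\quad\prod_j a_{jf(n)}\ge B^{dn^2}.
\]
Using a Lagrange-multiplier / Cauchy--Schwarz balancing (the product constraint is optimized when all the active $a_{jf(n)}$ are roughly equal to $B^{dn}$, i.e. each about $B^{f(n)}$ up to the $t$-shift), one sees that each active index contributes a weight $B^{-(2s-1)f(n)}$ — the exponent $2s-1$ rather than $2s$ coming from the constraint, exactly as in the $\EE_2$ analysis of \cite{HKWW} — while each block of $f(n)-1$ free indices contributes the partition-function factor. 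Packaging this via the pressure function gives that the $n$-th covering sum is comparable to $\exp\bigl(nf(n)\,\mathsf P(-s\log|T'|-(2s-1)\log B)/f(n)\bigr)$-type expressions; for $s>s_B$ the pressure is negative and the series over $n$ converges, whence $\mathcal H^s(E_B)=0$. Here the fact that $t$ (the additive constant in $f$) only shifts indices and never appears in an exponent is what makes $t$ irrelevant to the dimension.

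\textbf{Lower bound $\dim_\HH E_B\ge s_B$.} This is the harder and longer half. I would build a Cantor subset $\mathbb E\subset E_B$ by prescribing, along a rapidly growing sequence $n_1<n_2<\cdots$, that at ``level'' $n_k$ the active partial quotients $a_{f(n_k)},a_{2f(n_k)},\dots,a_{n_k f(n_k)}$ each lie in a dyadic window $[M_k,2M_k)$ with $M_k\approx B^{f(n_k)}$ (so their product is $\ge B^{dn_k^2}$), while all other partial quotients range over $\{1,2,\dots,N\}$ for a large fixed truncation $N$ — or, more flexibly, range so as to realize the pressure equation $\mathsf P(-s\log|T'|-(2s-1)\log B)=0$ at the truncated level $s=s_B(N)\uparrow s_B$. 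On this set one defines the natural Bernoulli-type measure $\mu$ distributing mass uniformly (in the appropriate weighted sense) among admissible cylinders, and then one applies the mass distribution principle: one must show $\mu(I_m)\lesssim |I_m|^{s-\varepsilon}$ for every cylinder $I_m$ meeting $\mathbb E$, and also handle balls $B(x,r)$ for general small $r$ (the usual case analysis according to whether $r$ is comparable to a ``free'' cylinder length or a ``constrained'' one, and whether $m$ falls in a free stretch, just before an active index, or just after one). The main obstacle — and the genuinely new feature compared to \cite{HuWuXu} — is that here \emph{both} the number of active indices in level $n_k$ (namely $n_k$) and the sizes $M_k$ of the active partial quotients grow with $k$, and the active indices within a single level are themselves spread out by gaps $f(n_k)$ that grow with $k$; so the measure and diameter estimates must be uniform in these growing parameters. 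Concretely, the delicate point is estimating $\mu(B(x,r))$ when $r$ sits inside one of the long constrained stretches of length $f(n_k)$ containing an active index: one has to show that the ``loss'' from the active index (a factor $M_k^{-(2s-1)}$ against a diameter shrinking like $M_k^{-2}$, net gain $M_k^{2-2s}>M_k^{?}$) is compensated correctly, and that summing these local estimates over the $n_k$ active indices of a level does not accumulate an uncontrolled constant. Once the local Hölder exponent $s-\varepsilon$ is verified uniformly, letting $N\to\infty$ (hence $s_B(N)\to s_B$) and $\varepsilon\to 0$ gives $\dim_\HH E_B\ge s_B$, completing the proof.

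I expect the covering/pressure bookkeeping in the upper bound to be routine given \cite{WaWu08, HuWuXu, HKWW}, and the lower-bound measure estimate on balls straddling an active index — made uniform in the two simultaneously growing scales $n_k$ and $M_k$ — to be the step requiring the most care.
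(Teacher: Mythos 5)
Your overall architecture (upper bound by natural covers, lower bound by a Cantor subset plus the mass distribution principle) matches the paper, and your upper-bound sketch is essentially sound: the paper implements it by decomposing $E_n=\bigcup_{k}F_{n,k}$ according to the first index $k$ at which the running product exceeds $B^{dn^2}$ and invoking the counting lemma $\sum_{\prod a_{if(n)}<\phi(n)}(\prod a_{if(n)})^{-s}\ll\phi(n)^{1-s}(\log\phi(n))^{k-1}$ from \cite{HuWuXu}, which is a cleaner bookkeeping device than a Lagrange-multiplier heuristic but yields the same exponent $2s-1$.

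The lower bound, however, contains a genuine gap: you place all $n_k$ active partial quotients of level $k$ in a \emph{common} window $[M_k,2M_k)$ with $M_k\approx B^{f(n_k)}$. This equal-size choice is suboptimal, and the resulting Cantor set has Hausdorff dimension strictly smaller than $s_B$ when $B>1$. To see the failure concretely, consider the cover of your set obtained by collapsing only the \emph{first} active index: for each choice of $a_1,\dots,a_{f(n_k)-1}$ take the single interval $\bigcup_{M_k\le a_{f(n_k)}\le 2M_k}I_{f(n_k)}$, of length $\asymp q_{f(n_k)-1}^{-2}M_k^{-1}$. Its $s$-cost is $\sum q_{f(n_k)-1}^{-2s}M_k^{-s}\approx B^{(2s-1)dn_k}\cdot B^{-sdn_k}=B^{(s-1)dn_k}\to 0$ at $s=s_B<1$, so $\mathcal H^{s_B}$ of your set vanishes; equivalently, the mass distribution estimate $\mu(J_{f(n_k)-1})\lesssim|J_{f(n_k)-1}|^{s-\varepsilon}$ fails at these orders (it would require $2s-1\ge s$). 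What your set actually achieves is $\inf\{s:\mathsf P(-s\log|T'|-s\log B)\le 0\}$, with exponent $s$ in place of $2s-1$. The missing idea — and the content of the paper's Section~3 — is that the $i$-th active quotient must be taken in a window $[A_i(n)^n,2A_i(n)^n]$ with the $A_i(n)$ determined by equating all $n$ potential covering costs (Proposition~3.6): this forces the recursion $s\log A_{k+1}(n)=s\log A_1(n)+(1-s)\log A_k(n)$, hence an \emph{increasing} sequence $A_1(n)\le\cdots\le A_{n-1}(n)$ with $\log A_1(n)\to(2-s^{-1})d\log B$, and only with these unequal, index-dependent windows does the Hölder exponent $s-O(1/f(N))$ hold uniformly over all cylinder orders. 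Your discussion of balls straddling an active index is the right list of cases to check, but it cannot be completed until the windows are chosen correctly.
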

\begin{remark}
Note that if we denote $d \log B =\log \tilde{B}$, then $d$ appears in the pressure function. More precisely, the potential function then would be  $-s\log\left|T'\right|-d^{-1}\left(2s-1\right)\log \tilde{B}$. It can be seen that the dimension is larger if $d$ is larger. In other words, a linear gap in indices of $n$ partial quotients affects the dimension. As we analyzed above, $t$ which is a constant gap does not make any contributions for the Hausdorff dimension. Substituting $d\log b=\log \tilde{b}$, dimension result is transformed into $(1+\tilde{b}^{ d^{-1}})^{-1}$. The conclusion for a larger dimension with a larger linear gap is clearer.
\end{remark}
\subsection{Modus Operandi}The most challenging part of this proof is how to find a suitable subset of $E_B$ for the case $1\leq B<\infty$. For all other cases, the proofs are relatively easier.  In proving the lower bound of Theorem \ref{thm2}, we use the classical method, that is
\begin{itemize}
\item define a good subset of $E_B$; 
\item define a probability measure supported on a suitable subset (Cantor subset) of $E_f(\psi)$;
\item Calculate the H\"older exponent for the probability measure;
\item Use the mass distribution principle to calculate the lower bound of the Hausdorff dimension.
\end{itemize}

Now regarding the second question on whether the dimension changes if the number of partial quotients in the product string {depends} on the index `$n$',  when dealing with a growing number of product of partial quotients as in the set $E_B$, compared with the fixed length of partial quotients as in the sets $\EE_m(\psi)$, $\EE_{mc}(\psi)$, or $\EE_{c_m}(\psi)$, the structure of the set $E_B$ is more complicated with infinite number of variations. More precisely, partial quotients in Cantor subset of \cite{HuWuXu} are of exponential growth, but not in $E_f(\psi)$. The partial quotients $a_{f(n)}$ shows an approximate exponential growth when $n$ is large enough. So that, it is imperative to use this approximate property to define a probability measure supported on a Cantor subset. Then we use the method of mass distribution principle to obtain the lower bound.  More details can be seen in Section 4.

\begin{remark}
It is worth remarking on the well-known  Khintchine's constant (1934)  that states that the  $\lim_{n\to\infty}(a_1\cdots a_n)^{\frac1n}\to 2.685452001\ldots$, independent of $x$, except on a set of Lebesgue measure zero. It is not known yet that whether this constant is irrational or not. This raises a natural, but fundamental question whether the geometric mean of the mixed partial quotients $\lim_{n\to\infty}\left(\prod_{i=1}^{n}a_{if(n)}(x)\right)^{\frac1{n}}$ converges or not, if it does, what is its limit?

\end{remark}

The paper is organised as follows. In Section 2, we introduce some basic properties of continued fractions and some basic facts that we will used later. In Section 3, we will give a sketch of idea of construction of Cantor subset. In Section 4 and 5, we give the proof of Theorem \ref{thm1} and Theorem \ref{thm2} to analyze Hausdorff dimension.

\noindent{\bf Notation.} Throughout this paper, $\mathcal{H}^{s}$ denotes the $s$-dimensional Hausdorff measure, $\dim_\HH$ the Hausdorff dimension and `cl' the closure of a set. We use $a\ll b$ and $c\gg d$ to respectively mean that $0<a/b\le e_{1}$ and $c/d\ge e_{2}>0$ for unspecified constants $e_{1},e_{2}$. Derived from the idea of equal sign, we use the following definition that $m\asymp n$ means $0<m/n\le e_{1}$ and $n/m\ge e_{2}>0$ for unspecified constants $e_{1},e_{2}$.

\medskip

\noindent{\bf Acknowledgements} The research of Mumtaz Hussain and Nikita Shulga is supported by the Australian
Research Council Discovery Project (200100994). Most of this work was carried during Mumtaz and Nikita's stay at the Sydney Mathematical Research Institute (SMRI), we thank their hospitality.  We thank Professors Lingmin Liao and Baowei Wang for useful discussions.

\section{Preliminaries}\label{sec2}
In this section, we will introduce some concepts and properties of continued fractions and pressure function.
\subsection{Continued fractions}
Recall that the $n$th convergents of $x$ are defined as
\begin{align*}
\frac{p_{n}\left(x\right)}{q_{n}\left(x\right)}=\left[a_{1}\left(x\right),\ldots,a_{n}\left(x\right)\right].
\end{align*}
For simplicity, we write $p_{n}\left(a_{1},\ldots,a_{n}\right)=p_{n}$, $q_{n}\left(a_{1},\ldots,a_{n}\right)=q_{n}$ when there is no ambiguity. The rule for constructing these sequences are as follows, for any $k\ge 2$ we have
\begin{equation}\label{rule}
\begin{split}
p_{k}&=a_{k}p_{k-1}+p_{k-2},\\
q_{k}&=a_{k}q_{k-1}+q_{k-2},
\end{split}
\end{equation}
where $p_{0}=0$, $q_{0}=1$.

For any $\left(a_{1},\ldots,a_{n}\right)$ where $a_{i}\in\mathbb{N}$ and $1\le i\le n$, define \textit{a cylinder} $I_{n}\left(a_{1},\ldots,a_{n}\right)$  \textit{of order} $n$ as
\begin{align*}
I_{n}\left(a_{1},\ldots,a_{n}\right):=\left\{x\in\left[0,1\right):a_{1}\left(x\right)=a_{1},\ldots,a_{n}\left(x\right)=a_{n}\right\}.
\end{align*}
Then, we have
\begin{proposition}[{\protect\cite{Khinchin_book}}]\label{range}
For any $n\ge 1$ and $\left(a_{1},\ldots,a_{n}\right)\in\mathbb{N}^{n}$, with $p_{k},q_{k}$ being defined recursively by $\left(\ref{rule}\right)$ where $0\le k\le n$, we have
\begin{equation}I_{n}\left(a_{1},\ldots,a_{n}\right)=\left\{
  \begin{array}{ll}
   \left[\dfrac{p_{n}}{q_{n}},\dfrac{p_{n}+p_{n-1}}{q_{n}+q_{n-1}}\right) &\textrm{\rm if }\ \ n\textrm{ is even},\\[2ex]
   \left(\dfrac{p_{n}+p_{n-1}}{q_{n}+q_{n-1}},\dfrac{p_{n}}{q_{n}}\right] &\textrm{\rm if } \ \ n\textrm{ is odd}.\\
  \end{array}
\right.
\end{equation}
The length of the $n$th cylinder is given by
\begin{equation}\label{length}
\left|I_{n}\left(a_{1},\ldots,a_{n}\right)\right|=\dfrac{1}{q_{n}\left(q_{n}+q_{n-1}\right)}.
\end{equation}
\end{proposition}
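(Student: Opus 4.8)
The plan is to establish both assertions of the proposition — the explicit shape of $I_n(a_1,\dots,a_n)$ and the length formula \eqref{length} — by induction on $n$, following the classical linear‑fractional description of the continued‑fraction algorithm. First I would record two elementary identities that come straight out of the recursion \eqref{rule}. One is the determinant relation $p_nq_{n-1}-p_{n-1}q_n=(-1)^{n-1}$: the base case is $p_1q_0-p_0q_1=1$, and the inductive step is immediate from \eqref{rule}. The other is the identity
$$x=\frac{p_n+p_{n-1}\,T^n x}{q_n+q_{n-1}\,T^n x}\qquad\text{for every }x\in I_n(a_1,\dots,a_n),$$
which for $n=1$ reduces to $x=1/(a_1+Tx)$ with $p_1/q_1=1/a_1$, and whose inductive step follows by substituting $T^{n-1}x=1/(a_n+T^n x)$ into the order‑$(n-1)$ identity and collecting terms via \eqref{rule}. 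I would also state the converse: for every $\xi\in[0,1)$ the point $\phi(\xi):=\dfrac{p_{n-1}\xi+p_n}{q_{n-1}\xi+q_n}$ lies in $I_n(a_1,\dots,a_n)$ and satisfies $T^n(\phi(\xi))=\xi$. Granting this, $\phi$ is a bijection of $[0,1)$ onto $I_n(a_1,\dots,a_n)$, so the cylinder is exactly $\phi([0,1))$.

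Once this correspondence is in place the geometry is forced. The map $\phi$ is a linear‑fractional transformation, and by the determinant identity $\phi'(\xi)=(-1)^n\big/(q_{n-1}\xi+q_n)^2$, so $\phi$ is strictly increasing on $[0,1)$ when $n$ is even and strictly decreasing when $n$ is odd. Since $\phi(0)=p_n/q_n$ is attained and the endpoint value $\phi(1)=(p_n+p_{n-1})/(q_n+q_{n-1})$ is a limit point not attained (because $1\notin[0,1)$), the image is $\big[\tfrac{p_n}{q_n},\tfrac{p_n+p_{n-1}}{q_n+q_{n-1}}\big)$ for $n$ even and $\big(\tfrac{p_n+p_{n-1}}{q_n+q_{n-1}},\tfrac{p_n}{q_n}\big]$ for $n$ odd, which is precisely the claimed description. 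The length is then the distance between the two endpoints:
$$|I_n(a_1,\dots,a_n)|=\left|\frac{p_n}{q_n}-\frac{p_n+p_{n-1}}{q_n+q_{n-1}}\right|=\frac{|p_nq_{n-1}-p_{n-1}q_n|}{q_n(q_n+q_{n-1})}=\frac{1}{q_n(q_n+q_{n-1})},$$
again using the determinant identity.

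The step I expect to require the most care is the converse claim that $\phi$ maps $[0,1)$ onto all of $I_n(a_1,\dots,a_n)$ — equivalently, that $T^n$ sends the cylinder surjectively onto $[0,1)$ with $\phi$ as the relevant inverse branch. For irrational $x$ this is transparent, but one must fix a convention for the continued‑fraction expansion of the countably many rational points in the cylinder (say, the terminating expansion whose last partial quotient is at least $2$) and check consistency at the two endpoints $p_n/q_n$ and $(p_n+p_{n-1})/(q_n+q_{n-1})$; this is routine bookkeeping, but it is the only place where the half‑openness of the intervals, and hence the parity dichotomy, is actually decided. An alternative that avoids the convention check is a direct induction using $I_n(a_1,\dots,a_n)=I_1(a_1)\cap T^{-1}I_{n-1}(a_2,\dots,a_n)$ together with the explicit inverse branch $u\mapsto 1/(a_1+u)$ of $T$ on $I_1(a_1)=\big(\tfrac{1}{a_1+1},\tfrac1{a_1}\big]$, at the cost of a short computation relating the convergents of $(a_1,\dots,a_n)$ to those of $(a_2,\dots,a_n)$.
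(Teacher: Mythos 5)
Your proof is correct. The paper offers no proof of this proposition --- it is quoted verbatim from Khinchin's book --- and your argument via the inverse branch $\phi(\xi)=\dfrac{p_{n-1}\xi+p_{n}}{q_{n-1}\xi+q_{n}}$, the determinant identity $p_{n}q_{n-1}-p_{n-1}q_{n}=(-1)^{n-1}$, and the sign of $\phi'$ is exactly the standard one, with the endpoint convention for rational points being the only delicate step and one you correctly identify.
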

For a better estimation, we need the following results. For any $n\ge 1$ and $\left(a_{1},\ldots,a_{n}\right)\in\mathbb{N}^{n}$, we have
\begin{lemma}[{\protect\cite{Khinchin_book}}] For any $1\le k\le n$, we have
\begin{equation}\label{c1}
\dfrac{a_{k}+1}{2}\le\dfrac{q_{n}\left(a_{1},\ldots,a_{n}\right)}{q_{n-1}\left(a_{1},\ldots,a_{k-1},a_{k+1},\ldots,a_{n}\right)}\le a_{k}+1.
\end{equation}
\end{lemma}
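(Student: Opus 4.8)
The plan is to reduce \eqref{c1} to two one-line inequalities between the continued-fraction denominators of the blocks into which the index $k$ cuts the string $(a_1,\ldots,a_n)$. Two preliminary facts from \eqref{rule} will be used. First, positivity and monotonicity: for every finite string $(c_1,\ldots,c_m)$ of positive integers one has $1\le q_{m-1}(c_1,\ldots,c_{m-1})\le q_m(c_1,\ldots,c_m)$, by an immediate induction (using $c_m\ge1$ and $q_{m-2}\ge0$); combined with the mirror identity $q_m(c_1,\ldots,c_m)=q_m(c_m,\ldots,c_1)$ the same holds when the \emph{first} entry is removed. Second, the classical splitting identity
\begin{equation*}
q_m(c_1,\ldots,c_m)=q_j(c_1,\ldots,c_j)\,q_{m-j}(c_{j+1},\ldots,c_m)+q_{j-1}(c_1,\ldots,c_{j-1})\,q_{m-j-1}(c_{j+2},\ldots,c_m),\qquad 1\le j\le m-1,
\end{equation*}
which follows from \eqref{rule} by induction on $m-j$ (alternatively, it and the mirror identity are both visible from the matrix product $\left(\begin{smallmatrix}c_1&1\\1&0\end{smallmatrix}\right)\cdots\left(\begin{smallmatrix}c_m&1\\1&0\end{smallmatrix}\right)$, whose top-left entry equals $q_m(c_1,\ldots,c_m)$).

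Fix $k$ with $2\le k\le n-1$ and abbreviate $A=q_{k-1}(a_1,\ldots,a_{k-1})$, $A'=q_{k-2}(a_1,\ldots,a_{k-2})$, $B=q_{n-k}(a_{k+1},\ldots,a_n)$, $B'=q_{n-k-1}(a_{k+2},\ldots,a_n)$, so that $A\ge A'\ge1$ and $B\ge B'\ge1$ by the first fact. Applying the splitting identity to $(a_1,\ldots,a_n)$ at $j=k$, together with $q_k(a_1,\ldots,a_k)=a_kA+A'$ from \eqref{rule}, gives $q_n(a_1,\ldots,a_n)=a_kAB+A'B+AB'$; applying it to the contracted string $(a_1,\ldots,a_{k-1},a_{k+1},\ldots,a_n)$ at $j=k-1$ gives $q_{n-1}(a_1,\ldots,a_{k-1},a_{k+1},\ldots,a_n)=AB+A'B'$. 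Hence the quotient in \eqref{c1} equals $(a_kAB+A'B+AB')/(AB+A'B')$. For the upper bound, $(A-A')(B-B')\ge0$ yields $A'B+AB'\le AB+A'B'$, whence the quotient is at most $((a_k+1)AB+A'B')/(AB+A'B')\le a_k+1$ (because $1\le a_k+1$). For the lower bound, replace $A'B+AB'$ in the numerator by the smaller quantity $A'B'$ and use $A'B'\le AB$ together with the fact that $x\mapsto(a_kAB+x)/(AB+x)$ is nonincreasing on $(0,AB]$ when $a_k\ge1$:
\begin{equation*}
\frac{a_kAB+A'B+AB'}{AB+A'B'}\ \ge\ \frac{a_kAB+A'B'}{AB+A'B'}\ \ge\ \frac{a_kAB+AB}{AB+AB}\ =\ \frac{a_k+1}{2}.
\end{equation*}

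It remains to treat $k=n$ and $k=1$. For $k=n$, \eqref{rule} gives directly $q_n(a_1,\ldots,a_n)=a_nq_{n-1}(a_1,\ldots,a_{n-1})+q_{n-2}(a_1,\ldots,a_{n-2})$ with $0\le q_{n-2}(a_1,\ldots,a_{n-2})\le q_{n-1}(a_1,\ldots,a_{n-1})$, so the quotient lies in $[a_n,a_n+1]$; for $k=1$ the mirror identity reduces this to the same computation, giving a quotient in $[a_1,a_1+1]$. Since $a_k\ge1$ implies $\tfrac{a_k+1}{2}\le a_k$, in both cases the quotient lies in $[\tfrac{a_k+1}{2},a_k+1]$, finishing the proof. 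The only point needing care is the index bookkeeping in the splitting identity and its degenerate instances (e.g. $A'=q_0=1$ when $k=2$, and $B'=q_0=1$ when $k=n-1$); once the two identities $q_n=a_kAB+A'B+AB'$ and $q_{n-1}(\ldots)=AB+A'B'$ are established, the estimate is entirely elementary, so this bookkeeping is the main — and essentially the only — obstacle.
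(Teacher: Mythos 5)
Your proof is correct: the splitting identity $q_n=a_kAB+A'B+AB'$ and $q_{n-1}(a_1,\ldots,a_{k-1},a_{k+1},\ldots,a_n)=AB+A'B'$, together with $A\ge A'\ge 1$, $B\ge B'\ge 1$, immediately give both bounds, and your edge cases $k=1$, $k=n$ check out. The paper simply cites Khinchin for this lemma without proof, and your continuant-splitting argument is the standard one for this classical fact, so there is nothing to compare beyond confirming its correctness.
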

\begin{lemma}[{\protect\cite{Khinchin_book}}] For any $k\ge 1$, we have
\begin{equation}\label{c2}
\begin{split}
q_{n+k}\left(a_{1},\ldots,a_{n},a_{n+1},\ldots,a_{n+k}\right)&\ge q_{n}\left(a_{1},\ldots,a_{n}\right)\cdot q_{k}\left(a_{n+1},\ldots,a_{n+k}\right),\\
q_{n+k}\left(a_{1},\ldots,a_{n},a_{n+1},\ldots,a_{n+k}\right)&\le 2q_{n}\left(a_{1},\ldots,a_{n}\right)\cdot q_{k}\left(a_{n+1},\ldots,a_{n+k}\right).
\end{split}
\end{equation}
\end{lemma}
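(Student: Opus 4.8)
The plan is to deduce both inequalities from a single concatenation identity for the continuant denominators, itself a consequence of the matrix form of the recursion \eqref{rule}. Setting $M(a)=\bigl(\begin{smallmatrix}0&1\\ 1&a\end{smallmatrix}\bigr)$, a one-line induction on $m$ (using $p_{0}=0$, $q_{0}=1$ together with the usual conventions $p_{-1}=1$, $q_{-1}=0$) gives
\[
\begin{pmatrix} p_{m-1}(a_{1},\ldots,a_{m-1}) & p_{m}(a_{1},\ldots,a_{m})\\ q_{m-1}(a_{1},\ldots,a_{m-1}) & q_{m}(a_{1},\ldots,a_{m})\end{pmatrix}=M(a_{1})M(a_{2})\cdots M(a_{m})
\]
for every $m\ge 1$.

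I would then split this product at index $n$. Writing $p'_{j},q'_{j}$ for the convergent numerators and denominators of $[a_{n+1},a_{n+2},\ldots,a_{n+k}]$, multiplying the factor $M(a_{1})\cdots M(a_{n})$ by the factor $M(a_{n+1})\cdots M(a_{n+k})$ and comparing with the identity above for $m=n+k$ gives
\[
\begin{pmatrix} p_{n+k-1} & p_{n+k}\\ q_{n+k-1} & q_{n+k}\end{pmatrix}=\begin{pmatrix} p_{n-1} & p_{n}\\ q_{n-1} & q_{n}\end{pmatrix}\begin{pmatrix} p'_{k-1} & p'_{k}\\ q'_{k-1} & q'_{k}\end{pmatrix},
\]
and reading off the lower-right entry yields
\[
q_{n+k}(a_{1},\ldots,a_{n+k})=q_{n}(a_{1},\ldots,a_{n})\,q_{k}(a_{n+1},\ldots,a_{n+k})+q_{n-1}(a_{1},\ldots,a_{n-1})\,p_{k}(a_{n+1},\ldots,a_{n+k}).
\]

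Both bounds drop out of this identity. As every continuant numerator and denominator appearing is a positive integer, discarding the second summand gives the lower bound at once. For the upper bound I would use the two elementary monotonicity facts $p_{k}(a_{n+1},\ldots,a_{n+k})\le q_{k}(a_{n+1},\ldots,a_{n+k})$ and $q_{n-1}(a_{1},\ldots,a_{n-1})\le q_{n}(a_{1},\ldots,a_{n})$ --- both immediate from \eqref{rule} by induction, the latter because $a_{n}\ge 1$ and $q_{n-2}\ge 0$ --- which together bound the second summand by $q_{n}(a_{1},\ldots,a_{n})q_{k}(a_{n+1},\ldots,a_{n+k})$, hence $q_{n+k}\le 2q_{n}(a_{1},\ldots,a_{n})q_{k}(a_{n+1},\ldots,a_{n+k})$.

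Since this is classical (it is recorded, e.g., in Khinchin's book, hence the citation), I do not anticipate a genuine obstacle; the only point needing care is the index bookkeeping in the concatenation step --- keeping straight which block is shortened on each side and checking the boundary case $k=1$, where the identity collapses to $q_{n+1}=a_{n+1}q_{n}+q_{n-1}$ with $p_{1}(a_{n+1})=1$. A matrix-free alternative is induction on $k$ directly from \eqref{rule}, combining the recursion for $q_{n+k}$ with that for $q_{k}(a_{n+1},\ldots,a_{n+k})$; this is marginally more tedious but needs no external identity.
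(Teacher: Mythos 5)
Your proof is correct. The paper itself gives no argument for this lemma --- it is quoted directly from Khinchin's book --- so there is nothing to compare against; your matrix concatenation identity $q_{n+k}=q_{n}q_{k}(a_{n+1},\ldots,a_{n+k})+q_{n-1}p_{k}(a_{n+1},\ldots,a_{n+k})$, together with positivity for the lower bound and $p_{k}\le q_{k}$, $q_{n-1}\le q_{n}$ for the upper bound, is exactly the standard derivation and all the index bookkeeping checks out.
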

For any $n\ge 1$, since $p_{n-1}q_{n}-p_{n}q_{n-1}=\left(-1\right)^{n}$, combined with simple calculation, then
\begin{equation}\label{qE}
q_{n}\ge 2^{\left(n-1\right)/2}.
\end{equation}

Next, we introduce the mass distribution principle which is useful in calculating the lower bound of Hausdorff dimension.
\begin{proposition}[{\protect\cite{Falconer_book2020}}]\label{MD}Let $E\subseteq\left[0,1\right)$ be a Borel set and $\mu$ be a measure with $\mu\left(E\right)>0$, suppose that for some $s>0$, there is a constant $c>0$ such that for any $x\in\left[0,1\right)$ one has
\begin{equation}\label{MA}
\mu\left(B\left(x,r\right)\right)\le cr^{s},
\end{equation}
where $B\left(x,r\right)$ denotes an open ball centred at $x$ and radius $r$, then $\dim_{\HH}E\ge s$.
\end{proposition}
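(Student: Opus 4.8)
The plan is to prove the quantitative estimate $\HH^{s}(E)\ge 2^{-s}c^{-1}\mu(E)>0$, from which $\dim_{\HH}E\ge s$ follows immediately, since a set of positive $s$-dimensional Hausdorff measure has Hausdorff dimension at least $s$. Recall that $\HH^{s}(E)=\lim_{\delta\downarrow 0}\HH^{s}_{\delta}(E)$, where $\HH^{s}_{\delta}(E)=\inf\sum_{i}(\diam U_{i})^{s}$, the infimum taken over all countable covers $\{U_{i}\}$ of $E$ by sets of diameter at most $\delta$; in this definition one may harmlessly assume the $U_{i}$ to be Borel, so that $\mu$ applies to them.

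First I would fix $\delta>0$ and an arbitrary such cover $\{U_{i}\}_{i\ge 1}$ of $E$, with the goal of bounding $\sum_{i}(\diam U_{i})^{s}$ below by a multiple of $\mu(E)$ that does not depend on the cover. By monotonicity and countable subadditivity of $\mu$, $\mu(E)\le\sum_{i}\mu(U_{i})$. Applying \eqref{MA} with $r\downarrow 0$ shows that $\mu(\{x\})=0$ for every $x\in[0,1)$, so the indices $i$ with $\diam U_{i}=0$ contribute nothing to $\sum_{i}\mu(U_{i})$ and, since $s>0$, nothing to $\sum_{i}(\diam U_{i})^{s}$ either; we discard them. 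For each remaining $i$ pick a point $x_{i}\in U_{i}$ and use the trivial inclusion $U_{i}\subseteq B(x_{i},2\diam U_{i})$, valid because $|y-x_{i}|\le\diam U_{i}<2\diam U_{i}$ for all $y\in U_{i}$.

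Feeding the balls $B(x_{i},2\diam U_{i})$ into \eqref{MA} then yields
\[
\mu(E)\ \le\ \sum_{i}\mu(U_{i})\ \le\ \sum_{i}\mu\big(B(x_{i},2\diam U_{i})\big)\ \le\ c\sum_{i}(2\diam U_{i})^{s}\ =\ 2^{s}c\sum_{i}(\diam U_{i})^{s}.
\]
As the cover was arbitrary, taking the infimum gives $\HH^{s}_{\delta}(E)\ge 2^{-s}c^{-1}\mu(E)$ for every $\delta>0$, and letting $\delta\downarrow 0$ produces $\HH^{s}(E)\ge 2^{-s}c^{-1}\mu(E)>0$, which is what we wanted.

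I do not anticipate any real obstacle; the argument is short and standard. The only steps that call for a little care are: (i) the passage from the arbitrary cover sets in the definition of $\HH^{s}$ to genuine balls, responsible for the harmless factor $2^{s}$ above — it can be removed entirely either by recalling that the Hausdorff measure computed with covers by balls is comparable to $\HH^{s}$, or by using closed balls together with $\mu(\overline{B(x,r)})\le\mu(B(x,2r))\le c(2r)^{s}$; and (ii) the circumstance, typical in applications such as the proof of Theorem~\ref{thm2}, that \eqref{MA} is verified only for all sufficiently small radii $r\le r_{0}$, in which case one simply imposes $\delta<r_{0}/2$ throughout, with no other change to the proof.
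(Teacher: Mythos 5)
Your proof is correct, and it is exactly the standard argument for the mass distribution principle (compare \cite[Proposition 4.2]{Falconer_book2020}). The paper itself does not reprove this statement — it is quoted as a black box from Falconer's book — so there is no in-paper proof to compare against; your derivation, including the careful disposal of the zero-diameter cover sets via $\mu(\{x\})=0$ and the passage from arbitrary cover sets to balls at the cost of a factor $2^{s}$, is the canonical one.
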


\subsection{Pressure Function}\label{Pressure Functions}
In this section we present some basic properties of pressure function. The pressure function with a continuous potential can be approximated by the pressure function restricted to the sub-systems in continued fractions. More details of this can be found in Hanus, Mauldin and Urba\'nski \cite{HaMaUr}, Mauldin and Urba\'nski  \cite{MaUr96,MaUr99} or their monogragh \cite{MaUr03}. At this time, we only give the application.

Consider a sub-system $(X_{\mathcal{A}},T)$ of system $([0,1),T)$, define
$$X_{\mathcal{A}}=\Big\{x\in\left[0,1\right):a_{n}\left(x\right)\in\mathcal{A},\textrm{ for all }n\ge 1\Big\},$$
where $\mathcal{A}\subseteq\mathbb{N}$ is finite or infinite. Then, we give the pressure function which is restricted to system $(X_{\mathcal{A}},T)$. Given any real function $\phi:[0,1)\rightarrow\mathbb{R}$, define
\begin{equation}\label{pressure}
\mathsf{P}_{\mathcal{A}}(T,\phi)=\lim_{n\rightarrow\infty}\frac{1}{n}\log \sum_{(a_1,\ldots,a_n)\in\mathcal{A}^{n}}\sup_{x\in X_{\mathcal{A}}} e^{S_n\phi([a_1,\ldots,a_n+x])},
\end{equation}
where $S_n\phi$ denotes the ergodic sum $\phi(x)+\cdots+\phi(T^{n-1}x)$. For simplicity, when $\mathcal{A}=\mathbb{N}$, we denote $\mathsf{P}(T,\phi)$ by $\mathsf{P}_{\mathbb{N}}(T,\phi)$. We will use the notation of $n$th variation of $\phi$ to explain the existence of limit (\ref{pressure}), let
$$\Var_n(\phi):=\sup\Big\{|\phi(x)-\phi(y)|:I_n(x)=I_y(y)\Big\}.$$
In \cite{LiWaWuXu}, authors gave a proposition to guarantee the existence of the limit from the definition of $\mathsf{P}_{\mathcal{A}}(T,\phi)$.
\begin{proposition}[{\protect\cite{LiWaWuXu}}]\label{pro11}
Let $\phi:\left[0,1\right)\rightarrow\mathbb{R}$ be a real function with ${\rm{Var}}_{1}\left(\phi\right)<\infty$ and ${\rm{Var}}_{n}\left(\phi\right)\rightarrow 0$ as $n\rightarrow\infty$. Then the limit defining $\mathsf{P}_{\mathcal{A}}(T,\phi)$ exists and the value of $\mathsf{P}_{\mathcal{A}}(T,\phi)$ remains the same even without taking supremum over $x\in X_{\mathcal{A}}$ in (\ref{pressure}).
\end{proposition}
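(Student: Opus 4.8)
Write $\Phi_n(a_1,\dots,a_n):=\sup_{x\in X_{\mathcal A}}e^{S_n\phi([a_1,\dots,a_n+x])}$ and $Z_n:=\sum_{(a_1,\dots,a_n)\in\mathcal A^{\,n}}\Phi_n(a_1,\dots,a_n)$, so that $\mathsf P_{\mathcal A}(T,\phi)=\lim_{n\to\infty}\frac1n\log Z_n$ once this limit is shown to exist. The plan has two steps. First I would establish the submultiplicativity bound $Z_{m+n}\le Z_mZ_n$, so that $\log Z_n$ is subadditive and Fekete's lemma gives $\lim_n\frac1n\log Z_n=\inf_n\frac1n\log Z_n$. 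Second, I would use the hypothesis $\Var_n(\phi)\to0$ to show that replacing the supremum in $\Phi_n$ by the value of $e^{S_n\phi}$ at an \emph{arbitrary} point of $I_n(a_1,\dots,a_n)$ alters $Z_n$ by a factor $e^{o(n)}$ only, so that the limit is unaffected; this is exactly the ``without supremum'' assertion.

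For the first step, the input is the bookkeeping fact obtained by unwinding the Gauss map along a continued fraction: for $(a_1,\dots,a_{m+n})\in\mathcal A^{\,m+n}$ and $x\in X_{\mathcal A}$, the point $w:=[a_1,\dots,a_{m+n}+x]$ has partial-quotient string $a_1,\dots,a_{m+n},a_1(x),a_2(x),\dots$, hence lies in $I_m(a_1,\dots,a_m)\cap X_{\mathcal A}$, can be rewritten as $[a_1,\dots,a_m+y]$ for $y:=[a_{m+1},\dots,a_{m+n}+x]\in X_{\mathcal A}$, and satisfies $T^m w=[a_{m+1},\dots,a_{m+n}+x]\in X_{\mathcal A}$. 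Combined with the cocycle identity $S_{m+n}\phi=S_m\phi+(S_n\phi)\circ T^m$ this gives
\[
e^{S_{m+n}\phi(w)}=e^{S_m\phi([a_1,\dots,a_m+y])}\,e^{S_n\phi([a_{m+1},\dots,a_{m+n}+x])}\le\Phi_m(a_1,\dots,a_m)\,\Phi_n(a_{m+1},\dots,a_{m+n}).
\]
Taking $\sup_x$ and summing over all length-$(m+n)$ words factorises the right-hand side, yielding $Z_{m+n}\le Z_mZ_n$. Fekete's subadditive lemma then produces the limit (a priori in $[-\infty,+\infty]$; it is finite whenever $Z_1<\infty$, which holds for the potentials used in this paper, e.g.\ $\phi=-s\log|T'|-(2s-1)\log B$ with $s>\tfrac12$).

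For the second step I would first note that $S_n\phi$ is nearly constant on cylinders of order $n$: if $y,z\in I_n(a_1,\dots,a_n)$ then $T^ky,T^kz\in I_{n-k}(a_{k+1},\dots,a_n)$ for $0\le k\le n-1$, whence
\[
\bigl|S_n\phi(y)-S_n\phi(z)\bigr|\le\sum_{k=0}^{n-1}\Var_{n-k}(\phi)=\sum_{j=1}^{n}\Var_j(\phi)=:R_n .
\]
Since $\Var_j(\phi)\le\Var_1(\phi)<\infty$ for every $j$ and $\Var_j(\phi)\to0$, the Ces\`aro average satisfies $R_n/n\to0$. Now fix for each word an arbitrary point $\xi_{(a_1,\dots,a_n)}\in I_n(a_1,\dots,a_n)\cap X_{\mathcal A}$ — for instance the periodic point $[\overline{a_1,\dots,a_n}]$, which lies in $X_{\mathcal A}$ because every $a_i\in\mathcal A$. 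Then $e^{S_n\phi(\xi_{(a_1,\dots,a_n)})}\le\Phi_n(a_1,\dots,a_n)\le e^{R_n}e^{S_n\phi(\xi_{(a_1,\dots,a_n)})}$, so after summing over words and applying $\tfrac1n\log$, the supremum-free partition sum differs from $\tfrac1n\log Z_n$ by at most $R_n/n\to0$ and thus converges to the same limit; since the $\xi$'s were chosen arbitrarily, the supremum in \eqref{pressure} may be omitted without changing the value.

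The only genuinely delicate point is the bookkeeping behind $Z_{m+n}\le Z_mZ_n$: one has to check that cutting a length-$(m+n)$ word at position $m$ really sends the two orbit segments into $I_m(a_1,\dots,a_m)$ and $I_n(a_{m+1},\dots,a_{m+n})$ \emph{while remaining inside} $X_{\mathcal A}$, so that the sum factorises cleanly with no boundary contributions. Everything else is the routine combination of Fekete's lemma with the Ces\`aro estimate $R_n=o(n)$, which is precisely where $\Var_1(\phi)<\infty$ and $\Var_n(\phi)\to0$ come in.
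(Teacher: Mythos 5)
Your argument is correct and is essentially the standard proof: submultiplicativity of the partition sums via the cocycle identity plus Fekete's lemma for existence, and the Ces\`aro estimate $\tfrac1n\sum_{j\le n}\mathrm{Var}_j(\phi)\to0$ (using $\mathrm{Var}_1(\phi)<\infty$ and $\mathrm{Var}_n(\phi)\to0$) to remove the supremum. Note that the paper itself gives no proof — it simply cites \cite{LiWaWuXu} — and your argument matches the one given there; the only caveat, which you already flag, is that when $Z_1=\infty$ (e.g.\ $\mathcal A=\mathbb N$ with the paper's potential and $s\le\tfrac12$) the limit exists only with the convention $\mathsf P_{\mathcal A}=+\infty$.
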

The following proposition states that the pressure function has a continuity property when the system $([0,1),T)$ is approximated by its sub-system $(X_{\mathcal{A}},T)$.
\begin{proposition}[{\protect\cite{HaMaUr}}]\label{pro}
Let $\phi:\left[0,1\right)\rightarrow\mathbb{R}$ be a real function with ${\rm{Var}}_{1}\left(\phi\right)<\infty$ and ${\rm{Var}}_{n}\left(\phi\right)\rightarrow 0$ as $n\rightarrow\infty$. We have
$$\mathsf{P}_{\mathbb{N}}\left(T,\phi\right)=\sup\Big\{\mathsf{P}_{\mathcal{A}}\left(T,\phi\right):\mathcal{A}\textrm{ is a finite subset of }\mathbb{N}\Big\}.$$
\end{proposition}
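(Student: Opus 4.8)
The plan is to prove the two inequalities separately. One direction is immediate: for any finite $\mathcal A\subseteq\mathbb N$ the sum defining $\mathsf P_{\mathcal A}(T,\phi)$ in \eqref{pressure} runs over the smaller index set $\mathcal A^n\subseteq\mathbb N^n$ and the supremum is over the smaller set $X_{\mathcal A}\subseteq[0,1)$, so the order-$n$ partition function for $(X_{\mathcal A},T)$ is at most that for $([0,1),T)$; letting $n\to\infty$ gives $\mathsf P_{\mathcal A}(T,\phi)\le\mathsf P_{\mathbb N}(T,\phi)$, hence $\sup_{\mathcal A}\mathsf P_{\mathcal A}(T,\phi)\le\mathsf P_{\mathbb N}(T,\phi)$. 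Since every finite subset of $\mathbb N$ is contained in some $\{1,\dots,M\}$ and the pressure is monotone under inclusion of alphabets, for the reverse inequality it suffices to show $\mathsf P_{\{1,\dots,M\}}(T,\phi)\to\mathsf P_{\mathbb N}(T,\phi)$ as $M\to\infty$.

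I would first put the partition function in a convenient form: for nonempty $\mathcal A\subseteq\mathbb N$ set $Z_n(\mathcal A):=\sum_{(a_1,\dots,a_n)\in\mathcal A^n}\sup_{x\in I_n(a_1,\dots,a_n)}e^{S_n\phi(x)}$. By Proposition \ref{pro11}, $\lim_n\tfrac1n\log Z_n(\mathcal A)$ exists and equals $\mathsf P_{\mathcal A}(T,\phi)$; indeed, replacing $\sup_{x\in X_{\mathcal A}}$ by the supremum over the full cylinder alters each summand by a factor in $[1,e^{V_n}]$, where $V_n:=\sum_{j=1}^n\Var_j(\phi)=o(n)$ since $\Var_j(\phi)\to0$, which does not affect the exponential rate. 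The point is that $Z_n$ obeys multiplicativity estimates whose constants depend only on $\phi$, uniformly in $\mathcal A$: splitting a length-$(n+m)$ word into its first $n$ and last $m$ digits and using $S_{n+m}\phi(x)=S_n\phi(x)+S_m\phi(T^nx)$ on cylinders gives $Z_{n+m}(\mathcal A)\le Z_n(\mathcal A)Z_m(\mathcal A)$, whereas concatenating a length-$n$ word with a length-$m$ word and controlling the ergodic sum over the first block up to its oscillation $V_n$ gives $Z_{n+m}(\mathcal A)\ge e^{-V_n}Z_n(\mathcal A)Z_m(\mathcal A)$. By Fekete's subadditivity lemma, $\mathsf P_{\mathcal A}(T,\phi)=\inf_n\tfrac1n\log Z_n(\mathcal A)$ (allowing the value $+\infty$, which by supermultiplicativity propagates from any single $n$ to all larger ones); in particular $\mathsf P_{\mathbb N}(T,\phi)\le\tfrac1N\log Z_N(\mathbb N)$ for every $N$. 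Iterating the supermultiplicativity bound along multiples of a fixed $N$, each step splitting off a length-$N$ block and costing a factor $e^{-V_N}$, yields the key estimate
\[
\mathsf P_{\mathcal A}(T,\phi)\ \ge\ \frac1N\log Z_N(\mathcal A)-\frac{V_N}{N}\qquad\text{for every }\mathcal A\text{ and every }N.
\]

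To conclude, fix $\varepsilon>0$. Since $\Var_j(\phi)\to0$, Cesàro summation gives $V_N/N\to0$, so fix $N$ with $V_N/N<\varepsilon$. As $Z_N(\mathcal A)$ is a sum of nonnegative terms over $\mathcal A^N$, monotone convergence gives $Z_N(\{1,\dots,M\})\uparrow Z_N(\mathbb N)$ as $M\to\infty$. If $Z_N(\mathbb N)<\infty$, choose $M$ with $Z_N(\{1,\dots,M\})\ge e^{-\varepsilon N}Z_N(\mathbb N)$; then the key estimate gives $\mathsf P_{\{1,\dots,M\}}(T,\phi)\ge\tfrac1N\log Z_N(\mathbb N)-2\varepsilon\ge\mathsf P_{\mathbb N}(T,\phi)-2\varepsilon$. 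If $Z_N(\mathbb N)=\infty$, then $\mathsf P_{\mathbb N}(T,\phi)=+\infty$, and for any $L$ one can choose $M$ with $Z_N(\{1,\dots,M\})\ge e^{LN}$, so $\mathsf P_{\{1,\dots,M\}}(T,\phi)\ge L-\varepsilon$. Either way $\sup_{\mathcal A}\mathsf P_{\mathcal A}(T,\phi)\ge\mathsf P_{\mathbb N}(T,\phi)$, which together with the first paragraph proves the proposition.

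The main obstacle is establishing the key estimate with the error term $V_N/N$ independent of the alphabet $\mathcal A$: this uniformity is what allows one to freeze $N$ using only $\Var_j(\phi)\to0$ and then exhaust $\mathbb N$ by finite alphabets. Care is also needed in the case where the partition functions $Z_n(\mathbb N)$, and hence $\mathsf P_{\mathbb N}(T,\phi)$, are infinite — precisely the regime that occurs in the dimension computations of Theorems \ref{thm1} and \ref{thm2}. Once the uniform estimate is in hand, the passage to the limit is a soft consequence of the monotone convergence $Z_N(\{1,\dots,M\})\uparrow Z_N(\mathbb N)$.
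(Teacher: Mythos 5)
The paper gives no proof of Proposition~\ref{pro}; the result is quoted directly from Hanus, Mauldin and Urba\'nski \cite{HaMaUr}, so there is no in-text argument to compare yours against. Your proof is correct and is essentially the standard route for this kind of inner approximation of pressure. The two pillars are exactly right: submultiplicativity of the partition function $Z_n(\mathcal A)$ gives $\mathsf P_{\mathcal A}(T,\phi)=\inf_n\frac1n\log Z_n(\mathcal A)$ by Fekete (and in particular $\mathsf P_{\mathbb N}(T,\phi)\le\frac1N\log Z_N(\mathbb N)$ for every $N$), while the supermultiplicative bound $Z_{n+m}(\mathcal A)\ge e^{-V_n}Z_n(\mathcal A)Z_m(\mathcal A)$, with $V_n=\sum_{j\le n}\Var_j(\phi)$ independent of the alphabet, delivers the alphabet-uniform estimate $\mathsf P_{\mathcal A}(T,\phi)\ge\frac1N\log Z_N(\mathcal A)-V_N/N$. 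Freezing $N$ so that $V_N/N<\varepsilon$ (Ces\`aro) and letting $Z_N(\{1,\dots,M\})\uparrow Z_N(\mathbb N)$ by monotone convergence then closes the argument, and you treat the case $Z_N(\mathbb N)=\infty$ correctly, noting that supermultiplicativity propagates infiniteness so that $\mathsf P_{\mathbb N}(T,\phi)=+\infty$ while $M$ can be chosen to make $Z_N(\{1,\dots,M\})$ arbitrarily large. If you were to write this out in full, the only points to spell out are the $e^{V_n}$ comparison between $\sup$ over the whole cylinder and $\sup$ over tails in $X_{\mathcal A}$ (which you flagged), and the fact that $T^n$ maps each rank-$n$ cylinder bijectively onto $[0,1)$ so that the maximizer of $S_m\phi$ on $I_m(\underline b)$ is actually realized as $T^ny$ for some $y\in I_{n+m}(\underline a,\underline b)$; both are routine for the Gauss map.
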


Now we specify the potential $\phi$ for our setting. Let
\begin{align*}
\phi(x)=-s\log |T^{\prime}(x)|-(2s-1)\log B.
\end{align*}
Next, denote
$$s_{B}\left(\mathcal{A}\right):=\inf\left\{s\ge 0:\mathsf{P}_\mathcal{A} \left(-s\log\left|T^{\prime}(x)\right|-(2s-1)\log B\right)\le 0\right\},$$
$$s_{B}\left(\mathcal{A},n\right):=\inf\left\{s\ge 0:\sum_{a_{1},\ldots,a_{f(n)}\in\mathcal{A}}\frac{1}{q_{f(n)}^{2s}\left(a_{1},\ldots,a_{f(n)}\right)B^{(2s-1)dn}}\le 1\right\}.$$
If $\mathcal{A}$ is a finite subset of $\mathbb{N}$, we can see that series is equal to 1 when $s=s_{B}\left(\mathcal{A},n\right)$ or that pressure is equal to 0 when $s=s_{B}\left(\mathcal{A}\right)$. For simplicity, when $\mathcal{A}=\mathbb{N}$, we denote $s_{B}\left(\mathcal{A}\right)$ by $s_{B}$ and $s_{B}\left(\mathcal{A},n\right)$ by $s_{B}\left(n\right)$. For some integer $M\in\mathbb{N}$, when $\mathcal{A}=\{1,2,\ldots,M\}$, we denote $s_{B}\left(M,n\right)$ by $s_{B}\left(M\right)$.

It is clear that $\phi$ satisfies the variation condition. By Proposition \ref{pro}, one has
\begin{proposition}\label{pro2}
For any $M\in\mathbb{N}$, we have
\begin{align*}
\lim_{n\rightarrow\infty}s_{B}\left(n\right)=s_{B},\textrm{  }\lim_{n\rightarrow\infty}s_{B}\left(M,n\right)=s_{B}\left(M\right),\textrm{  }\lim_{M\rightarrow\infty}s_{B}\left(M\right)=s_{B}.
\end{align*}
\end{proposition}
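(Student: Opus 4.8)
The plan is to establish Proposition~\ref{pro2} by combining the finite-to-infinite continuity of the pressure function (Proposition~\ref{pro}) with the relationship between the quantities $s_B(\mathcal A,n)$ and $s_B(\mathcal A)$. There are really three assertions to prove: $\lim_{n\to\infty}s_B(n)=s_B$, $\lim_{n\to\infty}s_B(M,n)=s_B(M)$ for each fixed $M$, and $\lim_{M\to\infty}s_B(M)=s_B$. The last of these is the most structural: I would deduce it directly from Proposition~\ref{pro} applied to the potential $\phi(x)=-s\log|T'(x)|-(2s-1)\log B$, which indeed satisfies $\mathrm{Var}_1(\phi)<\infty$ and $\mathrm{Var}_n(\phi)\to 0$ since the $\log B$ term is constant and $\log|T'(x)|$ has the standard decaying variation on cylinders. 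Proposition~\ref{pro} gives $\mathsf P_{\mathbb N}(T,\phi)=\sup\{\mathsf P_{\{1,\dots,M\}}(T,\phi):M\in\mathbb N\}$; since the restricted pressure is nondecreasing in $M$ and the map $s\mapsto \mathsf P_{\mathcal A}(T,\phi_s)$ is continuous and strictly decreasing (its unique zero being $s_B(\mathcal A)$), the suprema of pressures converging upward to $\mathsf P_{\mathbb N}$ translate into $s_B(M)\uparrow s_B$.

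For the first two assertions I would work with the finite-level quantity $s_B(\mathcal A,n)$, which is defined as the value of $s$ making $\sum_{a_1,\dots,a_{f(n)}\in\mathcal A} q_{f(n)}^{-2s}B^{-(2s-1)dn}=1$. The key observation is that, up to multiplicative constants controlled by~\eqref{c2} and~\eqref{length}, the $f(n)$-fold sum $\sum q_{f(n)}^{-2s}$ behaves like $e^{f(n)\,\mathsf P_{\mathcal A}(T,-s\log|T'|)}$ in the sense that its $1/f(n)$-th root converges, and the exponential weight $B^{-(2s-1)dn}$ contributes exactly $e^{-(2s-1)dn\log B}=e^{-f(n)\cdot\frac{(2s-1)\log B\cdot dn}{f(n)}}$; since $f(n)=dn+t$, the ratio $dn/f(n)\to 1$, so the combined exponent per symbol converges to $\mathsf P_{\mathcal A}(T,-s\log|T'|-(2s-1)\log B)$. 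Taking logarithms, dividing by $f(n)$, and letting $n\to\infty$ shows the defining condition for $s_B(\mathcal A,n)$ converges to the defining condition for $s_B(\mathcal A)$; monotonicity and continuity of the pressure in $s$ then upgrade this to convergence of the roots themselves, $s_B(\mathcal A,n)\to s_B(\mathcal A)$. Applying this with $\mathcal A=\mathbb N$ gives $s_B(n)\to s_B$ and with $\mathcal A=\{1,\dots,M\}$ gives $s_B(M,n)\to s_B(M)$.

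The one point requiring care — and the main obstacle — is the case $\mathcal A=\mathbb N$, where the sum $\sum_{a_1,\dots,a_{f(n)}\in\mathbb N}q_{f(n)}^{-2s}$ is infinite unless $s>1/2$, so one must first check that the relevant $s_B(n)$ and $s_B$ lie in the range $(1/2,1]$ where everything converges, and that the finite truncations approximate the full sum uniformly enough in $n$. Here I would use that $B\ge 1$ forces $s_B\le s_{1}\le 1$ and that the Diophantine nature of the problem keeps the dimension bounded below away from $1/2$ (indeed the relevant pressure at $s=1/2$ is positive because $-\log|T'|$ has positive pressure at exponent $1$, i.e. $\mathsf P(T,-\log|T'|)=0$ with the subtracted constant term, so one gets a clean strict inequality), so a uniform lower bound $s_B(n)\ge 1/2+\delta$ holds. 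With this in hand the infinite sums are genuinely convergent and the finite-$M$ approximation from Proposition~\ref{pro} passes to the limit without trouble. For the finite-$\mathcal A$ cases there is no convergence issue at all, and the argument is the purely elementary limit computation described above.
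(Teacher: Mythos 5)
Your proposal is correct and follows the standard route that the paper itself merely gestures at: the paper offers no proof of Proposition~\ref{pro2} beyond noting that $\phi$ satisfies the variation condition and citing Proposition~\ref{pro}, which directly yields only the third limit $\lim_{M\to\infty}s_B(M)=s_B$; the first two limits are, as you say, the statement that the level-$f(n)$ sums realise the pressure (Proposition~\ref{pro11} guarantees the limit in \eqref{pressure} exists, and passing to the subsequence $m=f(n)$ with $dn/f(n)\to1$ is harmless), combined with monotonicity in $s$. Your handling of the two delicate points is sound: the divergence of the infinite-alphabet sum for $s\le 1/2$ is immaterial because those $s$ simply never satisfy the defining inequality, and $s_B>1/2$ strictly since the pressure at $s=1/2$ is $+\infty$. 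The only step I would flesh out is the ``upgrade'' from convergence of the normalised log-sums to convergence of their zeros $s_B(\mathcal A,n)\to s_B(\mathcal A)$: this needs the maps $s\mapsto \frac{1}{f(n)}\log\sum q_{f(n)}^{-2s}B^{-(2s-1)dn}$ to be decreasing with slope uniformly bounded away from $0$, which follows from $q_{f(n)}\ge 2^{(f(n)-1)/2}$ (inequality \eqref{qE}); with that observation your argument is complete and matches the arguments in the cited references of Wang--Wu and Li--Wang--Wu--Xu.
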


The following proposition states the continuity and asymptotic properties of $s_{B}$. The proof of this is similar to Lemma 2.6 in \cite{WaWu08}, we omit the details.
\begin{proposition}
As a function of $B\in\left(1,\infty\right)$, we have $s_{B}$ is continuous and decreasing. Moreover,
$$\lim_{B\rightarrow 1}s_{B}=1,\textrm{  }\lim_{B\rightarrow\infty}s_{B}=\frac{1}{2}.$$
\end{proposition}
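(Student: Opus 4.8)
The plan is to establish the two asymptotic limits $\lim_{B\to 1}s_B=1$ and $\lim_{B\to\infty}s_B=\tfrac12$, together with continuity and monotonicity, by exploiting the fact that $s_B$ is defined as the unique zero of the pressure function $B\mapsto \mathsf{P}\left(-s\log|T'|-(2s-1)\log B\right)$, a setup formally identical to that of Wang--Wu \cite{WaWu08} but with the linear potential $-(2s-1)\log B$ in place of $-s\log B$. First I would record the two elementary monotonicity facts: for fixed $s$, the pressure $\mathsf{P}\left(-s\log|T'|-(2s-1)\log B\right)$ is decreasing in $B$ whenever $2s-1>0$, i.e.\ whenever $s>\tfrac12$; and for fixed $B>1$ the same expression is (strictly) decreasing in $s$, since both $-s\log|T'|$ decreases in $s$ (as $|T'|\ge 1$) and $-(2s-1)\log B$ decreases in $s$ (as $\log B>0$). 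Because we already know from the preceding propositions that the relevant zero $s_B$ lies in $[\tfrac12,1]$, the sign conditions needed for monotonicity hold throughout, and a standard implicit-function / monotone-dependence argument then gives that $B\mapsto s_B$ is continuous and nonincreasing on $(1,\infty)$.

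For the limit as $B\to 1$: at $B=1$ the potential degenerates to $-s\log|T'|$, whose pressure zero is the Bowen-type equation $\mathsf{P}(-s\log|T'|)=0$, solved by $s=1$ (this is the classical fact that the Gauss system has full Hausdorff dimension $1$, equivalently $\mathsf{P}(-\log|T'|)=0$). For $B>1$ close to $1$ we have $s_B\le 1$ by monotonicity, and $s_B\ge s_B(M)$ for each finite truncation $M$; on a fixed finite alphabet the pressure depends continuously on the (bounded) perturbation $-(2s-1)\log B$, so $s_B(M)\to s_1(M)$ as $B\to 1$, and $s_1(M)\to 1$ as $M\to\infty$ by Proposition \ref{pro2} applied at $B=1$. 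A diagonal argument — take $B\to 1$ then $M\to\infty$, controlling the order via the uniform continuity on each finite system — squeezes $s_B$ between $s_1(M)-\varepsilon$ and $1$, yielding $\lim_{B\to 1}s_B=1$.

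For the limit as $B\to\infty$: here the key is to see that the term $-(2s-1)\log B$ forces $s_B\to\tfrac12$ from above. If $s_B$ stayed bounded away from $\tfrac12$, say $s_B\ge \tfrac12+\delta$ along a sequence $B\to\infty$, then $2s_B-1\ge 2\delta>0$, so $-(2s_B-1)\log B\to-\infty$ uniformly; since $\mathsf{P}(-s\log|T'|)$ is bounded for $s\ge\tfrac12$ (indeed $\le \mathsf{P}(-\tfrac12\log|T'|)<\infty$), the pressure $\mathsf{P}\left(-s_B\log|T'|-(2s_B-1)\log B\right)$ would be strictly negative, contradicting that $s_B$ is the zero. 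Conversely $s_B\ge\tfrac12$ cannot fail either: at $s=\tfrac12$ the $\log B$ term vanishes entirely and $\mathsf{P}(-\tfrac12\log|T'|)>0$ (the dimension of the Gauss system exceeds $\tfrac12$), so the zero must lie above $\tfrac12$. Hence $s_B\downarrow\tfrac12$. Concretely one can phrase this via the series characterization $s_B(n)$ / $s_B(M)$: the condition $\sum q_{f(n)}^{-2s}B^{-(2s-1)dn}\le 1$ shows that as $B\to\infty$ the critical $s$ is pushed down to the value where $2s-1=0$ balances the divergence of $\sum q^{-1}$.

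The main obstacle I expect is purely bookkeeping: making the double limit in the $B\to 1$ case rigorous requires carefully interleaving the finite-alphabet approximation (Proposition \ref{pro2}) with the continuity in $B$, since the convergence $s_B(M)\to s_B$ as $M\to\infty$ is not a priori uniform in $B$. This is exactly the technical point handled in \cite[Lemma 2.6]{WaWu08}, and the only real work is checking that the linear dependence $(2s-1)\log B$ (rather than $s\log B$) does not disrupt the monotonicity inputs — which it does not, precisely because $s_B\in[\tfrac12,1]$ keeps $2s-1$ nonnegative. Since the argument is a line-by-line adaptation of \cite{WaWu08}, it is legitimate to state the proposition and refer to that source for the routine details, as the authors do.
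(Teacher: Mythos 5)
Your proposal is correct in outline and follows essentially the same route as the paper, which simply defers to the argument of Lemma 2.6 in \cite{WaWu08}; note that since the extra term $-(2s-1)\log B$ is constant in $x$, one has $\mathsf{P}\left(-s\log|T'|-(2s-1)\log B\right)=\mathsf{P}\left(-s\log|T'|\right)-(2s-1)\log B$, so all four assertions reduce to the known behaviour of the strictly decreasing function $s\mapsto\mathsf{P}(-s\log|T'|)$ intersected with the lines $s\mapsto(2s-1)\log B$ through $(\tfrac12,0)$. One factual slip: $\mathsf{P}(-\tfrac12\log|T'|)$ is \emph{not} finite --- already $\sum_{a\ge1}a^{-2s}$ diverges at $s=\tfrac12$, so the pressure is $+\infty$ for all $s\le\tfrac12$ --- hence in your $B\to\infty$ step you should evaluate the pressure at $s=\tfrac12+\delta$ (where it is finite) rather than invoke boundedness on all of $[\tfrac12,\infty)$; this divergence is also the correct reason that $s_B\ge\tfrac12$, rather than a comparison with the dimension of the Gauss system. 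With that repair the argument is sound.
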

In particular, we have $s_{B}\ge\dfrac{1}{2}$.

\section{A suitable subset of $E_B$}
In this section, we mainly focus on how to construct a reasonable good subset of $E_B$ that would act as a Cantor subset. We will prove some lemmas that we will use later. Recall that
$$E_B:=\left\{x\in [0, 1): a_{f(n)}(x)a_{2f(n)}(x)\cdots a_{nf(n)}(x)\geq B^{dn^2} \ {\rm for \ infinitely \ many} \ n\in \N\right\}.$$
For any $n\ge 2$, let $\{A_{i}(n)\}_{1\le i\le n-1}$ be a sequence of real numbers such that $1\le A_i(n)\le B^{dn}$ for any $1\le i\le n-1$, and it will be optimized later. For simplicity, we write
\begin{align*}\alpha_0(n)&=1, \\ \alpha_{i}(n)&=A_{i}(n)\alpha_{i-1}(n) \ \ \text{for any} \ 1\le i\le n-1,\\
w_i&:=(a_{if(n)+1},\ldots,a_{(i+1)f(n)-1}) \ \ \text{for any} \ 0\le i\le n-1.
\end{align*}
Define a set,
\begin{align*}
F(A):=\Big\{x\in\left[0,1\right):& A_{i}(n)^n\le a_{if(n)}(x)\le 2A_{i}(n)^n, 1\le i\le n-1;\\
&\phantom{=\;\;}\left(\frac{B^{dn}}{A_{1}(n)\cdots A_{n-1}(n)}\right)^{n}\le a_{nf(n)}(x)\le 2\left(\frac{B^{dn}}{A_{1}(n)\cdots A_{n-1}(n)}\right)^{n}\textrm{ for i.m. }n\in\mathbb{N}\Big\}.
\end{align*}
There is a natural cover of it, for each $n\ge 1$, let
\begin{align*}
&\quad F_n:=\Big\{x:A_{i}(n)^n\le a_{if(n)}(x)\le 2A_{i}(n)^n, 1\le i\le n-1;\\
&\phantom{=\;\;}\quad\quad\quad\quad\quad\quad\quad\quad\quad\quad\quad\quad\quad\left(\frac{B^{dn}}{A_{1}(n)\cdots A_{n-1}(n)}\right)^{n}\le a_{nf(n)}(x)\le 2\left(\frac{B^{dn}}{A_{1}(n)\cdots A_{n-1}(n)}\right)^{n}\Big\}\\
&=\bigcup_{\substack{a_{if(n)+1},\ldots,a_{(i+1)f(n)-1}\in\mathbb{N},\\0\le i\le n-1}}\Big\{x: a_{if(n)+\ell}(x)=a_{if(n)+\ell}, 0\le i\le n-1,1\le\ell\le f(n)-1; \\
&\phantom{=\;\;}\quad A_{i}(n)^n\le a_{if(n)}(x)\le 2A_{i}(n)^n,1\le i\le n-1; \left(\frac{B^{dn}}{A_{1}(n)\cdots A_{n-1}(n)}\right)^{n}\le a_{nf(n)}(x)\le 2\left(\frac{B^{dn}}{A_{1}(n)\cdots A_{n-1}(n)}\right)^{n}\Big\}\\
&:=\bigcup_{w_i\in\mathbb{N}^{f(n)-1},0\le i\le n-1}F_n(w_0,w_1,\ldots,w_{n-1}).
\end{align*}
Subsequently, we find the most efficient cover of the set $F_n$ in a way that the sequence $\{A_{i}(n)\}_{1\le i\le n-1}$ is optimal, and that determines an effective cover of $F(A)$. More precisely,
\begin{align*}
F(A)&=\bigcap_{N\ge 1}\bigcup_{n\ge N}F_n\\
&=\bigcap_{N\ge 1}\bigcup_{n\ge N}\bigcup_{w_i\in\mathbb{N}^{f(n)-1},0\le i\le n-1}F_n(w_0,w_1,\ldots,w_{n-1}).
\end{align*}
Fix $n\ge 1$, there are $n$ potential covers for $F_n$.
\begin{itemize}
\item Cover type $i$ ($1\le i\le n-1$). For any integers $a_{jf(n)+1},\ldots,a_{(j+1)f(n)-1}\in\mathbb{N}$ with $0\le j\le i-1$, define
$$J_{if(n)-1}(a_1,\ldots,a_{if(n)-1})=\bigcup_{A_{i}(n)^n\le a_{if(n)}\le 2A_{i}(n)^n}I_{if(n)}(a_1,\ldots,a_{if(n)}).$$
The length of this interval is
\begin{align*}
\left|J_{if(n)-1}(a_1,\ldots,a_{if(n)-1})\right|=\sum_{A_{i}(n)^n\le a_{if(n)}\le 2A_{i}(n)^n}\left|\frac{p_{if(n)}}{q_{if(n)}}-\frac{p_{if(n)}+p_{if(n)-1}}{q_{if(n)}+q_{if(n)-1}}\right|\asymp\frac{1}{q_{if(n)-1}^{2}A_{i}(n)^{n}}.
\end{align*}
Thus,
\begin{align*}
\sum_{\substack{a_{jf(n)+1},\ldots,a_{(j+1)f(n)-1}\in\mathbb{N},\\0\le j\le i-1}}\left(\frac{1}{q_{if(n)-1}^{2}A_{i}(n)^{n}}\right)^{s}
&\asymp\left(\sum_{a_1,\ldots,a_{f(n)-1}\in\mathbb{N}}\frac{1}{q_{f(n)-1}^{2s}}\right)^{i}(A_{1}(n)\cdots A_{i-1}(n))^{(1-2s)n}A_i(n)^{-sn}\\
&=\left(\sum_{a_1,\ldots,a_{f(n)-1}\in\mathbb{N}}\frac{1}{q_{f(n)-1}^{2s}}\right)^{i}\alpha_{i-1}(n)^{(1-s)n}\alpha_i(n)^{-sn}.
\end{align*}

\item Cover type $n$. For any integers $a_{jf(n)+1},\ldots,a_{(j+1)f(n)-1}\in\mathbb{N}$ with $0\le j\le n-1$, define
$$J_{nf(n)-1}(a_1,\ldots,a_{nf(n)-1})=\bigcup_{\left(\frac{B^{dn}}{\alpha_{n-1}(n)}\right)^n\le a_{nf(n)}\le 2\left(\frac{B^{dn}}{\alpha_{n-1}(n)}\right)^n}I_{nf(n)}(a_1,\ldots,a_{nf(n)}).$$
The length of this interval is
\begin{align*}
\left|J_{nf(n)-1}(a_1,\ldots,a_{nf(n)-1})\right|&=\sum_{\left(\frac{B^{dn}}{\alpha_{n-1}(n)}\right)^n\le a_{nf(n)}\le 2\left(\frac{B^{dn}}{\alpha_{n-1}(n)}\right)^n}\left|\frac{p_{nf(n)}}{q_{nf(n)}}-\frac{p_{nf(n)}+p_{nf(n)-1}}{q_{nf(n)}+q_{nf(n)-1}}\right|\\
&\asymp\frac{\alpha_{n-1}(n)^n}{q_{nf(n)-1}^{2}B^{dn^2}}.
\end{align*}
With the similar estimates as above, we have
\begin{align*}
\sum_{\substack{a_{jf(n)+1},\ldots,a_{(j+1)f(n)-1}\in\mathbb{N},\\0\le j\le n-1}}\left(\frac{\alpha_{n-2}(n)^{n}}{q_{nf(n)-1}^{2}B^{dn^2}}\right)^{s}
&\asymp\left(\sum_{a_1,\ldots,a_{f(n)-1}\in\mathbb{N}}\frac{1}{q_{f(n)-1}^{2s}}\right)^{n}(A_{1}(n)\cdots A_{n-1}(n))^{(1-2s)n}\frac{\alpha_{n-1}(n)^{sn}}{B^{sdn^2}}\\
&=\left(\sum_{a_1,\ldots,a_{f(n)-1}\in\mathbb{N}}\frac{1}{q_{f(n)-1}^{2s}}\right)^{n}\alpha_{n-1}(n)^{(1-s)n}B^{-sdn^2}.
\end{align*}
\end{itemize}

From the above, for any $n\geq 1$, the most effective cover relies on the minimum of the quantities
\begin{equation*}
\min\Big\{\alpha_{1}(n)^{-s},(\alpha_{1}(n)^{(1-s)}\alpha_{2}(n)^{-s})^{\frac{1}{2}},\ldots,
(\alpha_{n-2}(n)^{(1-s)}\alpha_{n-1}(n)^{-s})^{\frac{1}{n-1}},(\alpha_{n-1}(n)^{(1-s)}B^{-sdn})^{\frac{1}{n}}\Big\}.
\end{equation*}
Given any $\{A_i(n)\}_{i\ge 1,n\ge 1}$ such that $1\le A_i(n)\le B^{dn}$, the quantities $\{\alpha_i(n)\}_{i\ge 1,n\ge 1}$ can be determined accordingly so that $F(A)$ is a subset of $E_B$. Then the dimension of $E_B$ is larger than a value which is determined by $\{\alpha_i(n)\}_{i\ge 1,n\ge 1}$. This leads us to maximize the sequence $\{\alpha_i(n)\}_{1\le i\le n-1}$ to find an optimal cover for each $n\ge 1$. More precisely, we have the following.
\begin{proposition}\label{pro1}
With notation as above, given any $n\ge 2$, the following supremum
\begin{equation}\label{mini}
\sup_{1\le \alpha_{1}(n),\ldots,\alpha_{n-1}(n)\le B^{dn}}\min\Big\{\alpha_{1}(n)^{-s},(\alpha_{1}(n)^{(1-s)}\alpha_{2}(n)^{-s})^{\frac{1}{2}},\ldots,
(\alpha_{n-2}(n)^{(1-s)}\alpha_{n-1}(n)^{-s})^{\frac{1}{n-1}},(\alpha_{n-1}(n)^{(1-s)}B^{-sdn})^{\frac{1}{n}}\Big\}
\end{equation}
is attained when all the terms are equal.
\end{proposition}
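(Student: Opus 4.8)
The plan is to set up the optimisation as a max-min problem in the logarithmic variables and show the optimum occurs on the ``all terms equal'' locus by a standard exchange/perturbation argument. First I would substitute $x_i = \log \alpha_i(n)$ for $1 \le i \le n-1$, together with the constant $C = \log B^{dn} = dn\log B$, so that the constraint becomes $0 \le x_1 \le x_2 \le \cdots \le x_{n-1} \le C$ — note that $\alpha_i(n)$ is increasing in $i$ since each $A_i(n) \ge 1$, so one may as well impose monotonicity — and the $n$ quantities inside the minimum become, after taking logarithms, the affine functions
\begin{align*}
L_1 &= -s\, x_1, \\
L_j &= \tfrac{1}{j}\bigl((1-s)x_{j-1} - s\, x_j\bigr) \quad (2 \le j \le n-1),\\
L_n &= \tfrac{1}{n}\bigl((1-s)x_{n-1} - sC\bigr).
\end{align*}
Maximising \eqref{mini} is the same as maximising $g(x) := \min_{1 \le j \le n} L_j(x)$ over the polytope $0 \le x_1 \le \cdots \le x_{n-1} \le C$, and the claim is that the maximiser satisfies $L_1 = L_2 = \cdots = L_n$.

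The key steps, in order: (i) Observe that $g$ is concave (a minimum of affine functions) and the feasible region is a compact convex polytope, so a maximiser $x^* = (x_1^*,\ldots,x_{n-1}^*)$ exists. (ii) Show that at the maximiser every $x_i^*$ lies strictly in the interior of its range, i.e. $0 < x_1^* < x_2^* < \cdots < x_{n-1}^* < C$ and in particular $x_1^* > 0$; this uses $s \ge 1/2 > 0$ (guaranteed by the last proposition of Section~2) and the sign pattern of the coefficients: if, say, some $x_i^*$ equalled its lower bound one could increase it slightly, which raises $L_{i+1}$ (coefficient $(1-s)/(i+1)$ of $x_i$, and if $s<1$ this is positive; the case $s \ge 1$ needs the trivial observation that then the dimension bound is vacuous, so WLOG $1/2 \le s < 1$) while only affecting $L_{i+1}$'s own term, and a symmetric argument handles the upper bound and the ordering constraints — so none of the polytope's boundary facets is active. (iii) At an interior maximiser of a concave function given as $\min_j L_j$, the subgradient-zero (KKT) condition forces $0$ to lie in the convex hull of the gradients $\{\nabla L_j : j \in \mathcal{I}(x^*)\}$ where $\mathcal{I}(x^*) = \{j : L_j(x^*) = g(x^*)\}$ is the active index set; but each coordinate direction $e_i$ appears with a \emph{positive} coefficient in exactly one $L_j$ (namely $L_{i+1}$, via the $(1-s)x_i$ term) and a \emph{negative} coefficient in exactly one $L_j$ (namely $L_i$, via the $-sx_i$ term, or $L_1$ when $i=1$), so for $0$ to be a convex combination of the active gradients, whenever $L_{i+1}$ is active $L_i$ must be active too. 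Chaining this from $j=1$ upward forces $\mathcal{I}(x^*) = \{1,2,\ldots,n\}$, i.e. all $n$ terms are equal. (iv) Conclude that the supremum in \eqref{mini} is attained at the unique point where $L_1 = \cdots = L_n$, which can then be solved explicitly by the cascade $\alpha_1^{-s} = (\alpha_1^{1-s}\alpha_2^{-s})^{1/2} = \cdots$, giving $\alpha_i(n)$ in closed form (a geometric-type recursion matching the $f_k$ in Theorem~\ref{HWXtheorem}).

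The main obstacle I anticipate is step (ii)–(iii): rigorously ruling out that the maximiser sits on a face of the polytope (so that the clean KKT/convex-hull argument applies) and then extracting ``all active'' from the sign structure. A cleaner alternative that avoids boundary case analysis is a direct \emph{greedy/exchange} argument: start from any feasible $x$, and show that if $L_p < L_q$ for some $p,q$ then one can perturb a single coordinate to raise $\min_j L_j$ strictly, unless all are equal — making ``all equal'' the unique fixed point of the improvement step and hence (by compactness) the maximiser. Concretely, if the minimum is attained only at indices in a proper subset, pick the smallest such index $j_0$; if $j_0 > 1$ then $L_{j_0}$ depends on $x_{j_0-1}$ with positive coefficient and $L_{j_0-1}$ (not minimal) depends on it with negative coefficient, so increasing $x_{j_0-1}$ by $\varepsilon$ raises $L_{j_0}$ while keeping $L_{j_0-1}$ above the minimum for small $\varepsilon$; if $j_0 = 1$, decrease $x_1$. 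Either way $\min_j L_j$ strictly increases, a contradiction. This exchange version is the one I would actually write up, as it sidesteps subgradient formalism and handles the endpoint $s = 1/2$ uniformly.
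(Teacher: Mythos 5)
Your overall framework --- passing to logarithms and treating \eqref{mini} as the maximisation of the concave function $g=\min_j L_j$ over a box --- is a genuinely different route from the paper's. The paper instead argues by induction on the number of terms: it freezes $\alpha_{n-1}(n)$, uses the sup--min interchange lemma (Lemma 2.12 of Huang--Wu--Xu, quoted just before the proposition) to pull the inner supremum inside the minimum, invokes the inductive hypothesis to collapse the first $n-1$ terms to the single quantity $\alpha_{n-1}(n)^{-h_{n-1}(s)}$, and finishes with the elementary fact that the maximum of the minimum of one decreasing and one increasing function of a single variable occurs where they cross. Your KKT observation in step (iii) does isolate the right combinatorial fact: each $x_i$ occurs with a negative coefficient only in $L_i$ and a positive one only in $L_{i+1}$, so stationarity forces $\tfrac{s}{i}\lambda_i=\tfrac{1-s}{i+1}\lambda_{i+1}$ for every $i$, whence all multipliers are positive and the active set is all of $\{1,\dots,n\}$.

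However, the version you commit to writing up --- the single-coordinate exchange --- has a genuine gap. If the active set $\mathcal{I}$ contains two consecutive indices, no single-coordinate move can strictly increase the minimum: raising $L_1$ requires decreasing $x_1$, which lowers $L_2$; raising $L_j$ for $j\ge 2$ requires either increasing $x_{j-1}$ (which lowers $L_{j-1}$) or decreasing $x_j$ (which lowers $L_{j+1}$). Concretely, if $\mathcal{I}=\{1,2\}$ your prescription for $j_0=1$ (``decrease $x_1$'') strictly \emph{decreases} the minimum, and your prescription for $j_0>1$ raises only $L_{j_0}$ while leaving the other active terms, and hence the minimum, unchanged. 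So ``all equal'' is not the unique configuration admitting no single-coordinate improvement: every downward-closed active set $\{1,\dots,m\}$ with $m<n$ is also a fixed point of your improvement step. Ruling these out needs a simultaneous perturbation of $x_1,\dots,x_m$ (for instance $v_j=-j$, which yields $\nabla L_j\cdot v=\tfrac{1}{j}\bigl(s(2j-1)-(j-1)\bigr)>0$ precisely because $s\ge \tfrac12$), and that in turn requires $x_j>0$ for $j\le m$ --- the interiority you flag in step (ii) but justify incorrectly there, since increasing $x_i$ does not ``only affect $L_{i+1}$''; it also lowers $L_i$. Until the interiority and the multi-coordinate improvement are supplied, the argument does not close; the paper's one-variable-at-a-time induction avoids both issues, and I would either repair the exchange along the lines above or adopt that inductive scheme.
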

Within the proof of this proposition we would need the following lemma from \cite[Lemma 2.12]{HuWuXu}.
\begin{lemma}[{\cite[Lemma 2.12]{HuWuXu}}]
Let $G$ be a subset of $\mathbb{R}^{n}$ and use $G_{x_n}$ to denote the section to the coordinate $x_n$.
\begin{enumerate}[\rm (1)]
\item Let $g(x_1,\ldots,x_n)$ be a function. Then
    $$\sup_{(x_1,\ldots,x_n)\in G}g(x_1,\ldots,x_n)=\sup_{x_n}\sup_{(x_1,\ldots,x_n-1)\in G_{x_n}}g(x_1,\ldots,x_n).$$
    \item $$\min\{a_1,\ldots,a_n\}=\min\{\min\{a_1,\ldots,a_{n-1}\},a_n\}.$$
    \item Let $a(x)$ be a function and $b$ a constant. Then
        $$\sup_{x\in G}\min\{a(x),b\}=\min\{\sup_{x\in G}a(x),b\}.$$
\end{enumerate}
\end{lemma}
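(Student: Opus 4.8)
The plan is to verify the three identities directly from the definitions of supremum and minimum; none of them involves continued fractions, and the whole argument is elementary order theory, so I would treat the lemma as a self-contained bookkeeping statement (it is in fact quoted from \cite[Lemma 2.12]{HuWuXu}). For part (1), I would begin from the decomposition $G=\bigcup_{x_n}\big(G_{x_n}\times\{x_n\}\big)$, the union being taken over all values $x_n$ that occur as the last coordinate of some point of $G$ (with $G_{x_n}=\emptyset$ otherwise). Then the value set $\{g(x_1,\dots,x_n):(x_1,\dots,x_n)\in G\}$ equals the union over $x_n$ of the sets $V_{x_n}:=\{g(x_1,\dots,x_{n-1},x_n):(x_1,\dots,x_{n-1})\in G_{x_n}\}$, and since the supremum of a union of subsets of $\R\cup\{\pm\infty\}$ is the supremum of their suprema (using the convention $\sup\emptyset=-\infty$, so empty slices are harmless), we obtain $\sup_{(x_1,\dots,x_n)\in G}g=\sup_{x_n}\sup_{(x_1,\dots,x_{n-1})\in G_{x_n}}g$, which is exactly the claim.

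Part (2) is the associativity of the binary $\min$ operation: since $\min$ of a finite set of reals is its least element and $\{a_1,\dots,a_n\}=\{a_1,\dots,a_{n-1}\}\cup\{a_n\}$, the least element of the whole set is the smaller of the least element of $\{a_1,\dots,a_{n-1}\}$ and $a_n$; alternatively one can give a one-line induction on $n$ with base case $n=2$. For part (3), I would set $S=\sup_{x\in G}a(x)$ and split into two cases. If $S\le b$, then $a(x)\le b$ for every $x\in G$, so $\min\{a(x),b\}=a(x)$ for all such $x$ and taking suprema gives $\sup_{x\in G}\min\{a(x),b\}=S=\min\{S,b\}$. If $S>b$, then $\min\{a(x),b\}\le b$ for every $x$, so the left-hand side is $\le b$; and since $b<S$ means $b$ is not an upper bound for $\{a(x):x\in G\}$, there is some $x_0\in G$ with $a(x_0)>b$, whence $\min\{a(x_0),b\}=b$ and the left-hand side is $\ge b$. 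Combining, $\sup_{x\in G}\min\{a(x),b\}=b=\min\{S,b\}$.

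There is no genuine obstacle here — the lemma is routine, and the main point for the applications in this paper is part (3), which is what lets one pull a constant $B^{dn}$-type bound out of the nested $\sup\min$ in Proposition \ref{pro1}. The only places deserving a moment's attention are the conventions for empty sections and infinite suprema in part (1), and in part (3) the (immediate) observation that $S>b$ forces some actual value $a(x_0)$ to exceed $b$ rather than merely approach it, which is just the statement that a number below the supremum is not an upper bound.
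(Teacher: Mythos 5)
Your proof is correct and complete; the paper itself does not prove this lemma, instead citing it directly from \cite[Lemma 2.12]{HuWuXu}, so there is no proof in the paper to compare against. Your elementary arguments (sup over a union for (1), associativity of min for (2), and the case split on $\sup_{x\in G}a(x)$ versus $b$ for (3), including the correct observation that a supremum strictly above $b$ is actually attained above $b$ by some point, not merely approached) are exactly the standard way to establish these facts and handle the needed edge cases with the $\sup\emptyset=-\infty$ convention.
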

Let $\{h_{\ell}\}_{\ell\ge 1}$ be a sequence such that following iterative formula holds, for each $\ell\ge 1$
$$h_{\ell}(s)=\frac{s h_{\ell-1}(s)}{1-s+\ell h_{\ell-1}(s)},\ h_{1}(s)=s.$$
\begin{proof}[Proof of Proposition \ref{pro1}]


We first look at the case when there are only two terms, then the supremum can be expressed as follows. Given $1\le \alpha_{2}(n)\le B^{dn}$, consider
\begin{align*}
\sup_{1\le \alpha_{1}(n)\le B^{dn}}\min\left\{\alpha_{1}(n)^{-s},(\alpha_{1}(n)^{(1-s)}\alpha_2(n)^{-s})^{\frac{1}{2}}\right\}.
\end{align*}
As a function of $\alpha_{1}(n)$, the term $\alpha_{1}(n)^{-s}$ is decreasing and $\alpha_{1}(n)^{(1-s)}$ is increasing, so the supremum is attained when both the terms are equal. More specifically, it is equal to $\alpha_{2}(n)^{-h_{2}(s)}$.

Next, assume that the proposition holds for the first $n-1$ items. Then,
\begin{align*}
&\sup_{1\le \alpha_{1}(n),\ldots,\alpha_{n-1}(n)\le B^{dn}}\min\Big\{\alpha_{1}(n)^{-s},\ldots,
(\alpha_{n-2}(n)^{(1-s)}\alpha_{n-1}(n)^{-s})^{\frac{1}{n-1}},(\alpha_{n-1}(n)^{1-s}B^{-sdn})^{\frac{1}{n}}\Big\}\\
=&\sup_{1\le \alpha_{n-1}(n)\le B^{dn}}\sup_{1\le \alpha_{1}(n),\ldots,\alpha_{n-2}(n)\le B^{dn}}\min\Big\{\min\Big\{\alpha_{1}(n)^{-s},\ldots,
(\alpha_{n-2}(n)^{(1-s)}\alpha_{n-1}(n)^{-s})^{\frac{1}{n-1}}\Big\},(\alpha_{n-1}(n)^{1-s}B^{-sdn} )^{\frac{1}{n}}\Big\}\\
=&\sup_{1\le \alpha_{n-1}(n)\le B^{dn}}\min\Big\{\sup_{1\le \alpha_{1}(n),\ldots,\alpha_{n-2}(n)\le B^{dn}}\min\Big\{\alpha_{1}(n)^{-s},\ldots,
(\alpha_{n-2}(n)^{(1-s)}\alpha_{n-1}(n)^{-s})^{\frac{1}{n-1}}\Big\},(\alpha_{n-1}(n)^{1-s}B^{-sdn} )^{\frac{1}{n}}\Big\}.
\end{align*}
By induction, the supremum
$$\sup_{1\le \alpha_{1}(n),\ldots,\alpha_{n-2}(n)\le B^{dn}}\min\Big\{\alpha_{1}(n)^{-s},\ldots,
(\alpha_{n-2}(n)^{(1-s)}\alpha_{n-1}(n)^{-s})^{\frac{1}{n-1}}\Big\}$$
 is attained when all terms are equal to $\alpha_{n-1}(n)^{-h_{n-1}(s)}$. Note that $\alpha_{n-1}(n)^{-h_{n-1}(s)}$ is decreasing with $a_{n-1}(n)$, but $(\alpha_{n-1}(n)^{1-s}B^{-sdn} )^{\frac{1}{n}}$ is increasing, so that the supremum is attained when all the terms are equal. This completes the proof of the proposition.
\end{proof}

For each $n\ge 2$ and $1\le i\le n-1$, substitute $A_{i}(n)$ into (\ref{mini}) by $A_{i}(n)=\alpha_{i}(n)/\alpha_{i-1}(n)$, and then equate each of them in (\ref{mini}), one induces the following relations. Fix $n\ge 2$, then
\begin{itemize}
\item For any $1\le k\le n-1$,
\begin{equation}\label{cor1}
A_{1}(n)\cdots A_{k}(n)=A_{1}(n)^{-sk}A_{k}(n)^{1-s}(A_{1}(n)\cdots A_{k}(n))^{2s}.
\end{equation}
Indeed, when all terms in the supremum are equal, for any $1\le k\le n-1$, we have
\begin{align*}
&\alpha_{k-1}(n)^{1-s}\alpha_{k}(n)^{-s}=\alpha_{1}(n)^{-sk}\\
\Longleftrightarrow\quad  &(A_1(n)\cdots A_{k-1}(n))^{1-s}(A_1(n)\cdots A_{k}(n))^{-s}=A_{1}(n)^{-sk}\\
\Longleftrightarrow \quad &(A_1(n)\cdots A_{k}(n))^{1-2s}=A_{1}(n)^{-sk}A_{k}(n)^{1-s}.
\end{align*}
\item Each $A_{k}(n)$ for any $1\le k<n-1$ satisfies the following recursive relation
\begin{align}\label{cor2}
s\log A_{k+1}(n)&=s\log A_1(n)+(1-s)\log A_{k}(n).
\end{align}
This is followed by making a difference between the following formulas
\begin{align*}
(1-2s)\log (A_1(n)\cdots A_{k}(n))&=-sk\log A_1(n)+(1-s)\log A_k(n),\\
(1-2s)\log (A_1(n)\cdots A_{k+1}(n))&=-s(k+1)\log A_1(n)+(1-s)\log A_{k+1}(n),
\end{align*}
where $1<k<n-1$.
\item Thus, by (\ref{cor2}), we have
\begin{align}\label{cor3}
\log A_{k}(n)=\left(\frac{1}{2-s^{-1}}-\frac{1}{2-s^{-1}}\left(\frac{1-s}{s}\right)^{k}\right)\log A_{1}(n),
\end{align}
which implies that $A_{1}(n)\le A_{2}(n)\le\ldots\le A_{n-1}(n)$.
\medskip

\item Finally, by taking the limit $n\to\infty$, we conclude
\begin{equation}\label{cor4}
\lim_{n\rightarrow\infty}\log A_{1}(n)=(2-s^{-1})d\log B.
\end{equation}
Letting $A_{n}(n): =B^{dn}/(A_1(n)\cdots A_{n-1}(n))$ we can get the definition of $\alpha_{n}(n)$. Replacing $B^{dn}$ for $A_1(n)\ldots A_n(n)$ in the supremum, we find that (\ref{cor1})--(\ref{cor3}) hold for $A_n(n)$. Thereafter, summing (\ref{cor3}) through for $k=1$ to $n$ and then taking the limit, we obtain the desired result. 
\end{itemize}
The last limit implies that $A_{1}(n)$ is almost a constant when $n$ is large enough. This constant determines the dimension of $E_B$ since the $n$ potential covers are equivalent when $\{A_{i}(n)\}_{1\le i\le n-1}$ is given as above.


\section{Lower bound of $\dim_\HH E_B$}
Let $M,N>2$ be large enough integers such that $s<s_B(M,N)$. For any $0<\varepsilon<1/f(N)$, an estimate of $A_1(n)$ is as follows,
\begin{align}\label{A1}
B^{(2s-1-s\varepsilon)d }\le A_1(n)^{s}\le B^{(2s-1+s\varepsilon)d}\textrm{ for large enough } n.
\end{align}
 For each $k\ge 1$, choose a rapidly increasing sequence of integers $\{r_{k}\}_{k\ge 1}$ such that $r_k^2N\ll \varepsilon r_{k+1}$ and $r_1\gg f(N)^2$, then take $n_k$ such that $f(n_k)-1=r_kf(N)$, where $n_{0}=0$. It can be easily verified that $n_kf(n_{k})\ll \varepsilon f(n_{k+1})$. Let $v_k$ be the largest integer such that $n_{k}f(n_{k})+v_{k}f(N)\le f(n_{k+1})-1$ and denote $m_{k}=n_{k}f(n_{k})+v_{k}f(N)$ for the ease of notation.

Let
\begin{align*}
F:=\Big\{x\in\left[0,1\right)&: A_{i}(n_k)^{n_k}\le a_{if(n_k)}(x)\le 2A_{i}(n_k)^{n_k}, 1\le i\le n_k,\\
&\phantom{=\;\;}a_{m_{k}+1}(x)=\cdots=a_{f(n_{k+1})-1}(x)=2 \textrm { for all }k\ge 1 ;1\le a_{n}\left(x\right)\le M\textrm{ for other cases}\Big\}.
\end{align*}
To simplify the notation, we will make use of a symbolic space described as follows. For each $n\ge 1$,
\begin{align*}
D_n=\Big\{\left(a_{1},\ldots,a_{n}\right)\in\mathbb{N}^{n}&:A_{i}(n_k)^{n_k}\le a_{if(n_k)}\le 2A_{i}(n_k)^{n_k}, 1\le i\le n_k,\\
&\phantom{=\;\;}a_{m_k+1}=\cdots=a_{\min\{f(n_{k})-1,n\}}=2 \textrm { for all }k\ge 1 ,1\le a_{n}\le M\textrm{ for other cases}\Big\}
\end{align*}
and $$D=\bigcup_{n=0}^{\infty}D_{n}\quad \left(D_{0}:=\emptyset\right).$$
Write $I_{0}=\left[0,1\right]$. For any $n\ge 1$ and $\left(a_{1},\ldots,a_{n}\right)\in D_{n}$, we call $I_n\left(a_{1},\ldots,a_{n}\right)$ a $\textit{basic interval of order n}$ and
\begin{equation}\label{J}
J_n\left(a_{1},\ldots,a_{n}\right)=\bigcup_{a_{n+1}}\textrm{cl} I_{n+1}\left(a_{1},\ldots,a_{n+1}\right)
\end{equation}
a $\textit{fundamental interval of order n}$, where the union in $\left(\ref{J}\right)$ is taken over all $a_{n+1}$ such that $\left(a_{1},\ldots,a_{n},a_{n+1}\right)\in~D_{n+1}$.
It is clear that
\begin{equation}
F=\bigcap_{n\ge 1}\bigcup_{\left(a_{1},\ldots,a_{n}\right)\in D_{n}}J_{n}\left(a_{1},\ldots,a_{n}\right).
\end{equation}
We will use $\mathcal{U}$ to denote the following collection of finite words of length $f(N)$:
$$\mathcal{U}=\{w=(\sigma_1,\ldots,\sigma_{f(N)}):1\le\sigma_i\le M,\ 1\le i\le f(N)\}.$$
In the following, we always use $w$ to denote an element from $\mathcal{U}$.
\subsection{Lengths and gaps estimation}
Next, we calculate the lengths of fundamental intervals, which will be split into several cases.
\begin{itemize}
\item When $n=jf(n_k)-1$ for some $k\ge 1$ and $1\le j\le n_{k}$, we have
\begin{align*}
J_{n}\left(a_{1},\ldots,a_{n}\right)=\bigcup_{A_{j}(n_k)^{n_{k}}\le a_{jf(n_{k})}\le 2A_{j}(n_k)^{n_{k}}}I_{n+1}\left(a_{1},\ldots,a_{n},a_{jf(n_k)}\right).
\end{align*}
Therefore,
\begin{align*}
|J_{n}\left(a_{1},\ldots,a_{n}\right)|&=\frac{A_{j}(n_k)^{n_{k}}+1}{((2A_{j}(n_k)^{n_{k}}+1)q_{n}+q_{n-1})(A_{j}(n_k)^{n_{k}}q_{n}+q_{n-1})},
\end{align*}
and
\begin{equation}\label{J1}
\frac{1}{2^{5}A_{j}(n_k)^{n_{k}}q_{n}^{2}}\le |J_{n}(a_{1},\ldots,a_{n})|\le\frac{1}{A_{j}(n_k)^{n_{k}}q_{n}^{2}}.
\end{equation}


\item When $m_k\le n\le f(n_{k})-2$ for some $k\ge 1$, we have
\begin{align*}
J_{n}\left(a_{1},\ldots,a_{n}\right)= I_{n+1}\left(a_{1},\ldots,a_{m_k},2,\ldots,2\right).
\end{align*}
where the number of partial quotients that are equal to 2 is $n-m_k$.
Therefore,
\begin{align*}
\left|J_{n}\left(a_{1},\ldots,a_{n}\right)\right|&=\frac{1}{(2q_n+q_{n-1})(3q_n+q_{n-1})}
\end{align*}
and
\begin{equation}\label{J3}
\frac{1}{2^{4(n-m_k)+5}q_{m_k}^{2}}\le\frac{1}{2^{5}q_{n}^{2}}\le\left|J_{n}\left(a_{1},\ldots,a_{n}\right)\right|\le\frac{1}{2^{2}q_{n}^{2}}\le \frac{1}{2^{2(n-m_k)+2}q_{m_k}^{2}}.
\end{equation}

\item For other cases, we have
\begin{align*}
J_{n}\left(a_{1},\ldots,a_{n}\right)=\bigcup_{1\le a_{n+1}\le M}I_{n+1}\left(a_{1},\ldots, a_{n},a_{n+1}\right).
\end{align*}
Therefore,
\begin{align*}
\left|J_{n}\left(a_{1},\ldots,a_{n}\right)\right|&=\frac{M}{\left(\left(M+1\right)q_{n}+q_{n-1}\right)\left(q_{n}+q_{n-1}\right)}.
\end{align*}
and
\begin{equation}\label{J4}
\frac{1}{6q_{n}^{2}}\le\left|J_{n}\left(a_{1},\ldots,a_{n}\right)\right|\le\frac{1}{q_{n}^{2}}
\end{equation}
\end{itemize}

The gaps between the fundamental intervals can be observed from the position of the intervals in Proposition \ref{range}. Denote $g_n(a_1,\ldots,a_n)$ by the gap between $J_n(a_1,\ldots,a_n)$ and other fundamental intervals of order $n$. We will give a brief explanation that the gap is larger than a constant multiple of $J_n(a_1,\ldots,a_n)$. A detailed proof can be found in \cite{WaWu08}.

When $n=jf(n_k)-1$ and $m_k\le n\le f(n_{k})-2$ for some $k\ge 1$ and $1\le j\le n_{k}$, $J_n$ lies in the middle of $I_n$, then the gap must be larger than the distance between right/left endpoint of $J_n$ and $I_n$. However, this distance only differs by a constant multiple from $|J_n|$. As for other cases, $J_n$ lies in the rightmost/leftmost of $I_n$ since $a_{n+1}$ goes through $1$ to $M$. So the gap is larger than the distance between left/right endpoint of $J_n$ and $I_n$. It implies that
\begin{equation}\label{gap}
g_n(a_1,\ldots,a_n)\ge \frac{1}{M}|J_n(a_1,\ldots,a_n)|.
\end{equation}

\subsection{Mass distribution}

Next, we define a measure $\mu$ supported on $F$. Recall the definition of $\mathcal{U}$. Every element $x\in F$ can be written as the form
\begin{align*}
x=[w_1^{(1,0)},\ldots,w_{r_1}^{(1,0)},&a_{f(n_1)},\ldots,w_1^{(1,n_1-1)},\ldots,w_{r_1}^{(1,n_1-1)},a_{n_1f(n_1)},w_1^{(1,n_1)},\ldots,w_{v_1}^{(1,n_1)},2,\ldots,2\\
&a_{f(n_k)},\ldots,w_1^{(k,n_k-1)},\ldots,w_{r_k}^{(k,n_k-1)},a_{n_kf(n_k)},w_1^{(k,n_k)},\ldots,w_{v_k}^{(k,n_k)},2,\ldots,2,\ldots]
\end{align*}
where $w_{i}^{(k,j)}\in\mathcal{U}$ for all $1\le i\le r_k,0\le j\le n_k,k\ge 1$ and $1\le i\le v_k,j=n_k,k\ge 1$, and
$$A_i(n_{\ell})^{n_{\ell}}\le a_{f(n_{\ell})}\le 2A_i(n_{\ell})^{n_{\ell}} \ \textrm{for all} \ \ell\ge 1.$$
For each $k\ge 1$ and $1\le i\le n_k$, the mass on fundamental intervals of order $if(n_k)$ depends on the number of possible values of $a_{if(n_k)}$ in the subset. In other words, we use counting measure on these fundamental intervals. For intervals of order $n$ with $m_k+1\le n\le f(n_{k+1})-1$, the mass is the  same as the interval of order $n-1$ which contains $J_n$. For simplicity, throughout this paper, we write fundamental interval of order $n$ as $J_n$ rather than $J_n(a_1,\ldots,a_n)$ without ambiguity. Furthermore, uniform distribution and consistency of measure are used to define on the others. More precisely, for integers $N,M>2$ that we chose above, denote
\begin{align*}
u_{N}:=\sum_{w\in\mathcal{U}}\frac{1}{q_{f(N)}^{2s}(w)B^{\left(2s-1\right)dN}}.
\end{align*}
For any $s<s_{B}\left(M,N\right)$, one has $u_N>1$. We define mass by induction. 
\begin{itemize}
\item Let $n\le f(n_2)-1$,
\begin{itemize}
\item For each $1\le i\le r_1$, define
\begin{align*}
\mu(J_{if(N)})=\prod_{t=1}^{i}\frac{1}{u_{N}}\cdot\frac{1}{q_{f(N)}^{2s}(w_t^{(1,0)})B^{\left(2s-1\right)dN}};
\end{align*}

\item For each $1\le j\le n_1$, define
\begin{align*}
\mu(J_{jf(n_{1})})&=\frac{1}{A_{j}(n_1)^{n_1}}\prod_{i=1}^{j-1}\frac{1}{A_{i}(n_1)^{n_{1}}}\prod_{t=1}^{r_{1}}\frac{1}{u_{N}}\cdot\frac{1}{q_{f(N)}^{2s}(w_t^{(1,i)})B^{(2s-1)dN}}\cdot\mu(J_{f(n_1)-1}); 
\end{align*}

\item For each $1\le j\le n_{1}$, when $n=jf(n_{1})+if(N)$, define
\begin{align*}
\mu(J_{n})&=\mu(J_{jf(n_{1})}) \cdot\prod_{t=1}^{i}\frac{1}{u_N}\cdot\frac{1}{q_{f(N)}^{2s}(w_t^{(1,j)})B^{(2s-1)dN}} \ \textrm{for all } 1\le i\le r_1, 1\le j\le n_1-1;\\
\mu(J_{n})&=\mu(J_{n_1f(n_{1})}) \cdot\prod_{t=1}^{i}\frac{1}{u_N}\cdot\frac{1}{q_{f(N)}^{2s}(w_t^{(1,n_1)})B^{(2s-1)dN}}\ \textrm{for all } 1\le i\le v_1, j=n_1;
\end{align*}

\item For each $m_{1}+1\le n\le f(n_{2})-1$, define
\begin{align*}
\mu(J_{n})=\mu(J_{m_{1}});
\end{align*}

\item The mass of other fundamental intervals of order less than $f(n_2)-1$ is designed by the consistency of a measure. For each integer $n$ with $jf(n_1)+(i-1)f(N)<n<jf(n_1)+i f(N)$ for some $0\le j\le n_1$ and $1\le i\le r_1$, define
    \begin{align*}
    \mu(J_n)=\sum_{J_{jf(n_1)+i f(N)}\subseteq\ J_n}\mu(J_{jf(n_1)+i f(N)});
    \end{align*}
\end{itemize}
\item Let $f(n_{k})-1<n\le f(n_{k+1})-1$ for some $k\ge 1$, assume the mass of all the fundamental intervals of order less than $f(n_{k})-1$ has been defined. Then we define mass as follows.
\begin{itemize}
\item For each $1\le j\le n_{k}$, define
\begin{align*}
\mu(J_{jf(n_{k})})&=\frac{1}{A_{j}(n_k)^{n_k}}\prod_{i=1}^{j-1}\frac{1}{A_{i}(n_k)^{n_{k}}}\prod_{t=1}^{r_{k}}\frac{1}{u_{N}}\cdot\frac{1}{q_{f(N)}^{2s}(w_t^{(k,i)})B^{(2s-1)dN}}\cdot\mu(J_{f(n_k)-1}); 
\end{align*}


\item For each $1\le j\le n_{k}$, when $n=jf(n_{k})+if(N)$, define
\begin{align*}
\mu(J_{n})&=\mu(J_{jf(n_{k})}) \cdot\prod_{t=1}^{i}\frac{1}{u_N}\cdot\frac{1}{q_{f(N)}^{2s}(w_t^{(k,j)})B^{(2s-1)dN}}\ \textrm{for all } 1\le i\le r_k, 1\le j\le n_k-1;\\
\mu(J_{n})&=\mu(J_{n_kf(n_{k})}) \cdot\prod_{t=1}^{i}\frac{1}{u_N}\cdot\frac{1}{q_{f(N)}^{2s}(w_t^{(k,n_k)})B^{(2s-1)dN}}\ \textrm{for all } 1\le i\le v_k, j=n_k;
\end{align*}
\item For each $m_{k}+1\le n\le f(n_{k+1})-1$, define
\begin{align*}
\mu(J_{n})=\mu(J_{m_{k}});
\end{align*}

\item The mass of the fundamental intervals of other orders is defined as the summation of the mass of its offsprings to ensure the consistency of measure.

  \end{itemize}
\end{itemize}

\subsection{H\"older exponent of the measure on fundamental intervals}
Following these, we estimate the mass on fundamental intervals that will frequently use properties \eqref{c1}--\eqref{qE} of $\{q_n\}_{n\ge 1}$ in the process.
\begin{itemize}
\item When $n=if(N)$ for some $1\le i< r_{1}$, by the estimation of length of $J_n$ in (\ref{J4}), we have
\begin{equation}\label{E1}
\begin{split}
\mu(J_{if(N)})&\le\prod_{t=1}^{i}\frac{1}{q_{f(N)}^{2s}(w_t^{(1,0)})B^{(2s-1)dN}}\le 2^{2\left(i-1\right)}\cdot\frac{1}{q_{if(N)}^{2s}}\\
&\le\left(\frac{1}{q_{if(N)}^{2}}\right)^{s-\frac{2}{f(N)}}\le 6 |J_{if(N)}|^{s-\frac{2}{f(N)}},
\end{split}
\end{equation}
where the third inequality holds by following
$$q_{if(N)}^2\ge 2^{if(N)-1}\Longrightarrow q_{if(N)}^{2\cdot\frac{2}{f(N)}}\ge 2^{2(i-1)}.$$

\item When $n=f(n_1)-1$, this case will be somewhat different from the previous one.
\begin{equation*}
\begin{split}
\mu(J_{f(n_1)-1})&\le\prod_{t=1}^{r_1}\frac{1}{q_{f(N)}^{2s}(w_t^{(1,0)})B^{(2s-1)dN}}\le 2^{2\left(r_1-1\right)}\cdot\frac{1}{q_{f(n_1)-1}^{2s}B^{(2s-1)r_1dN}}\\
&\le \left(\frac{1}{q_{f(n_1)-1}^{2}}\right)^{s-\frac{2}{f(N)}}\cdot \frac{1}{B^{(2s-1)r_1dN}}.
\end{split}
\end{equation*}
Recall inequalities (\ref{J1}) and (\ref{A1}) and note that when $N$ is large enough it implies that $n_1$ is also large. So, we have
$$|J_{f(n_1)-1}|^s\ge \frac{1}{2^5 A_1(n_1)^{sn_1}q_{f(n_1)-1}^{2s}}\ge\frac{1}{2^{5} B^{dn_1(2s-1+s\varepsilon)}q_{f(n_1)-1}^{2s}}.$$
Thus,
\begin{equation}\label{E2}
\begin{split}
\mu(J_{f(n_1)-1})&\le \left(\frac{1}{q_{f(n_1)-1}^{2}}\right)^{s-\frac{2}{f(N)}}\cdot \frac{1}{B^{(2s-1)(n_1d-r_1t-1)}}\\
&\le 2^5B^{sn_1d\varepsilon+(2s-1)(r_1t+1)}\cdot|J_{f(n_1)-1}|^{s-\frac{2}{f(N)}}\\
&\ll 2^5|J_{f(n_1)-1}|^{s-\frac{2}{f(N)}-\frac{6(t+1)}{f(N)}\cdot\frac{\log B}{\log 2}}.
\end{split}
\end{equation}
The last inequality holds since, when $N$ is large enough, we have
$$B^{sn_1d\varepsilon+2(2s-1)r_1t}\le B^{n_1d\frac{1}{f(N)}+2\frac{f(n_1)-1}{f(N)}t}= B^{\frac{(2t+1)dn_1+2t^2-2t}{f(N)}}\le 2^{\frac{\log B}{\log 2}\cdot\frac{3(t+1)dn_1}{f(N)}}\textrm{ \ and \ } q_{f(n_1)-1}^2\ge 2^{\frac{dn_1}{2}}.$$
\item When $n=jf(n_1)$ for some $1\le j\le n_1$, we have
\begin{align*}
\mu(J_{jf(n_{1})})&\le\frac{1}{A_{j}(n_1)^{n_1}}\prod_{i=1}^{j-1}\frac{1}{A_{i}(n_1)^{n_{1}}}\cdot\prod_{t=1}^{r_{1}}\frac{1}{q_{f(N)}^{2s}(w_t^{(1,i)})B^{(2s-1)dN}}\cdot\mu(J_{f(n_1)-1})\\
&\le \frac{1}{(A_1(n_1)\cdots A_j(n_1))^{n_1}}\prod_{i=0}^{j-1}\left(\frac{1}{q_{f(n_1)-1}^2(w_1^{(1,i)},\ldots,w_{r_1}^{(1,i)})}\right)^{s-\frac{2}{f(N)}}\cdot \frac{1}{B^{(2s-1)jdr_1N}}.
\end{align*}
Observe that the length of $J_{jf(n_{1})}$ is as follows,
\begin{align*}
|J_{jf(n_{1})}|&\ge \frac{1}{6q_{jf(n_1)}^{2}}\ge \frac{1}{6\cdot 2^{6j-2}(A_1(n_1)\cdots A_j(n_1))^{2n_1}}\prod_{i=0}^{j-1}\left(\frac{1}{q_{f(n_1)-1}^2(w_1^{(1,i)},\ldots,w_{r_1}^{(1,i)})}\right).
\end{align*}
Furthermore, we will use the  following inequalities. For any $\ell\ge 1$,
\begin{equation}\label{E31}
\begin{split}
q_{f(n_{\ell})-1}^2\ge 2^{f(n_{\ell})-2}\ge 2^{\frac{f(N)}{2}}\quad&\Longrightarrow\quad\prod_{i=0}^{j-1}q_{f(n_{\ell})-1}^2(w_1^{(\ell,i)},\ldots,w_{r_1}^{(\ell,i)})\ge 2^{\frac{jf(N)}{2}}\\
&\Longrightarrow\quad \prod_{i=0}^{j-1}q_{f(n_{\ell})-1}^{2\cdot\frac{12}{f(N)}}(w_1^{(\ell,i)},\ldots,w_{r_1}^{(\ell,i)})\ge 2^{6j},
\end{split}
\end{equation}
and
\begin{equation}\label{E32}
\begin{split}
q_{f(n_{\ell})-1}^2\ge 2^{dn_{\ell}+t-2}
\quad&\Longrightarrow\quad\prod_{i=0}^{j-1}q_{f(n_{\ell})-1}^{2\cdot\frac{1}{f(N)}}(w_1^{(\ell,i)},\ldots,w_{r_1}^{(\ell,i)})\ge 2^{\frac{dn_{\ell}j}{2f(N)}}\\
&\Longrightarrow\quad \prod_{i=0}^{j-1}q_{f(n_{\ell})-1}^{2\frac{\log B}{\log 2}\cdot\frac{6(t+1)}{f(N)}}(w_1^{(\ell,i)},\ldots,w_{r_1}^{(\ell,i)})\ge 2^{\frac{\log B}{\log 2}\cdot\frac{3dn_{\ell}j(t+1)}{f(N)}}.
\end{split}
\end{equation}
Thus, combined with (\ref{cor1}) and (\ref{A1}), we have  
\begin{equation}\label{E3}
\begin{split}
\mu(J_{jf(n_{1})})&\le \frac{A_1(n_1)^{sjn_1}}{A_{j}(n_1)^{(1-s)n_1}(A_1(n_1)\cdots A_j(n_1))^{2sn_1}}\cdot \prod_{i=0}^{j-1}\left(\frac{1}{q_{f(n_1)-1}^2(w_1^{(1,i)},\ldots,w_{r_1}^{(1,i)})}\right)^{s-\frac{2}{f(N)}}\frac{1}{B^{(2s-1)jdr_1N}}\\
&\le\frac{B^{jn_1(2s-1+s\varepsilon)d}}{(A_1(n_1)\cdots A_j(n_1))^{2sn_1}B^{(2s-1)jdr_1N}}\cdot\prod_{i=0}^{j-1}\left(\frac{1}{q_{f(n_1)-1}^2(w_1^{(1,i)},\ldots,w_{r_1}^{(1,i)})}\right)^{s-\frac{2}{f(N)}}\\
&\le6\cdot2^{6j-2}B^{jdn_1(2s-1+s\varepsilon)-(2s-1)j(dn_1+t-1-r_1t)}|J_{jf(n_1)}|^{s-\frac{2}{f(N)}}\\
&\le 6\cdot2^{6j-2}2^{\frac{\log B}{\log 2}\frac{3(t+1)dn_1j}{f(N)}}|J_{jf(n_1)}|^{s-\frac{2}{f(N)}}\\
&\le 6|J_{jf(n_1)}|^{s-\frac{15}{f(N)}-\frac{\log B}{\log 2}\cdot\frac{6(t+1)}{f(N)}}.
\end{split}
\end{equation}


\item When $n=jf(n_1)-1$ for some $1< j\le n_1$, recall the length of $J_{jf(n_1)-1}$ which is
\begin{align*}
|J_{jf(n_1)-1}|&\ge\frac{1}{2^{5}A_{j}(n_1)^{n_{1}}q_{jf(n_1)-1}^{2}}\\
&\ge\frac{1}{2^{6j-1}A_{j}(n_1)^{n_1}(A_1(n_1)\cdots A_{j-1}(n_1))^{2n_1}}\prod_{i=0}^{j-1}\left(\frac{1}{q_{f(n_1)-1}^2(w_1^{(1,i)},\ldots,w_{r_1}^{(1,i)})}\right).
\end{align*}
For any $1<j\le n_1$, by (\ref{cor2}), one has
$$A_{j}(n_1)^s=A_{1}(n_1)^sA_{j-1}(n_1)^{1-s}.$$
The above equality when combined with (\ref{cor1}) and (\ref{A1}), yields
\begin{equation}\label{E5}
\begin{split}
\mu(J_{jf(n_1)-1})&\le\frac{1}{(A_1(n_1)\cdots A_{j-1}(n_1))^{n_1}}\cdot \prod_{i=0}^{j-1}\left(\frac{1}{q_{f(n_1)-1}^2(w_1^{(1,i)},\ldots,w_{r_1}^{(1,i)})}\right)^{s-\frac{2}{f(N)}}
\frac{1}{B^{(2s-1)jdr_1N}}\\
&\le\frac{A_1(n_1)^{s(j-1)n_1}}{A_{j-1}(n_1)^{(1-s)n_1}(A_1(n_1)\cdots A_{j-1}(n_1))^{2sn_1}}\\
&\quad\quad\quad\quad\quad\quad\quad\cdot\frac{1}{B^{(2s-1)j(dn_1+t-1-r_1t)}}\cdot\prod_{i=0}^{j-1}\left(\frac{1}{q_{f(n_1)-1}^2(w_1^{(1,i)},\ldots,w_{r_1}^{(1,i)})}\right)^{s-\frac{2}{f(N)}}\\
&\le2^{6j-1}\frac{A_1(n_1)^{s(j-1)n_1}A_j(n_1)^{sn_1}B^{j(r_1t-t+1)}}{A_{j-1}(n_1)^{(1-s)n_1}B^{(2s-1)jdn_1}} |J_{jf(n_1)-1}|^{s-\frac{2}{f(N)}}\\
&\le2^{6j-1}\frac{A_1(n_1)^{sjn_1}B^{j(r_1t-t+1)}}{B^{(2s-1)jdn_1}}|J_{jf(n_1)-1}|^{s-\frac{2}{f(N)}}\\
&\le2^{6j-1}B^{\frac{3d(t+1)jn_1}{f(N)}}|J_{jf(n_1)-1}|^{s-\frac{2}{f(N)}}\\
&\le|J_{jf(n_1)-1}|^{s-\frac{15}{f(N)}-\frac{\log B}{\log 2}\cdot\frac{6(t+1)}{f(N)}},
\end{split}
\end{equation}
where the last inequality holds by (\ref{E31}) and (\ref{E32}).


\item When $n=jf(n_1)+if(N)$ for some $1\le j< n_1$ and $1\le i<r_1$, we have
\begin{equation}\label{E7}
\begin{split}
\mu(J_{n})&\le \prod_{t=1}^{i}\frac{1}{q_{f(N)}^{2s}(w_t^{(1,j)})B^{(2s-1)dN}}\cdot\mu(J_{jf(n_1)})\\
&\le \left(\frac{1}{q_{if(N)}^2(w_1^{(1,j)},\ldots,w_{i}^{(1,j)})}\right)^{s-\frac{2}{f(N)}}\cdot\left(\frac{1}{q_{jf(n_1)}^2}\right)^{s-\frac{15}{f(N)}-\frac{\log B}{\log 2}\cdot\frac{6(t+1)}{f(N)}}\\
&\le2\left(\frac{1}{q_{n}^2}\right)^{s-\frac{15}{f(N)}-\frac{\log B}{\log 2}\cdot\frac{6(t+1)}{f(N)}}\\
&\le 12|J_n|^{s-\frac{15}{f(N)}-\frac{\log B}{\log 2}\cdot\frac{6(t+1)}{f(N)}}.
\end{split}
\end{equation}
When $n=n_1f(n_1)+if(N)$ with $1\le i\le v_1$, the process is similar and the H\"older exponent is the same. We omit the details.

\item When $m_1+1\le n< f(n_2)-1$,  note that $n-m_1$ is strictly less than $f(N)$ since $v_1$ is the largest number that we cannot insert into $f(N)$ partial quotients with index between $n_1f(n_1)$ and $f(n_2)-1$. We will use this fact below. Firstly,  observe that
    $$n-1\ge n_1f(n_1)-1\ge\frac{1}{2}n_1r_1f(N)\ \textrm{ and }\ 2d(t+1)n_1\ge dn_1+t-1=r_1f(N)\ge f(N),$$
which imply that
$$n-1\ge\frac{f(N)^2}{4d(t+1)}.$$
Then
$$q_n^{2\cdot\frac{16d(t+1)}{f(N)}}\ge 2^{\frac{16d(t+1)}{f(N)}(n-1)}\ge 2^{4f(N)}\ge 2^{4(n-m_1)}.$$
By using this inequality, we have
\begin{equation}\label{E8}
\begin{split}
\mu(J_{n})&=\mu(J_{m_1})\le \prod_{t=1}^{v_1}\frac{1}{q_{f(N)}^{2s}(w_t^{(1,n_1)})B^{(2s-1)dN}}\cdot\mu(J_{n_1f(n_1)})\\
&\le \left(\frac{1}{q_{v_1f(N)}^2(w_1^{(1,n_1)},\ldots,w_{v_1}^{(1,n_1)})}\right)^{s-\frac{2}{f(N)}}\cdot\left(\frac{1}{q_{n_1f(n_1)}^2}\right)^{s-\frac{15}{f(N)}-\frac{\log B}{\log 2}\cdot\frac{6(t+1)}{f(N)}}\\
&\le2\left(\frac{1}{q_{m_1}^2}\right)^{s-\frac{15}{f(N)}-\frac{\log B}{\log 2}\cdot\frac{6(t+1)}{f(N)}}\\
&\le2^{4(n-m_1)+6}\left(\frac{1}{q_{n}^2}\right)^{s-\frac{15}{f(N)}-\frac{\log B}{\log 2}\cdot\frac{6(t+1)}{f(N)}}\\
&\le 6\cdot2^6|J_n|^{s-\frac{15}{f(N)}-\frac{\log B}{\log 2}\cdot\frac{6(t+1)}{f(N)}-\frac{16d(t+1)}{f(N)}}.
\end{split}
\end{equation}

\item For the remaining cases when $n<f(n_2)-1$, one case is that there are $1\le i\le r_1$ and $0\le j< n_1$ such that $$jf(n_1)+(i-1)f(N)<n<jf(n_1)+i f(N).$$ Next we compare the length of $J_n$ with the length of  $J_{jf(n_{1})+(i-1)f(N)}$  as
\begin{align*}
\frac{\left|J_{n}\right|}{\left|J_{jf(n_1)+(i-1)f(N)+1}\right|}&\ge \frac{1}{6}\frac{q_{jf(n_1)+(i-1)f(N)+1}^{2}}{q_{n}^{2}}\\
&\ge\frac{1}{6}\left(\frac{1}{2^{n-jf(n_1)-(i-1)f(N)-1}a_{jf(n_1)+(i-1)f(N)+2}\cdots a_{n}}\right)^{2}\\ &\ge\frac{1}{6}\left(\frac{1}{4M^{2}}\right)^{f(N)}.
\end{align*}
Thus, for large enough $n$ such that $n-1\ge f(N)^2$, we have
\begin{equation}\label{E9}
\begin{split}
\mu(J_{n})&\le\mu(J_{jf(n_{1})+(i-1)f(N)})\\ &\le\left|J_{jf(n_{1})+(i-1)f(N)}\right|^{s-\frac{15}{f(N)}-\frac{\log B}{\log 2}\cdot\frac{6(t+1)}{f(N)}}\\
&\le 6\cdot\left(4M^{2}\right)^{f(N)}\left|J_{n}\right|^{s-\frac{15}{f(N)}-\frac{\log B}{\log 2}\cdot\frac{6(t+1)}{f(N)}}\\&\ll6\left|J_{n}\right|^{s-\frac{15}{f(N)}-\frac{\log B}{\log 2}\cdot\frac{6(t+1)}{f(N)}-\frac{\log 4M^2}{\log 2}\cdot \frac{2}{f(N)}}.
\end{split}
\end{equation}
For the other case, there exists $1\le i\le v_1$ such that $$n_1f(n_1)+(i-1)f(N)<n<n_1f(n_1)+i f(N),$$ the process is similar as above, therefore we omit the details.

\item When $n=f(n_{k})-1$ for some $k\ge 2$, recall inequalities (\ref{c2}), one has for each $1\le\ell\le k-1$,
\begin{align*}
q_{f(n_{\ell+1})-f(n_{\ell})}&(w_{1}^{(\ell,1)},\ldots,w_{v_{\ell}}^{(\ell,n_{\ell})},2,\ldots,2)\le2^{2(f(n_{\ell+1})-m_{\ell})+3n_{\ell}-1}\cdot\\
&B^{dn_{\ell}^2}q_{v_{\ell}f(N)}(w_{1}^{(\ell,n_{\ell})},\ldots,w_{v_{\ell}}^{(\ell,n_{\ell})})\cdot\prod_{i=1}^{n_{\ell}-1}q_{f(n_{\ell})-1}(w_{1}^{(\ell,i)},\ldots,w_{r_{\ell}}^{(\ell,i)}).
\end{align*}
Our main concerns are to find  the coefficients of the product of convergents. For ease of notations, for any $2\le \ell\le k-1$,  we write
$$Q_{\ell}:=q_{v_{\ell-1}f(N)}(w_{1}^{(\ell-1,n_{\ell-1})},\ldots,w_{v_{\ell-1}}^{(\ell-1,n_{\ell-1})})\prod_{i=1}^{n_{\ell}-1}q_{f(n_{\ell})-1}(w_{1}^{(\ell,i)},\ldots,w_{r_{\ell}}^{(\ell,i)}).$$
 Thus, by the choice of largest $v_{\ell}$, one has $f(n_{\ell+1})-m_{\ell}\le f(N)$, then the length of $J_{f(n_k)-1}$ is
\begin{align*}
|J_{f(n_k)-1}|&\ge\frac{1}{2^5A_1(n_k)^{n_k}\prod_{\ell=1}^{k-1}2^{2(2f(N)+3n_{\ell})}B^{2dn_{\ell}^2}}\cdot\frac{1}{q_{v_{k-1}f(N)}^2(w_{1}^{(k-1,n_{k-1})},\ldots,w_{v_{k-1}}^{(k-1,n_{k-1})})}\cdot\\
&\quad\quad\quad\quad\quad\quad\quad\quad\quad\quad\quad\quad\prod_{i=0}^{n_{1}-1}\frac{1}{q_{f(n_1)-1}^{2}(w_{1}^{(1,i)},\ldots,w_{r_{1}}^{(1,i)})}\cdot\prod_{\ell=2}^{k-1}\frac{1}{Q_{\ell}^2}.
\end{align*}
Therefore, we have
\begin{equation*}\label{E10}
\begin{split}
&\quad\mu(J_{f(n_k)-1})=\mu(J_{m_{k-1}})\\
&\le \frac{1}{B^{dn_{k-1}^2}}\prod_{t=1}^{v_{k-1}}\frac{1}{q_{f(N)}^{2s}(w_t^{(k-1,n_{k-1})})B^{(2s-1)dN}}\cdot\prod_{i=1}^{n_{k-1}-1}\prod_{t=1}^{r_{k-1}}\frac{1}{q_{f(N)}^{2s}(w_t^{(k-1,i)})B^{(2s-1)dN}}\cdot\mu(J_{f(n_{k-1})-1})\\
&\le \prod_{\ell=1}^{k-1}\frac{1}{B^{dn_{\ell}^2}}\prod_{t=1}^{v_{\ell}}\frac{1}{q_{f(N)}^{2s}(w_t^{(\ell,n_{\ell})})B^{(2s-1)dN}}\cdot\prod_{i=1}^{n_{\ell}-1}\prod_{t=1}^{r_{\ell}}\frac{1}{q_{f(N)}^{2s}(w_t^{(\ell,i)})B^{(2s-1)dN}}\cdot\mu(J_{f(n_{1})-1}).
\end{split}
\end{equation*}
We split the calculations into several parts.
\begin{itemize}
\item Let
\begin{align*}
P_{1}:&=\frac{1}{B^{dn_{1}^2}}\prod_{i=0}^{n_{1}-1}\prod_{t=1}^{r_{1}}\frac{1}{q_{f(N)}^{2s}(w_t^{(1,i)})B^{(2s-1)dN}}\\
&=\frac{1}{B^{dn_{1}^2+(2s-1)dNr_1n_1}}\prod_{i=0}^{n_{1}-1}\left(\frac{1}{q_{f(n_1)-1}^{2}(w_1^{(1,i)},\ldots,w_{r_1}^{(1,i)})}\right)^{s-\frac{2}{f(N)}},
\end{align*}
then, we have
\begin{equation}\label{E101}
\begin{split}
P_1B^{2sdn_1^2}&=B^{(2s-1)n_1(r_1t-t+1)}\prod_{i=0}^{n_{1}-1}\left(\frac{1}{q_{f(n_1)-1}^{2}(w_1^{(1,i)},\ldots,w_{r_1}^{(1,i)})}\right)^{s-\frac{2}{f(N)}}\\
&\le B^{\frac{2(t+1)dn_1^2}{f(N)}}\prod_{i=0}^{n_{1}-1}\left(\frac{1}{q_{f(n_1)-1}^{2}(w_1^{(1,i)},\ldots,w_{r_1}^{(1,i)})}\right)^{s-\frac{2}{f(N)}}.
\end{split}
\end{equation}

\item For each $2\le \ell\le k-1$,
\begin{align*}
P_{\ell}:&=\frac{1}{B^{dn_{\ell}^2}}\prod_{t=1}^{v_{\ell-1}}\frac{1}{q_{f(N)}^{2s}(w_t^{(\ell-1,i)})B^{(2s-1)dN}}\cdot\prod_{i=1}^{n_{\ell}-1}\prod_{t=1}^{r_{\ell}}\frac{1}{q_{f(N)}^{2s}(w_t^{(\ell,i)})B^{(2s-1)dN}}\\
&=\frac{1}{B^{dn_{\ell}^2+(2s-1)dN(v_{\ell-1}+r_{\ell}(n_{\ell}-1))}}\cdot\left(\frac{1}{Q_{\ell}}\right)^{s-\frac{2}{f(N)}}.
\end{align*}
Recall that $n_{\ell}f(n_{\ell})+v_{\ell}f(N)< f(n_{\ell+1})\le n_{\ell}f(n_{\ell})+(v_{\ell}+1)f(N)$ and $n_{\ell}f(n_{\ell})\ll\varepsilon f(n_{\ell+1})$ by the choice of $r_{\ell}$ and $v_{\ell}$, these induce following inequalities that we will use later,
\begin{equation}\label{E10all}
\begin{split}
r_{\ell}dN&=f(n_{\ell})-1-r_{\ell}t,\\
v_{\ell-1}dN&\ge(1-\varepsilon)f(n_{\ell})-f(N)-v_{\ell-1}t,\\
n_{\ell}r_{\ell}t&=\frac{(dn_{\ell}+t-1)n_{\ell}t}{f(N)}\le\frac{2(t+1)dn_{\ell}^2}{f(N)},\\
\varepsilon f(n_{\ell})+v_{\ell-1}t&\le\frac{(1+\varepsilon)f(n_{\ell})}{f(N)}+\frac{f(n_{\ell})}{f(N)}\le\frac{5d(t+1)n_{\ell}}{f(N)}.
\end{split}
\end{equation}
Therefore,
\begin{equation}\label{E102}
\begin{split}
P_{\ell}B^{2sdn_{\ell}^2}&\le B^{\varepsilon f(n_{\ell})+f(N)+v_{\ell-1}t+n_{\ell}+n_{\ell}r_{\ell}t}\cdot\left(\frac{1}{Q_{\ell}}\right)^{s-\frac{2}{f(N)}}\\
&\le B^{\frac{2(t+1)dn_{\ell}^2}{f(N)}+\frac{5d(t+1)n_{\ell}}{f(N)}+f(N)+n_{\ell}} \cdot\left(\frac{1}{Q_{\ell}}\right)^{s-\frac{2}{f(N)}}.
\end{split}
\end{equation}
\item For the last part, let
\begin{align*}
P_{k}:&=\prod_{t=1}^{v_{k-1}}\frac{1}{q_{f(N)}^{2s}(w_{t}^{(k-1,n_{k-1})})B^{(2s-1)dN}}\\
&=\frac{1}{B^{(2s-1)dNv_{k-1}}}\left(\frac{1}{q_{v_{k-1}f(N)}^2(w_{1}^{(k-1,n_{k-1})},\ldots,w_{v_{k-1}}^{(k-1,n_{k-1})})}\right)^{s-\frac{2}{f(N)}}.
\end{align*}
Thus, we have
\begin{equation}\label{E103}
\begin{split}
A_1(n_k)^{sn_k}P_k&\le B^{\varepsilon f(n_k)+\varepsilon dn_k+f(N)+v_{k-1}t}\left(\frac{1}{q_{v_{k-1}f(N)}^2(w_{1}^{(k-1,n_{k-1})},\ldots,w_{v_{k-1}}^{(k-1,n_{k-1})})}\right)^{s-\frac{2}{f(N)}}\\
&\le B^{\frac{6d(t+1)n_k}{f(N)}+f(N)}\left(\frac{1}{q_{v_{k-1}f(N)}^2(w_{1}^{(k-1,n_{k-1})},\ldots,w_{v_{k-1}}^{(k-1,n_{k-1})})}\right)^{s-\frac{2}{f(N)}}.
\end{split}
\end{equation}
\end{itemize}
Take $j=1,n_1,\ldots,n_{k-1}$ respectively in (\ref{E31}) (\ref{E32}) and combined with (\ref{E101}) (\ref{E102}) (\ref{E103}), we have
\begin{align*}
\mu(J_{f(n_k)-1})&\le 2^{5+4(k-1)f(N)+(6+\frac{\log B}{\log 2})(n_1+\cdots+n_{k-1})}|J_{f(n_k)-1}|^{s-\frac{2}{f(N)}-\frac{\log B}{\log 2}\cdot\frac{16(t+1)}{f(N)}-\frac{\log B}{\log 2}\cdot\frac{2}{f(N)}}\\
&\le 2^5|J_{f(n_k)-1}|^{s-\frac{22}{f(N)}-\frac{\log B}{\log 2}\cdot\frac{16(t+1)}{f(N)}-\frac{\log B}{\log 2}\cdot\frac{4}{f(N)}}.
\end{align*}
For the remaining cases, it suffices to show the estimates for different forms of $P_k$. We write these as $P_{k}^{(j)}$ and $P_{k}^{\prime(j)}$.
\item When $n=jf(n_k)$ for some $k\ge 2$ and $1\le j\le n_k$, we first discuss a modified form of $P_k$.
\begin{align*}
P_{k}^{(j)}:&=\frac{1}{A_{j}(n_k)^{n_k}}\prod_{i=1}^{j-1}\frac{1}{A_{i}(n_k)^{n_{k}}}\prod_{t=1}^{r_{k}}\frac{1}{q_{f(N)}^{2s}(w_t^{(k,i)})B^{(2s-1)dN}}\cdot\prod_{t=1}^{v_{k-1}}\frac{1}{q_{f(N)}^{2s}(w_t^{(k-1,n_{k-1})})B^{(2s-1)dN}}\\
&=\frac{1}{(A_1(n_k)\cdots A_j(n_k))^{n_k}}\frac{1}{B^{(2s-1)dN(r_k(j-1)+v_{k-1})}}\cdot\\
&\quad\prod_{i=1}^{j-1}\left(\frac{1}{q_{f(n_k)-1}(w_{1}^{(k,i)},\ldots,w_{r_k}^{(k,i)})}\right)^{s-\frac{2}{f(N)}}\left(\frac{1}{q_{v_{k-1}f(N)}^2(w_{1}^{(k-1,n_{k-1})},\ldots,w_{r_k}^{(k-1,n_{k-1})})}\right)^{s-\frac{2}{f(N)}}\\
&=\frac{A_1(n_k)^{sjn_k}}{A_j(n_k)^{(1-s)n_k}(A_1(n_k)\cdots A_j(n_k))^{2sn_k}}\frac{1}{B^{(2s-1)dN(r_k(j-1)+v_{k-1})}}\cdot\\
&\quad\prod_{i=1}^{j-1}\left(\frac{1}{q_{f(n_k)-1}(w_{1}^{(k,i)},\ldots,w_{r_k}^{(k,i)})}\right)^{s-\frac{2}{f(N)}}\left(\frac{1}{q_{v_{k-1}f(N)}^2(w_{1}^{(k-1,n_{k-1})},\ldots,w_{r_k}^{(k-1,n_{k-1})})}\right)^{s-\frac{2}{f(N)}}.\\
\end{align*}
Thus, using inequalities in (\ref{E10all}), one concludes that
\begin{align*}
\quad(&A_1(n_k)\cdots  A_j(n_k))^{2sn_k} P_k^{(j)}\le B^{(2s-1+s\varepsilon)jdn_k-(2s-1)dN(r_k(j-1)+v_{k-1})}\cdot\\
&\quad\quad\prod_{i=1}^{j-1}\left(\frac{1}{q_{f(n_k)-1}(w_{1}^{(k,i)},\ldots,w_{r_k}^{(k,i)})}\right)^{s-\frac{2}{f(N)}}\left(\frac{1}{q_{v_{k-1}f(N)}^2(w_{1}^{(k-1,n_{k-1})},\ldots,w_{r_k}^{(k-1,n_{k-1})})}\right)^{s-\frac{2}{f(N)}}\\
&\le B^{3dn_k+\frac{2(t+1)dn_{k}^2}{f(N)}}\cdot\prod_{i=1}^{j-1}\left(\frac{1}{q_{f(n_k)-1}(w_{1}^{(k,i)},\ldots,w_{r_k}^{(k,i)})}\right)^{s-\frac{2}{f(N)}}\left(\frac{1}{q_{v_{k-1}f(N)}^2(w_{1}^{(k-1,n_{k-1})},\ldots,w_{r_k}^{(k-1,n_{k-1})})}\right)^{s-\frac{2}{f(N)}},
\end{align*}
where item $A_j(n_k)^{(1-s)n_k}$ is discarded in the first inequality. Recall the estimation of $q_{f(n_k)-1}$ in the previous case, the length of $J_{jf(n_k)}$ is
\begin{align*}
|J_{jf(n_k)}|&\ge \frac{1}{6q_{jf(n_k)}^{2}}\\
&\ge\frac{1}{6\cdot 2^{6j-2}(A_1(n_k)\cdots A_j(n_k))^{2n_k}\prod_{\ell=1}^{k-1}2^{2(2f(N)+3n_{\ell})}B^{2dn_{\ell}^2}}\cdot\prod_{i=1}^{j-1}\left(\frac{1}{q_{f(n_k)-1}^2(w_{1}^{(k,i)},\ldots,w_{r_k}^{(k,i)})}\right)\cdot\\
&\quad\frac{1}{q_{v_{k-1}f(N)}^2(w_{1}^{(k-1,n_{k-1})},\ldots,w_{v_{k-1}}^{(k-1,n_{k-1})})}\prod_{i=0}^{n_{1}-1}\frac{1}{q_{f(n_1)-1}^{2}(w_{1}^{(1,i)},\ldots,w_{r_1}^{(1,i)})}\cdot\prod_{\ell=2}^{k-1}\frac{1}{Q_{\ell}^2}.
\end{align*}
Therefore, take $j=1,n_1,\ldots,n_{k-1}$ respectively (\ref{E31}) (\ref{E32}) and combined with (\ref{E101}) (\ref{E102}), we have
\begin{equation}\label{E11}
\begin{split}
\mu(J_{jf(n_k)})&\le 6\cdot 2^{6j-2+4(k-1)f(N)+6(n_1+\cdots+n_{k-1})}|J_{jf(n_k)}|^{s-\frac{2}{f(N)}-\frac{\log B}{\log 2}\cdot\frac{16(t+1)}{f(N)}-\frac{\log B}{\log 2}\cdot\frac{8}{f(N)}}\\
&\le 6\cdot|J_{jf(n_k)}|^{s-\frac{34}{f(N)}-\frac{\log B}{\log 2}\cdot\frac{16(t+1)}{f(N)}-\frac{\log B}{\log 2}\cdot\frac{8}{f(N)}}.
\end{split}
\end{equation}

\item When $n=jf(n_k)-1$ for some $k\ge 2$ and $2\le j\le n_k$, we have
\begin{align*}
P_{k}^{\prime(j)}:&=\prod_{i=1}^{j-1}\frac{1}{A_{i}(n_k)^{n_{k}}}\prod_{t=1}^{r_{k}}\frac{1}{q_{f(N)}^{2s}(w_t^{(k,i)})B^{(2s-1)dN}}\cdot\prod_{t=1}^{v_{k-1}}\frac{1}{q_{f(N)}^{2s}(w_t^{(k-1,n_{k-1})})B^{(2s-1)dN}}\\
&=\frac{1}{(A_1(n_k)\cdots A_{j-1}(n_k))^{n_k}}\frac{1}{B^{(2s-1)dN(r_k(j-1)+v_{k-1})}}\cdot\\
&\quad\prod_{i=1}^{j-1}\left(\frac{1}{q_{f(n_k)-1}(w_{1}^{(k,i)},\ldots,w_{r_k}^{(k,i)})}\right)^{s-\frac{2}{f(N)}}\left(\frac{1}{q_{v_{k-1}f(N)}^2(w_{1}^{(k-1,n_{k-1})},\ldots,w_{r_k}^{(k-1,n_{k-1})})}\right)^{s-\frac{2}{f(N)}}\\
&=\frac{A_1(n_k)^{s(j-1)n_k}}{A_{j-1}(n_k)^{(1-s)n_k}(A_1(n_k)\cdots A_{j-1}(n_k))^{2sn_k}}\frac{1}{B^{(2s-1)dN(r_k(j-1)+v_{k-1})}}\cdot\\
&\quad\prod_{i=1}^{j-1}\left(\frac{1}{q_{f(n_k)-1}(w_{1}^{(k,i)},\ldots,w_{r_k}^{(k,i)})}\right)^{s-\frac{2}{f(N)}}\left(\frac{1}{q_{v_{k-1}f(N)}^2(w_{1}^{(k-1,n_{k-1})},\ldots,w_{r_k}^{(k-1,n_{k-1})})}\right)^{s-\frac{2}{f(N)}}.\\
\end{align*}
The coefficient in front of the product of convergents can be alternated by (\ref{cor2}), that is
$$A_{j}(n_k)^s=A_{1}(n_k)^sA_{j-1}(n_k)^{1-s}.$$
This implies that
\begin{align*}
&\quad A_j(n_k)^{sn_k}(A_1(n_k)\cdots  A_{j-1}(n_k))^{2sn_k} P_k^{\prime(j)}\le A_1(n_k)^{sjn_k}B^{-(2s-1)dN(r_k(j-1)+v_{k-1})}\cdot\\
&\quad\quad\quad\quad\quad\quad\quad\quad\prod_{i=1}^{j-1}\left(\frac{1}{q_{f(n_k)-1}(w_{1}^{(k,i)},\ldots,w_{r_k}^{(k,i)})}\right)^{s-\frac{2}{f(N)}}\left(\frac{1}{q_{v_{k-1}f(N)}^2(w_{1}^{(k-1,n_{k-1})},\ldots,w_{r_k}^{(k-1,n_{k-1})})}\right)^{s-\frac{2}{f(N)}}\\
&\le B^{3dn_k+\frac{2(t+1)dn_{k}^2}{f(N)}}\cdot\quad\prod_{i=1}^{j-1}\left(\frac{1}{q_{f(n_k)-1}(w_{1}^{(k,i)},\ldots,w_{r_k}^{(k,i)})}\right)^{s-\frac{2}{f(N)}}\left(\frac{1}{q_{v_{k-1}f(N)}^2(w_{1}^{(k-1,n_{k-1})},\ldots,w_{r_k}^{(k-1,n_{k-1})})}\right)^{s-\frac{2}{f(N)}},
\end{align*}
which is the same as in the previous case. The length of $J_{jf(n_k)-1}$ is given by
\begin{align*}
|J_{jf(n_k)-1}|&\ge \frac{1}{2^5A_j(n_k)^{n_k}q_{jf(n_k)-1}^{2}}\\
&\ge \frac{1}{2^{6j-1}A_j(n_k)^{n_k}(A_1(n_k)\cdots A_{j-1}(n_k))^{2n_k}}\cdot\prod_{i=1}^{j-1}\left(\frac{1}{q_{f(n_k)-1}^2(w_1^{(k,i)},\ldots,w_{r_k}^{(k,i)})}\right)\frac{1}{q_{f(n_k)-1}^2}\\
&\ge\frac{1}{2^{6j-1}A_j(n_k)^{n_k}(A_1(n_k)\cdots A_{j-1}(n_k))^{2n_k}\prod_{\ell=1}^{k-1}2^{2(2f(N)+3n_{\ell})}B^{2dn_{\ell}^2}}\prod_{i=1}^{j-1}\left(\frac{1}{q_{f(n_k)-1}^2(w_1^{(k,i)},\ldots,w_{r_k}^{(k,i)})}\right)\cdot\\
&\quad\frac{1}{q_{v_{k-1}f(N)}^2(w_{1}^{(k-1,n_{k-1})},\ldots,w_{v_{k-1}}^{(k-1,n_{k-1})})}\prod_{i=0}^{n_{1}-1}\frac{1}{q_{f(n_1)-1}^{2}(w_{1}^{(1,i)},\ldots,w_{r_1}^{(1,i)})}\cdot\prod_{\ell=2}^{k-1}\frac{1}{Q_{\ell}^2}.
\end{align*}
Therefore, similar with (\ref{E31}) (\ref{E32}) and combined with (\ref{E101}) (\ref{E102}), we have
\begin{equation}\label{E11}
\begin{split}
\mu(J_{jf(n_k)-1})&\le 6\cdot 2^{6j-1+4(k-1)f(N)+6(n_1+\cdots+n_{k-1})}|J_{jf(n_k)-1}|^{s-\frac{2}{f(N)}-\frac{\log B}{\log 2}\cdot\frac{16(t+1)}{f(N)}-\frac{\log B}{\log 2}\cdot\frac{8}{f(N)}}\\
&\le 6\cdot|J_{jf(n_k)-1}|^{s-\frac{34}{f(N)}-\frac{\log B}{\log 2}\cdot\frac{16(t+1)}{f(N)}-\frac{\log B}{\log 2}\cdot\frac{8}{f(N)}}.
\end{split}
\end{equation}

\item When $n=jf(n_k)+if(N)$ for some $1\le j< n_k$ and $1\le i<r_k$, we have
\begin{equation}\label{E12}
\begin{split}
\mu(J_{n})&\le \prod_{t=1}^{i}\frac{1}{q_{f(N)}^{2s}(w_t^{(k,j)})B^{(2s-1)dN}}\cdot\mu(J_{jf(n_k)})\\
&\le \left(\frac{1}{q_{if(N)}^2(w_1^{(k,j)},\ldots,w_i^{(k,j)})}\right)^{s-\frac{2}{f(N)}}\cdot\left(\frac{1}{q_{jf(n_k)}^2}\right)^{s-\frac{34}{f(N)}-\frac{\log B}{\log 2}\cdot\frac{16(t+1)}{f(N)}-\frac{\log B}{\log 2}\cdot\frac{8}{f(N)}}\\
&\le2\left(\frac{1}{q_{n}^2}\right)^{s-\frac{34}{f(N)}-\frac{\log B}{\log 2}\cdot\frac{16(t+1)}{f(N)}-\frac{\log B}{\log 2}\cdot\frac{8}{f(N)}}\\
&\le 12|J_n|^{s-\frac{34}{f(N)}-\frac{\log B}{\log 2}\cdot\frac{16(t+1)}{f(N)}-\frac{\log B}{\log 2}\cdot\frac{8}{f(N)}}.
\end{split}
\end{equation}
When $n=n_kf(n_k)+if(N)$ with $1\le i\le v_k$, the process is similar and the H\"older exponent is the same. We omit the details.

\item When $m_k+1\le n< f(n_{k+1})-1$, the gap between $n$ and $m_k$ is less than $f(N)$. This case is similar with $k=1$.
\begin{equation}\label{E13}
\begin{split}
\mu(J_{n})&=\mu(J_{m_k})\le \prod_{t=1}^{v_k}\frac{1}{q_{f(N)}^{2s}(w_t^{(k,n_k)})B^{(2s-1)dN}}\cdot\mu(J_{n_kf(n_k)})\\
&\le \left(\frac{1}{q_{v_kf(N)}^2(w_1^{(k,n_k)},\ldots,w_{v_k}^{(k,n_k)})}\right)^{s-\frac{2}{f(N)}}\cdot\left(\frac{1}{q_{n_kf(n_k)}^2}\right)^{s-\frac{34}{f(N)}-\frac{\log B}{\log 2}\cdot\frac{16(t+1)}{f(N)}-\frac{\log B}{\log 2}\cdot\frac{8}{f(N)}}\\
&\le2\left(\frac{1}{q_{m_k}^2}\right)^{s-\frac{34}{f(N)}-\frac{\log B}{\log 2}\cdot\frac{16(t+1)}{f(N)}-\frac{\log B}{\log 2}\cdot\frac{8}{f(N)}}\\
&\le2^{4(n-m_k)+6}\left(\frac{1}{q_{n}^2}\right)^{s-\frac{34}{f(N)}-\frac{\log B}{\log 2}\cdot\frac{16(t+1)}{f(N)}-\frac{\log B}{\log 2}\cdot\frac{8}{f(N)}}\\
&\le 6\cdot2^6|J_n|^{s-\frac{34}{f(N)}-\frac{\log B}{\log 2}\cdot\frac{16(t+1)}{f(N)}-\frac{\log B}{\log 2}\cdot\frac{8}{f(N)}-\frac{16d(t+1)}{f(N)}},
\end{split}
\end{equation}
where the last inequality holds by the same process as in the case $k=1$, that is
$$q_n^{2\cdot\frac{16d(t+1)}{f(N)}}\ge 2^{4(n-m_k)}.$$

\item For the remaining cases, when $n<f(n_{k+1})-1$, there are a total of  two cases that should be discussed. One case is that $jf(n_k)+(i-1)f(N)<n<jf(n_k)+i f(N)$ where $1\le i\le r_k$ and $0\le j< n_k$. The second case is that  there exists $1\le i\le v_1$ such that $n_kf(n_k)+(i-1)f(N)<n<n_kf(n_k)+i f(N)$. The estimates for both of them are similar to the case $k=1$, therefore, we omit the details.
\end{itemize}

Finally, in a nutshell,  for any $n\ge 1$, the mass on fundamental intervals is about
\begin{equation}\label{ET}
\mu(J_{n})\le c_1|J_{n}|^{s-\frac{c_2}{f(N)}},
\end{equation}
where $c_1,c_2$ are absolute constant independent with $N$.

\subsection{H\"older exponent for a general ball $B(x, r)$} Fix $x\in F$ and the radius of the ball $B(x, r)$ is small enough. There exists a unique sequence $a_1,a_2,\ldots$ such that $x\in J_m(a_1,\ldots,a_m)$ for any $m\ge 1$. Let $n$ be the integer such that
\begin{align*}
g_{n+1}\left(a_{1},\ldots,a_{n+1}\right)\le r<g_{n}\left(a_{1},\ldots,a_{n}\right).
\end{align*}
Then the ball $B(x,r)$ can only intersect one fundamental interval of order $n$, that is $J_{n}\left(a_{1},\ldots,a_{n}\right)$. Thus, all fundamental intervals of order $n+1$ which has non-empty intersection with $B(x,r)$ are contained in $J_{n}\left(a_{1},\ldots,a_{n}\right)$.
Therefore,
\begin{itemize}
\item When $n=jf(n_{k})-1$ for any $k\ge 1$ and $1\le j\le n_{k}$, the length of $I_{jf(n_k)}$ is
\begin{align*}
|I_{jf(n_{k})}|=\frac{1}{q_{jf(n_{k})}(q_{jf(n_{k})}+q_{jf(n_{k})-1})}\ge\frac{1}{2q_{jf(n_{k})}^{2}}\ge\frac{1}{2^5A_{j}(n_k)^{n_{k}}q_{jf(n_{k})-1}^{2}}.
\end{align*}
\begin{itemize}
\item When $$r<\frac{1}{2^5A_{j}(n_k)^{n_{k}}q_{jf(n_{k})-1}^{2}},$$ the ball $B\left(x,r\right)$ can intersect at most three basic intervals of order $jf(n_{k})$, all of these has same mass. Combined with $(\ref{gap})$ and $(\ref{ET})$ , we have
\begin{equation}
\begin{split}
\mu\left(B\left(x,r\right)\right)&\le3\mu(J_{jf(n_{k})})\le 3c_1|J_{jf(n_{k})}|^{s-\frac{c_2}{f(N)}}\le 3c_1M|g_{n+1}|^{s-\frac{c_2}{f(N)}}\le 3c_1Mr^{s-\frac{c_2}{f(N)}}.
\end{split}
\end{equation}
Here, $g_{n+1}$ represents the gap $g_{n+1}(a_1,\ldots,a_{n+1})$ for simplicity.
\item  When $$r>\frac{1}{2^5A_{j}(n_k)^{n_{k}}q_{jf(n_{k})-1}^{2}},$$ the number of basic intervals of order $jf(n_k)$ that intersect $B(x,r)$ is at most
\begin{align*}
2r\cdot 2^5A_{j}(n_k)^{n_{k}}q_{jf(n_{k})-1}^{2}+2\le 2^7rA_{j}(n_k)^{n_{k}}q_{jf(n_{k})-1}^{2}.
\end{align*}
Thus, by $\left(\ref{ET}\right)$, we have
\begin{equation}
\begin{split}
\mu\left(B\left(x,r\right)\right)&\le\min\Big\{\mu(J_{n}),2^7rA_{j}(n_k)^{n_{k}}q_{n}^{2}\mu(J_{n+1})\Big\}\\
&\le\mu(J_{n})\min\Big\{1,2^7rA_{j}(n_k)^{n_{k}}q_{n}^{2}\frac{1}{A_{j}(n_k)^{n_{k}}}\Big\}\\
&\le c_1|J_{n}|^{s-\frac{c_2}{f(N)}}\min\Big\{1,2^7rq_{n}^{2}\Big\}\\
&\le c_1\left(\frac{1}{q_{n}^{2}}\right)^{s-\frac{c_2}{f(N)}}\cdot (2^7rq_{n}^{2})^{s-\frac{c_2}{f(N)}}\\
&\le 2^7c_1r^{s-\frac{c_2}{f(N)}}.
\end{split}
\end{equation}
\end{itemize}


\item When $m_k\le n\le f(n_{k+1})-2$ for any $k\ge 1$, the length of basic interval $I_{n+1}$ is
$$|I_{n+1}|=\frac{1}{q_{n+1}\left(q_{n+1}+q_{n}\right)}\ge\frac{1}{2^5q_{n}^{2}}.$$
When $r$ is smaller than $|I_{n+1}|$, the situation is pretty much the same as the previous case. As for other situation, the number of basic intervals of order $n+1$ that intersect $B(x,r)$ is at most
\begin{align*}
2r\cdot 2^5q_{n}^{2}+2\le 2^7rq_{n}^{2}.
\end{align*}
Thus, by $(\ref{ET})$, we have
\begin{equation}
\begin{split}
\mu(B(x,r))&\le\min\Big\{\mu\left(J_{n}\right),2^7rq_{n}^{2}\mu(J_{n+1})\Big\}\\
&\le\mu(J_{n})\min\Big\{1,2^7rq_{n}^{2}\Big\}\\
&\le c_1|J_{n}|^{s-\frac{c_2}{f(N)}}\min\Big\{1,2^7rq_{n}^{2}\Big\}\\
&\le c_1\left(\frac{1}{q_{n}^{2}}\right)^{s-\frac{c_2}{f(N)}}\left(2^7rq_{n}^{2}\right)^{s-\frac{c_2}{f(N)}}\\
&\le 2^7c_1r^{s-\frac{c_2}{f(N)}}.
\end{split}
\end{equation}

\item For the remaining cases, as the construction in $F$, we know that $1\le a_{n}\le M$ for any $n\ge 1$. Then by $(\ref{ET})$ and $(\ref{gap})$, we have
\begin{align*}
\mu(B(x,r))&\le\mu(J_{n})\le c_1|J_{n}|^{s-\frac{c_2}{f(N)}}\le c_1\left(\frac{1}{q_{n}^{2}}\right)^{s-\frac{c_2}{f(N)}}\le 4c_1M^{2}\left(\frac{1}{q_{n+1}^{2}}\right)^{s-\frac{c_2}{f(N)}}\\
&\le 24c_1M^{2}\left|J_{n+1}\right|^{s-\frac{c_2}{f(N)}}\le 24c_1M^{3}\left|g_{n+1}\right|^{s-\frac{c_2}{f(N)}}\le 24c_1M^{3}r^{s-\frac{c_2}{f(N)}}.
\end{align*}
\end{itemize}

We conclude by the mass distribution principle (Proposition $\ref{MD}$)
\begin{align*}
\dim_\HH F\ge s-\frac{c_2}{f(N)}.
\end{align*}
Letting $N\rightarrow\infty$ and then $M\rightarrow\infty$ implies that
\begin{align*}
\dim_\HH E_B\ge \Big\{s:\textrm{P}(-s\log |T'(x)|-(2s-1)\log B)\le 0\Big\}.
\end{align*}

\section{Upper bound of $\dim_\HH E_B$}
For the upper bound of $\dim_\HH E_B$, we use the natural cover for the limsup set. Firstly, we quote a result from \cite[Lemma 4.8]{HuWuXu}, that we will use later.
\begin{lemma}[{\cite[Lemma 4.8]{HuWuXu}}]
Let $\phi$ be an arbitrary positive function defined on natural numbers $\mathbb{N}$, we have
\begin{equation}\label{lem31}
\sum_{1\le a_{f(n)}a_{2f(n)}\cdots a_{kf(n)}<\phi(n)}\left(\frac{1}{a_{f(n)}a_{2f(n)}\cdots a_{kf(n)}}\right)^s\ll \phi(n)^{1-s}(\log\phi(n))^{k-1}.
\end{equation}
\end{lemma}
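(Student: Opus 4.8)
The plan is to prove the estimate by induction on $k$, relabelling the independent positive integers $a_{f(n)},a_{2f(n)},\ldots,a_{kf(n)}$ as $b_1,\ldots,b_k$ and writing $\phi=\phi(n)$, so that the claim becomes
\[
\sum_{1\le b_1\cdots b_k<\phi}\bigl(b_1\cdots b_k\bigr)^{-s}\ll \phi^{1-s}(\log\phi)^{k-1},
\]
with the implied constant allowed to depend on $k$ and on $s\in[0,1)$ (in the application $1/2\le s<1$). For the base case $k=1$ one simply compares the sum with an integral: $\sum_{1\le b<\phi}b^{-s}\le 1+\int_1^{\phi}x^{-s}\,dx\ll\phi^{1-s}$, which is the asserted bound when $k=1$, the power of $\log\phi$ then being $0$.

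For the inductive step, assuming the estimate for $k-1$, I would peel off the last variable:
\[
\sum_{1\le b_1\cdots b_k<\phi}\bigl(b_1\cdots b_k\bigr)^{-s}=\sum_{1\le b_k<\phi}b_k^{-s}\sum_{1\le b_1\cdots b_{k-1}<\phi/b_k}\bigl(b_1\cdots b_{k-1}\bigr)^{-s}.
\]
Applying the inductive hypothesis to the inner sum with $\phi$ replaced by $\phi/b_k$, and using $\log(\phi/b_k)\le\log\phi$, bounds the inner sum by a constant times $(\phi/b_k)^{1-s}(\log\phi)^{k-2}$. The crucial cancellation $b_k^{-s}\cdot b_k^{-(1-s)}=b_k^{-1}$ then turns the outer sum into a harmonic sum:
\[
\sum_{1\le b_1\cdots b_k<\phi}\bigl(b_1\cdots b_k\bigr)^{-s}\ll \phi^{1-s}(\log\phi)^{k-2}\sum_{1\le b_k<\phi}\frac{1}{b_k}\ll \phi^{1-s}(\log\phi)^{k-2}\cdot\log\phi=\phi^{1-s}(\log\phi)^{k-1},
\]
which closes the induction; each of the $k-1$ inductive steps contributes exactly one factor $\log\phi$, which explains the shape of the bound. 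Equivalently, grouping the $k$-tuples by the dyadic size of the product $b_1\cdots b_k$, the count of tuples with $b_1\cdots b_k\le x$ equals $\sum_{m\le x}d_k(m)\asymp x(\log x)^{k-1}$ (the $k$-fold divisor sum), and a dyadic summation $\sum_j 2^{j(1-s)}j^{k-1}\asymp\phi^{1-s}(\log\phi)^{k-1}$ gives the same answer; I would mention this as the conceptual picture but carry out the induction as the actual argument.

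The only genuinely delicate point is the bookkeeping near the boundary: when $b_k$ is close to $\phi$ the quantity $\phi/b_k$ is close to $1$ and $\log(\phi/b_k)$ degenerates (and for $b_k>\phi$ the inner sum is empty), so one should state the inductive hypothesis with a harmless $1+\log(\cdot)$ in place of $\log(\cdot)$, or else first reduce to $\phi\ge e^{k}$ — for bounded $\phi$ the whole sum is $O_k(1)$ — and absorb the small-$b_k$ contributions into the constant. With that convention the displayed chain of inequalities holds verbatim. I expect no serious obstacle beyond this boundary bookkeeping: the mechanism, peeling off one variable and producing one harmonic sum per step, is entirely robust, and letting the implied constants depend on $k$ and on an upper bound for $s$ strictly below $1$ completes the proof.
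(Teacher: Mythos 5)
Your proof is correct. Note that the paper itself offers no proof of this lemma --- it is quoted verbatim from \cite[Lemma 4.8]{HuWuXu} --- and your induction (peeling off one variable, using the cancellation $b_k^{-s}\cdot b_k^{-(1-s)}=b_k^{-1}$ to produce one harmonic sum, hence one factor of $\log\phi$, per step) is exactly the standard argument one finds there; your remark about replacing $\log(\cdot)$ by $1+\log(\cdot)$ to handle the boundary cases is the right way to make the induction airtight.
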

For the coverings of the set $E_B$, we note that $E_B:=\limsup _{n\to\infty}E_n$, where 
\begin{align*}
E_{n}:&=\Big\{x\in\left[0,1\right):\prod_{i=1}^{n}a_{if(n)}\left(x\right)\ge B^{dn^{2}}\Big\}\\
&=\bigcup_{k=1}^{n}\Big\{x\in\left[0,1\right):\prod_{i=1}^{k}a_{if(n)}(x)\ge B^{dn^{2}}, \ \prod_{i=1}^{k}a_{if(n)}(x)<B^{dn^{2}}\Big\}:=\bigcup_{k=1}^{n}F_{n,k}.
\end{align*}
Then, fix an integer $1\le k\le n$, we consider the cover of $F_{n,k}$ as
\begin{align*}
F_{n,k}&=\bigcup_{\substack{a_{1},\ldots,a_{kf(n)}:\prod_{i=1}^{k}a_{if(n)}\ge B^{dn^{2}},\\ \prod_{i=1}^{k-1}a_{if(n)}<B^{dn^{2}}}}I_{kf(n)}\left(a_{1},\ldots, a_{kf(n)}\right)\\
&=\bigcup_{\substack{a_{1},\ldots,a_{kf(n)-1}:\\ \prod_{i=1}^{k-1}a_{if(n)}<B^{dn^2}}}\bigcup_{\substack{a_{kf(n)}: \\ \prod_{i=1}^{k}a_{if(n)}\ge B^{dn^{2}}}}I_{kf(n)}\left(a_{1},\ldots,a_{kf(n)}\right)\\
&=\bigcup_{\substack{a_1,\ldots,a_{kf(n)-1}:\\ \prod_{i=1}^{k-1}a_{if(n)}<B^{dn^2}}}J_{kf(n)-1}\left(a_{1},\ldots,a_{kf(n)-1}\right).
\end{align*}
Then the length of $J_{kf(n)-1}$ is given by
$$|J_{kf(n)-1}\left(a_1,\ldots,a_{kf(n)-1}\right)|\asymp \left(q_{kf(n)-1}^2\frac{B^{dn^2}}{\prod_{i=1}^{k-1}a_{if(n)}}\right)^{-1}.
$$
Thus the $s$-volume of this cover of $F_{n,k}$ is
\begin{equation*}
\begin{split}
\sum_{\substack{a_{1},\ldots,a_{kf(n)-1}: \\ \prod_{i=1}^{k-1}a_{if(n)}<B^{dn^{2}}}}\left|J_{kf(n)-1}\right|^{s}&\asymp\sum_{\substack{a_{1},\ldots,a_{kf(n)-1}:\\ \prod_{i=1}^{k-1}a_{if(n)}<B^{dn^{2}}}}\left(q_{kf(n)-1}^2\frac{B^{dn^{2}}}{\prod_{i=1}^{k-1}a_{if(n)}}\right)^{-s}\\
&\le \left(\sum_{a_{1},\ldots,a_{f(n)-1}}\frac{1}{q_{f(n)-1}^{2s}}\right)^{k}\sum_{\substack{a_{f(n)}, \ldots, a_{(k-1)f(n)}:\\\prod_{i=1}^{k-1}a_{if(n)}<B^{dn^2}}}\left(\frac{1}{a_{f(n)}a_{2f(n)}\cdots a_{(k-1)f(n)}}\right)^s\frac{1}{B^{dn^{2}s}}\\
&\ll n^{2(k-1)}(d\log B)^{k-1} \left(\sum_{a_{1},\ldots,a_{f(n)-1}}\frac{1}{q_{f(n)-1}^{2s}}\right)^{k}B^{dn^2\left(1-s\right)}\frac{1}{B^{dn^2s}},
\end{split}
\end{equation*}
where the last inequality holds by taking $\phi(n)=B^{dn^{2}}$ in (\ref{lem31}). Therefore, the $s$-volume of a cover of $E_n$ is bounded by
\begin{align*}
&\sum_{k=1}^nn^{2(k-1)}(d\log B)^{k-1} \left(\sum_{a_{1},\ldots,a_{f(n)-1}}\frac{1}{q_{f(n)-1}^{2s}}\right)^{k-1}B^{dn^2\left(1-s\right)}\frac{1}{B^{dn^2s}}\\
&\quad\quad\quad\quad\quad\quad\quad\quad\quad\ll n^{2n-1}(d\log B)^{n-1}\left(\sum_{a_{1},\ldots,a_{f(n)-1}}\frac{1}{q_{f(n)-1}^{2s}}\right)^{n}\frac{1}{B^{dn^{2}(2s-1)}}.
\end{align*}
This implies that
\begin{equation}
\begin{split}
\mathcal{H}^{s}\left(E_B\right)&\le\liminf_{N\rightarrow\infty}\sum_{n\ge N}\sum_{1\le k\le n}\sum_{\substack{a_{1},\ldots,a_{kf(n)-1}: \\ \prod_{i=1}^{k-1}a_{if(n)}<B^{dn^2}}}\left|J_{kf(n)-1}\left(a_{1},\ldots,a_{kf(n)-1}\right)\right|^{s}\\
&\le\liminf_{N\rightarrow\infty}\sum_{n\ge N}n^{2n-1}(d\log B)^{n-1}\left(\sum_{a_{1},\ldots,a_{f(n)-1}}\frac{1}{q_{f(n)-1}^{2s}}\right)^{n}\frac{1}{B^{dn^{2}(2s-1)}}.
\end{split}
\end{equation}
\medskip

Finally, to complete the proof, we claim that the dimensional numbers
$$s_0^\prime:=\inf\left\{s: \sum_{n\ge N}\left(\sum_{a_{1},\ldots,a_{f(n)-1}}\frac{1}{q_{f(n)-1}^{2s}}\right)^{n}\frac{1}{B^{dn^{2}\left(2s-1\right)}}<\infty\right\}$$
and
$$s_0:=\inf\left\{s: \lim_{n\to\infty}\frac{1}{dn}\log\left(\sum_{a_{1},\ldots,a_{f(n)-1}}\frac{1}{q_{f(n)-1}^{2s}}\right)\frac{1}{B^{dn\left(2s-1\right)}}\leq 0\right\}$$
are the same.
\begin{proposition}With the above notations $s_0=s_0^\prime$.
\end{proposition}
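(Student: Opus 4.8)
The plan is to show that the two infima coincide by sandwiching each between quantities controlled by the other. Write $S_n(s) := \sum_{a_1,\ldots,a_{f(n)-1}} q_{f(n)-1}^{-2s}$ and $L_n(s) := \tfrac{1}{dn}\log\!\big(S_n(s)\,B^{-dn(2s-1)}\big)$, so that the $n$-th summand in the series defining $s_0'$ is $\exp\!\big(dn\,L_n(s)\big)$. The key structural fact I would establish first is that $L_n(s)$ converges as $n\to\infty$: indeed, since the potential $\phi = -s\log|T'| - (2s-1)\log B$ has $\mathrm{Var}_1(\phi)<\infty$ and $\mathrm{Var}_n(\phi)\to 0$, Proposition~\ref{pro11} applies, and $\tfrac{1}{f(n)-1}\log S_n(s)\to \mathsf{P}(T,-s\log|T'|)$ along with $f(n)=dn+t$ gives $L_n(s)\to \tfrac{1}{d}\big(\mathsf{P}(T,-s\log|T'|)\big) - (2s-1)\log B =: L_\infty(s)$. (This is essentially $s_0 = \inf\{s : L_\infty(s)\le 0\}$, rephrasing the definition.) The map $s\mapsto L_\infty(s)$ is continuous and strictly decreasing on the relevant range — this follows from the analogous property of $\mathsf{P}(T,-s\log|T'|)$ together with the $-(2s-1)\log B$ term being affine decreasing — so $s_0$ is precisely the unique zero of $L_\infty$.

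Next I would prove $s_0' \ge s_0$. Suppose $s < s_0$; then $L_\infty(s) > 0$ by strict monotonicity, hence $L_n(s) \ge \delta > 0$ for all large $n$, so the $n$-th term $\exp(dn\,L_n(s)) \ge \exp(dn\delta) \to \infty$ and the series $\sum_n \exp(dn L_n(s))$ diverges. Therefore no $s < s_0$ lies in the set defining $s_0'$, giving $s_0' \ge s_0$. For the reverse inequality $s_0' \le s_0$, take any $s > s_0$; then $L_\infty(s) < 0$, so $L_n(s) \le -\eta < 0$ for all large $n$, whence $\exp(dn\,L_n(s)) \le \exp(-dn\eta)$, and $\sum_n \exp(-dn\eta) < \infty$ by comparison with a convergent geometric-type series. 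Thus every $s > s_0$ belongs to the set defining $s_0'$, so $s_0' \le s_0$. Combining the two inequalities yields $s_0 = s_0'$.

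The one point requiring care — and the main obstacle — is the passage from the pointwise convergence $L_n(s) \to L_\infty(s)$ to the uniform-in-large-$n$ sign control ($L_n(s)\ge\delta$ resp. $\le-\eta$) that makes the series comparison work: a priori one only knows $L_n(s)\to L_\infty(s)$ for each fixed $s$, which is exactly enough since in both directions we fix $s$ first and then let $n\to\infty$. One should also check that $S_n(s) < \infty$ for $s > 1/2$, which holds because $q_{f(n)-1} \ge 2^{(f(n)-2)/2}$ and $\sum_{a_1,\ldots,a_m} q_m^{-2s}$ converges for $s>1/2$ (the standard convergence exponent for the continued fraction pressure), and indeed Proposition~2.? already guarantees $s_B \ge 1/2$ so the comparison is only needed in the regime where everything is finite. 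Finally, one invokes continuity/monotonicity of $L_\infty$ (equivalently of $s_B$, via the proposition preceding Section~3) to ensure $s_0$ is genuinely characterised as the sign-change point rather than a point where $L_\infty$ might vanish on an interval.
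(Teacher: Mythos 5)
Your argument is correct and is in substance the same term-by-term comparison that the paper uses: for each fixed $s$ one compares the $n$-th term of the series with a (super)geometric quantity whose exponent's sign is governed by the limit appearing in the definition of $s_0$; the divergence direction ($s<s_0$ forces terms $\ge e^{\delta dn^2}$) is identical in both proofs. The one genuine difference is in the direction $s_0'\le s_0$: from $\lim_n L_n(s)\le 0$ alone the series need not converge (e.g.\ $L_n(s)=1/(dn)$ satisfies the limit condition but gives terms $e^{n}$), so some strictness must be extracted. The paper gets it by passing from $s$ to $s+\epsilon$ and playing the explicit gain $B^{-2dn\epsilon}$ against the at-most-$e^{dn\epsilon/\alpha}$ growth permitted at the intermediate exponent, whereas you get it by showing $L_\infty(s)<0$ strictly for every $s>s_0$ via strict monotonicity of $s\mapsto\mathsf{P}(T,-s\log|T'|)$. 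Your route requires that strict monotonicity as an extra (standard but not stated in the paper) input and in exchange works uniformly for $B\ge 1$, while the paper's route needs $\log B>0$; both are legitimate. Two harmless slips you should correct: the $n$-th summand is $\left(S_n(s)B^{-dn(2s-1)}\right)^n=\exp\left(dn^2L_n(s)\right)$, not $\exp\left(dnL_n(s)\right)$ (the extra power of $n$ only strengthens both of your estimates), and since $S_n(s)$ runs over $(f(n)-1)$-tuples with $f(n)-1\sim dn$, the limit is $L_\infty(s)=\mathsf{P}(T,-s\log|T'|)-(2s-1)\log B$ with no factor $1/d$ on the pressure term.
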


\begin{proof} We first show that $s_0\geq s_0^\prime$. To this end, for any $\epsilon>0$, let $s^{\prime}:=s_0+2\epsilon$. For each $s^{\prime}>s_0$, we consider $s=(s_0+s^{\prime})/2$, {then we have}
$$\frac1n\log\left(\sum_{a_{1},\ldots,a_{f(n)-1}}\frac{1}{q_{f(n)-1}^{2\cdot\frac{s^{\prime}+s_0}{2}}}\right)\frac{1}{B^{dn\left(2\cdot\frac{s^{\prime}+s_0}{2}-1\right)}}\leq \frac{d\epsilon}{\alpha},$$
where $\alpha>\frac1{2\log B}$ is a constant. This implies that
\begin{align*}
&\sum_{a_1,\ldots, a_{f(n)-1}}\frac1{q_{f(n)-1}^{2(s^{\prime}-\epsilon)}}\frac1{B^{dn(2(s^{\prime}-\epsilon)-1)}}\leq e^{n\frac{d\epsilon}{\alpha}} \quad \Longrightarrow\quad \sum_{a_1,\ldots, a_{f(n)-1}}\frac1{q_{f(n)-1}^{2s^{\prime}}}\frac1{B^{dn(2s^{\prime}-1)}}\leq \frac{e^{\frac{dn\epsilon}{\alpha}}}{B^{2dn\epsilon}}.
\end{align*}
By the choice of $\alpha$ we can conclude that
$$\sum_{n=N}^\infty\left(\sum_{a_1,\ldots, a_{f(n)-1}}\frac1{q_{f(n)-1}^{2s^{\prime}}}\frac1{B^{dn(2s^{\prime}-1)}}\right)^n<\infty,$$
and $s^{\prime}>s_0^\prime.$ Therefore, $s_0\geq s_0^\prime$.

For the reverse inequality, for all $s<s_0$, there exists $\eta>0$ such that
$$\lim_{n\to\infty}\frac{1}{dn}\log\left(\sum_{a_{1},\ldots,a_{f(n)-1}}\frac{1}{q_{f(n)-1}^{2s}}\right)\frac{1}{B^{dn\left(2s-1\right)}}>\eta.$$
That is to say for large enough $n$, we have
$$\sum_{a_{1},\ldots,a_{f(n)-1}}\frac{1}{q_{f(n)-1}^{2s}}\frac{1}{B^{dn\left(2s-1\right)}}>e^{\eta dn}.$$ Thus,
$$\sum_{n=N}^\infty\left(\sum_{a_{1},\ldots,a_{f(n)-1}}\frac{1}{q_{f(n)-1}^{2s}}\frac{1}{B^{dn\left(2s-1\right)}}\right)^n>\sum_{n=N}^\infty e^{\eta dn^2}=\infty.$$ Hence $s<s_0^\prime$ and as a result $s_0\leq s_0^\prime$.
\end{proof}
Therefore, the upper bound of the Hausdorff dimension of $E_B$ is as follows
$$\dim_\HH E_B\le \inf \Big\{s:\textrm{P}(-s\log |T'(x)|-(2s-1)\log B)\le 0\Big\}.$$

\section{Hausdorff dimension of $E_f(\psi)$}
In this section we focus on the dimension of $E_f\left(\psi\right)$ for a general function $\psi(n)$. First, we quote a well-known result of L\"uczak \cite{Luczak} that will be used later.
\begin{lemma}\label{41}
For any $b,c>1$, $\Big\{x\in[0,1):a_{n}\left(x\right)\ge c^{b^{n}}\textrm{ for infinitely many }n\Big\}$ and $\Big\{x\in[0,1):a_{n}\left(x\right)\ge c^{b^{n}}\textrm{ for sufficiently large }n\Big\}$ have the same Hausdorff dimension $1/\left(b+1\right)$.
\end{lemma}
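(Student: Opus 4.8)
The plan is to pin down the dimension of the ``infinitely often'' set from a theorem already at hand, and then to match it from below by a Cantor construction for the ``eventually'' set. Write
$$L:=\{x\in[0,1):a_{n}(x)\ge c^{b^{n}}\text{ for all large }n\}\subseteq U:=\{x\in[0,1):a_{n}(x)\ge c^{b^{n}}\text{ for infinitely many }n\},$$
so that $\dim_{\HH}L\le\dim_{\HH}U$ by monotonicity. Since $b>1$, the function $\psi(n)=c^{b^{n}}$ satisfies $\liminf_{n}\frac{\log\psi(n)}{n}=\liminf_{n}\frac{b^{n}\log c}{n}=\infty$, so we are in the regime $B=\infty$, and $\liminf_{n}\frac{\log\log\psi(n)}{n}=\liminf_{n}\frac{n\log b+\log\log c}{n}=\log b$; thus $U=\EE_{1}(\psi)$ and the Wang--Wu theorem recalled in the introduction gives $\dim_{\HH}U=\tfrac{1}{1+b}$. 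Consequently $\dim_{\HH}L\le\tfrac{1}{b+1}$, and everything reduces to proving $\dim_{\HH}L\ge\tfrac{1}{b+1}$.

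To that end I would fix $s<\tfrac{1}{b+1}$, set $m_{n}:=\lceil c^{b^{n}}\rceil$, and consider the Cantor set
$$E:=\bigl\{x\in[0,1):m_{n}\le a_{n}(x)\le 2m_{n}\text{ for all }n\ge 1\bigr\}\subseteq L.$$
On $E$ put the measure $\mu$ which distributes the mass of each cylinder $I_{n-1}$ equally among its $\asymp m_{n}$ offspring cylinders $I_{n}$, so that $\mu(I_{n})\asymp\prod_{i\le n}m_{i}^{-1}=e^{-S_{n}}$, where $S_{n}:=\sum_{i\le n}\log m_{i}=\tfrac{b^{n+1}-b}{b-1}\log c+O(1)$. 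Using \eqref{c1}, \eqref{c2} and \eqref{qE} to pin down $q_{n}$ up to a factor subexponential in $n$ gives $\log q_{n}=S_{n}+O(n)$ and hence $|I_{n}|\asymp q_{n}^{-2}=e^{-2S_{n}+O(n)}$. The crucial geometric point, read off from Proposition~\ref{range}, is that inside a cylinder $I_{n-1}$ its $\asymp m_{n}$ children are consecutive intervals occupying only a sub-interval of length $\asymp|I_{n-1}|/m_{n}$ located at one endpoint of $I_{n-1}$; thus there is an empty gap of size $\asymp|I_{n-1}|$ bordering this cluster of children, and the scale that matters for a ball is $|I_{n-1}|/m_{n}$, not the much smaller cylinder length $|I_{n}|$.

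I would then apply the mass distribution principle (Proposition~\ref{MD}) in the manner of Section~4: for $x\in E$ and small $r$ choose the level $n$ for which $r$ is comparable to the gap, so that $B(x,r)$ meets only boundedly many order-$n$ clusters, and estimate $\mu(B(x,r))$ in the three regimes of $r$ (comparable to $|I_{n}|$; between $|I_{n}|$ and the cluster length $|I_{n-1}|/m_{n}$; between the cluster length and $|I_{n-1}|$). In every regime the H\"older bound $\mu(B(x,r))\le Cr^{s}$ reduces to the single inequality $\mu(I_{n-1})\le C(|I_{n-1}|/m_{n})^{s}$, valid for all $n$; on taking logarithms this reads $s\le\dfrac{S_{n-1}+O(\log C)}{2S_{n-1}+\log m_{n}+O(n)}$, and since $S_{n-1}\sim\tfrac{b^{n}}{b-1}\log c$ while $\log m_{n}\sim b^{n}\log c$ the right-hand side tends to $\dfrac{1/(b-1)}{2/(b-1)+1}=\dfrac{1}{b+1}$. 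Hence the inequality holds for all large $n$, the finitely many small $n$ being absorbed into $C$; Proposition~\ref{MD} then yields $\dim_{\HH}E\ge s$, and letting $s\uparrow\tfrac{1}{b+1}$ completes the lower bound, so that $\dim_{\HH}L=\dim_{\HH}U=\tfrac{1}{b+1}$.

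The step I expect to be the main obstacle is precisely this gap analysis: one has to recognise that the binding scale is the inter-cluster gap $|I_{n-1}|/m_{n}$ rather than the cylinder scale $|I_{n}|$ --- at the cylinder scale one merely has $\mu(I_{n})\asymp|I_{n}|^{1/2}$, which would give the non-optimal exponent $\tfrac{1}{2}$ --- and then to verify the H\"older estimate uniformly over the intermediate radii. The attendant bookkeeping for $q_{n}$ (the factors $2^{n}$ in \eqref{c2}, the lower bound \eqref{qE}) is routine, but must be tracked to confirm it stays inside the $O(n)$ error terms and therefore does not perturb the exponent $\tfrac{1}{b+1}$.
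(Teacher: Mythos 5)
The paper does not actually prove this lemma: it is imported wholesale as ``a well-known result of {\L}uczak'' with a citation to \cite{Luczak}, so there is no in-paper argument to match yours against. Judged on its own, your reproof is correct and is essentially the standard argument for {\L}uczak-type statements, executed with exactly the tools the paper itself uses elsewhere: the upper bound for the limsup set comes from the Wang--Wu theorem (your computation that $\psi(n)=c^{b^{n}}$ puts one in the $B=\infty$ regime with doubly-logarithmic parameter $\log b$ is right), and the lower bound for the smaller ``eventually'' set comes from the Cantor subset $\{m_{n}\le a_{n}(x)\le 2m_{n}\ \text{for all }n\}$ with the uniform mass distribution and Proposition~\ref{MD}, precisely the scheme of Section~4. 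You also correctly isolate the binding inequality $\mu(I_{n-1})\le C\left(|I_{n-1}|/m_{n}\right)^{s}$ and explain why the relevant scale is the cluster length $|I_{n-1}|/m_{n}$ rather than the cylinder length, which is where the exponent $1/(b+1)$ (rather than the naive $1/2$) comes from. One overstatement should be repaired before this could stand as a rigorous proof: the cluster $\bigcup_{m_{n}\le a_{n}\le 2m_{n}}I_{n}$ is \emph{not} bordered inside $I_{n-1}$ by an empty gap of length $\asymp|I_{n-1}|$ on both sides --- on the side toward the convergent $p_{n-1}/q_{n-1}$ the empty gap has length only $\asymp|I_{n-1}|/m_{n}$, comparable to the cluster itself. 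That weaker gap is still sufficient (a gap comparable to the cluster length forces a ball of smaller radius to meet a single cluster, and for larger radii one counts clusters using the fact that all admissible order-$(n-1)$ cylinders have comparable lengths, as in the derivation of \eqref{gap}), but the argument should be phrased at that level of precision.
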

Recall that
\begin{equation*}
E_{f}(\psi):=\left\{x\in [0, 1): \sqrt[n]{a_{f(n)}(x)a_{2f(n)}(x)\cdots a_{nf(n)}(x)}\geq \psi(n) \ {\rm for \ infinitely \ many} \ n\in \N\right\},
\end{equation*}
where $$\log B=\liminf_{n\rightarrow\infty}\frac{\log\psi\left(n\right)}{dn},\textrm{ }\log b=\liminf_{n\rightarrow\infty}\frac{\log\log\psi\left(n\right)}{dn^2}.$$
The proof is split into several parts.
\begin{itemize}
\item For the case $B=1$, we observe that
$$
E_f\left(\psi\right)\supseteq\Big\{x\in\left[0,1\right):a_{nf(n)}\left(x\right)\ge \psi\left(n\right)^n\textrm{ for infinitely many }n\in\mathbb{N}\Big\}.
$$
By Wang-Wu theorem \cite{WaWu08}, we obtain that $\dim_\HH E_f\left(\psi\right)=1$.

\item For the case $1<B<\infty$, for any $0<\varepsilon<B-1$ we have $\psi\left(n\right)>\left(B-\varepsilon\right)^{dn}$ when $n$ is large enough. Thus, we have
$$
E_f\left(\psi\right)\subseteq\Big\{x\in\left[0,1\right):\prod_{i=1}^{n}a_{if(n)}\left(x\right)\ge\left(B-\varepsilon\right)^{dn^{2}}\textrm{ for infinitely many }n\in\mathbb{N}\Big\}.
$$
On the other hand, we can choose a largely sparse integer sequence $\{n_j\}_{j\ge 1}$ such that for all $j\ge 1$,
$$\psi(n_j)<\left(B+\varepsilon\right)^{dn_j}.$$ Choose large enough integers $N,M$, let $s<s_{B+\varepsilon}(M,N)$, similar as we discussed in Section 4, the difference in this case is $f(n_j)-1$ may not be a multiple of $f(N)$. So we define $\hat{r}_j$ such that
$$\hat{r}_jf(N)\le f(n_j)-1<(\hat{r}_j+1)f(N).$$
For any $1\le i\le n_j-1$, let
$$o_j^i:=if(n_j)+\hat{r}_jf(N),$$ one can see that $0\le (i+1)f(n_j)-o_j^i<f(N)$. Let $\hat{v}_j$ be the largest integer such that $n_{j}f(n_{j})+\hat{v}_{j}f(N)\le f(n_{j+1})-1$ and denote $\hat{m}_{j}=n_{j}f(n_{j})+v_{j}f(N)$. Then we define a subset of $E_f(\psi)$,
\begin{align*}
\hat{F}:=\Big\{x\in\left[0,1\right)&: A_{i}(n_j)^{n_j}\le a_{if(n_j)}(x)\le 2A_{i}(n_j)^{n_j}, 1\le i\le n_j,\\
&\phantom{=\;\;}a_n(x)=2 \textrm{ for all } \hat{m}_j<n<f(n_{j+1}),j\ge 1,\\
&\phantom{=\;\;}\textrm{ and } o_j^i<n<(i+1)f(n_j),1\le i<n_j;1\le a_{n}\left(x\right)\le M\textrm{ for other cases}\Big\}.
\end{align*}
A rigorous proof of dimension of this set can be carried out with no more changes, one can conclude that
$$
s_{B+\varepsilon}\le\dim_\HH E_f\left(\psi\right)\le s_{B-\varepsilon}.
$$
Letting $\varepsilon\rightarrow 0$, we obtain $\dim_\HH E_f\left(\psi\right)=s_{B}$.
\item For the case $B=\infty$, we consider the following three cases.
\begin{enumerate}[(i)]
\item For $1<b<\infty$, for any $0<\varepsilon<b-1$ we have $\psi\left(n\right)>e^{\left(b-\varepsilon\right)^{(1-\varepsilon) nf(n)}}$ for sufficiently large $n$. We observe that $$\prod_{i=1}^{n}a_{if(n)}\left(x\right)\ge e^{n(b-\varepsilon)^{(1-\varepsilon)nf(n)}}$$ implies that there is an $1\le i\le n$ such that $$a_{if(n)}\left(x\right)\ge e^{(b-\varepsilon)^{(1-\varepsilon)if(n)}}$$ for any $x\in E_f\left(\psi\right)$. Thus,
$$E_f\left(\psi\right)\subseteq\Big\{x\in\left[0,1\right):a_{n}\left(x\right)\ge e^{\left(b-\varepsilon\right)^{(1-\varepsilon)n}}\textrm{ for infinitely many }n\in\mathbb{N}\Big\}.$$
Therefore, by Lemma \ref{41} we have
$$\dim_\HH E_f\left(\psi\right)\le\frac{1}{1+(b-\varepsilon)^{1-\varepsilon}}.$$
Since $\varepsilon>0$ is arbitrary, we have
$$\dim_\HH E_f\left(\psi\right)\le\frac{1}{1+b}.$$
As for the lower bound, we observe that for any $\varepsilon>0$, we have $\psi\left(n\right)<e^{\left(b+\varepsilon\right)^{(1+\varepsilon)nf(n)}}$ for infinitely many $n$. Let
$$\mathcal{G}_{b}=\Big\{n:\psi\left(n\right)<e^{\left(b+\varepsilon\right)^{(1+\varepsilon)nf(n)}}\Big\}.$$
Then we have
\begin{align*}
E_f\left(\psi\right)&\supseteq\Big\{x\in\left[0,1\right):\prod_{i=1}^{n}a_{if(n)}\left(x\right)\ge e^{\left(b+2\varepsilon\right)^{(1+\varepsilon)nf(n)}}\textrm{ for infinitely many }n\in\mathcal{G}_{b}\Big\}\\
&\supseteq\Big\{x\in\left[0,1\right):a_{if(n)}\left(x\right)\ge e^{\left(b+2\varepsilon\right)^{(1+\varepsilon)if(n)}},1\le i\le n,\textrm{ for infinitely many }n\in\mathcal{G}_{b}\Big\}\\
&\supseteq\Big\{x\in\left[0,1\right):a_{n}\left(x\right)\ge e^{\left(b+2\varepsilon\right)^{(1+\varepsilon)n}},\textrm{ for all }n\in\mathbb{N}\Big\}.
\end{align*}
Thus, by Lemma \ref{41} and since $\varepsilon>0$ is arbitrary, we have
$$\dim_\HH E_f\left(\psi\right)=\frac{1}{1+b}.$$
\item For $b=1$, the proof of the lower bound is the same as in the case $1<b<\infty$, take $b=1$ implies that $\dim_\HH E_f\left(\psi\right)\ge 1/2$. As for the upper bound, using the fact that $B=\infty$, we have
$$\frac{\log\psi\left(n\right)}{dn}>\log B_{1}$$
for a sufficiently large $B_{1}>0$. This implies $\psi\left(n\right)>B_{1}^{dn}$ holds for sufficiently large $n$. Then, we have
$$E_f\left(\psi\right)\subseteq\Big\{x\in\left[0,1\right):\prod_{i=1}^{n}a_{if(n)}\left(x\right)\ge B_{1}^{dn^{2}}\textrm{ for infinitely many }n\in\mathbb{N}\Big\}.$$
Thus,
$$\dim_\HH E_f\left(\psi\right)\le s_{B_{1}}.$$
Letting $B_{1}\rightarrow\infty$, we obtain the desired result.
\item For $b=\infty$, for a sufficiently large $b_1$ such that $0<b_{1}<b$, by the first part in (i), we have
$$\dim_\HH E\left(\psi\right)\le\frac{1}{b_1+1}\rightarrow 0$$
as $b_1\rightarrow\infty$.
\end{enumerate}
We have covered all potential cases, thus the theorem is proven.
\end{itemize}


\begin{thebibliography}{10}

\bibitem{Aaronson}
J.~Aaronson.
\newblock On the ergodic theory of non-integrable functions and infinite
  measure spaces.
\newblock {\em Israel J. Math.}, 27(2):163--173, 1977.

\bibitem{BBH2}
A.~Bakhtawar, P.~Bos, and M.~Hussain.
\newblock Hausdorff dimension of an exceptional set in the theory of continued
  fractions.
\newblock {\em Nonlinearity}, 33(6):2615--2639, 2020.

\bibitem{BHS}
P.~Bos, M.~Hussain, and D.~Simmons.
\newblock The generalised {H}ausdorff measure of sets of {D}irichlet
  non-improvable numbers.
\newblock {\em Proc. Amer. Math. Soc.}, 151(5):1823--1838, 2023.

\bibitem{DiamondVaaler}
H.~G. Diamond and J.~D. Vaaler.
\newblock Estimates for partial sums of continued fraction partial quotients.
\newblock {\em Pacific J. Math.}, 122(1):73--82, 1986.

\bibitem{Falconer_book2020}
K.~Falconer.
\newblock {\em Fractal geometry}.
\newblock John Wiley \& Sons, Ltd., Chichester, third edition, 2020.
\newblock Mathematical foundations and applications.

\bibitem{HaMaUr}
P.~Hanus, R.~D. Mauldin, and M.~Urba\'{n}ski.
\newblock Thermodynamic formalism and multifractal analysis of conformal
  infinite iterated function systems.
\newblock {\em Acta Math. Hungar.}, 96(1-2):27--98, 2002.

\bibitem{HHY}
H.~Hu, M.~Hussain, and Y.~Yu.
\newblock Limit theorems for sums of products of consecutive partial quotients
  of continued fractions.
\newblock {\em Nonlinearity}, 34(12):8143--8173, 2021.

\bibitem{HuWuXu}
L.~Huang, J.~Wu, and J.~Xu.
\newblock Metric properties of the product of consecutive partial quotients in
  continued fractions.
\newblock {\em Israel J. Math.}, 238(2):901--943, 2020.

\bibitem{HKWW}
M.~Hussain, D.~Kleinbock, N.~Wadleigh, and B.-W. Wang.
\newblock Hausdorff measure of sets of {D}irichlet non-improvable numbers.
\newblock {\em Mathematika}, 64(2):502--518, 2018.

\bibitem{HLS3}
M.~Hussain and N.~Shulga.
\newblock Metrical properties of finite product of partial quotients in
  arithmetic progressions.
\newblock {\em In preparation}, 2023.

\bibitem{HLS}
M.~Hussain, B.~Li, and N.~Shulga.
\newblock A {H}ausdorff dimension analysis of sets with the product of
  consecutive vs single partial quotients in continued fractions.
\newblock {Accepted: Discrete Contin. Dyn. Syst. Preprint \em arXiv:2208.09091}, 2022.

\bibitem{Khinchin_book}
A.~Khinchin.
\newblock {\em Continued fractions}.
\newblock The University of Chicago Press, Chicago, Ill.-London, 1964.

\bibitem{Khintchine1935}
A.~Khintchine.
\newblock Metrische {K}ettenbruchprobleme.
\newblock {\em Compositio Math.}, 1:361--382, 1935.

\bibitem{KSY}
D.~Kleinbock, A.~Strombergsson, and S.~Yu.
\newblock A measure estimate in geometry of numbers and improvements to
  dirichlet's theorem.
\newblock {\em Proc. Lond. Math. Soc.} (3) 125 (2022), no. 4, 778--824.

\bibitem{KleinbockWadleigh}
D.~Kleinbock and N.~Wadleigh.
\newblock A zero-one law for improvements to {D}irichlet's theorem.
\newblock {\em Proc. Amer. Math. Soc.}, 146(5):1833--1844, 2018.

\bibitem{LiWaWuXu}
B.~Li, B.-W. Wang, J.~Wu, and J.~Xu.
\newblock The shrinking target problem in the dynamical system of continued
  fractions.
\newblock {\em Proc. Lond. Math. Soc. (3)}, 108(1):159--186, 2014.

\bibitem{Luczak}
T.~{\L}uczak.
\newblock On the fractional dimension of sets of continued fractions.
\newblock {\em Mathematika}, 44(1):50--53, 1997.

\bibitem{MaUr96}
R.~D. Mauldin and M.~Urba{\'n}ski.
\newblock Dimensions and measures in infinite iterated function systems.
\newblock {\em Proc. London Math. Soc. (3)}, 73(1):105--154, 1996.

\bibitem{MaUr99}
R.~D. Mauldin and M.~Urba{\'n}ski.
\newblock Conformal iterated function systems with applications to the geometry
  of continued fractions.
\newblock {\em Trans. Amer. Math. Soc.}, 351(12):4995--5025, 1999.

\bibitem{MaUr03}
R.~D. Mauldin and M.~Urba{\'n}ski.
\newblock {\em Graph directed Markov systems: geometry and dynamics of limit
  sets}.
\newblock Cambridge University Press, Cambridge, 2003.

\bibitem{Philipp88}
W.~Philipp.
\newblock Limit theorems for sums of partial quotients of continued fractions.
\newblock {\em Monatsh. Math.}, 105(3):195--206, 1988.

\bibitem{WaWu08}
B.-W. Wang and J.~Wu.
\newblock Hausdorff dimension of certain sets arising in continued fraction
  expansions.
\newblock {\em Adv. Math.}, 218(5):1319--1339, 2008.

\end{thebibliography}
\end{document}